\theoremstyle{plain}
\newtheorem{theorem}{Theorem}[section]
\newtheorem{lemma}[theorem]{Lemma}
\newtheorem{corollary}[theorem]{Corollary}
\newtheorem{proposition}[theorem]{Proposition}
\newtheorem{remark}[theorem]{Remark}
\theoremstyle{definition}
\newtheorem{definition}[theorem]{Definition}
\newtheorem{example}[theorem]{Example}
\newtheorem{conjecture}[theorem]{Conjecture}
\newtheorem{remark-theorem}[theorem]{Remark-Theorem}
\newtheorem*{observation}{Observation}
\newcommand{\kah}{K\"{a}hler }
\newcommand{\idd}{i\partial\overline{\partial}}
\newcommand{\dbar}{\overline{\partial}}
\newcommand{\cal}[1]{\mathcal{#1}}
\newcommand{\bb}[1]{\mathbb{#1}}
\newcommand{\scr}[1]{\mathscr{#1}}
\newcommand{\rom}[1]{\mathrm{#1}}
\newcommand{\fra}[1]{\mathfrak{#1}}
\newcommand{\xs}{X_{sing}}
\newcommand{\reg}{X_{reg}}
\newcommand{\ogr}{\omega_X^{GR}}
\newcommand{\tx}{\widetilde{X}}
\newcommand{\tl}[1]{\widetilde{#1}}
\newcommand{\iO}[1]{i\Theta_{#1}}
\subjclass[2020]{32Q40, 32F32, 14F18, 32L10}
\keywords{ 
embedding, weakly pseudoconvex, positivity, singular Hermitian metrics.}
\begin{document}
\title
[Global Embeddings and Approximation theorems] 
{Global Embeddings \\ of Weakly Pseudoconvex Complex Spaces \\ and Refined Runge-type Approximation Theorems}
\author{Yuta Watanabe}
\address{Department of Mathematics, Faculty of Science and Engineering, Chuo University.
1-13-27 Kasuga, Bunkyo-ku, Tokyo 112-8551, Japan}
\email{{\tt wyuta.math@gmail.com}, {\tt wyuta@math.chuo-u.ac.jp}}

\begin{abstract}
    Runge-type approximation principles for holomorphic sections of adjoint line bundles are known only for weakly pseudoconvex manifolds. 
    In this paper, we establish a refined form of such principles adapted to the setting of complex spaces and show that they yield global holomorphically embeddings for the regular locus of weakly pseudoconvex complex spaces.
    The key point is the construction of sequences of singular Hermitian metrics after a canonical resolution of singularities, together with a control of multiplier ideal sheaves via the strong openness property. 
    This refined Runge-type approximation principle enables the globalization of local sections even when singularities persist at infinity. As an application, we solve the Union problem for weakly pseudoconvex complex manifolds.
\end{abstract}


\maketitle



\vspace{-7mm}

\section{Introduction}

In complex geometry, the notions of positivity and embedding theorems have played a very important role. 
Indeed, when a complex manifold is compact, Kodaira's embedding theorem \cite{Kod54} implies that being projective is characterized by the existence of a Hodge metric, 
and, with respect to positivity, the notions of \textit{ample} and \textit{positive} coincide.
After that, Kodaira's result was extended to compact complex spaces by Grauert \cite{Gra62}, and to (non-compact) weakly pseudoconvex manifolds 
by Takayama \cite{Tak98}. 
Weakly pseudoconvexity includes both compact and Stein cases and forms the broadest and highly significant class among the complex-geometric objects that can be treated.
In fact, every complex Lie group is always weakly pseudoconvex (see \cite{Kaz73}).
Here, a function $\varPsi:X\longrightarrow[-\infty,+\infty)$ on a complex space $X$ is \textit{exhaustion} if all sublevel sets $X_c:=\{x\in X\mid\varPsi(x)<c\}$, $\forall\,c\in\bb{R}$, are relatively compact. 
A complex space is said to be \textit{weakly} \textit{pseudoconvex} if there exists a smooth exhaustion plurisubharmonic function.

In this paper, embeddings of weakly pseudoconvex complex spaces equipped with a positive line bundle are studied.
In this setting, for each sublevel set $X_c$ with singularities, embeddings exist due to relative compactness, and it is known that ampleness coincides with positivity on $X_c$ \cite{Fuj75}. 
However, globally, non-compactness prevents the positive line bundle $L\longrightarrow X$ from being necessarily ample; indeed, Ohsawa provided a counterexample where this fails \cite{Ohs79}. 
In general, when there are no singularities, as shown by Takayama \cite[Theorem\,1.2]{Tak98}, 
for any integer $m\geq n(n+1)/2$, the adjoint bundle $K_X\otimes L^{\otimes m}$ becomes ample and yields a global embedding, where $n=\rom{dim}\,X$. 

Recently, it has been shown that, for a weakly pseudoconvex manifold $X$ equipped with a holomorphic line bundle $L \longrightarrow X$ carrying a singular positive Hermitian metric $h$, 
global embeddings (see \cite[Theorem\,1.2]{Wat24}) can be obtained via the adjoint bundle $K_X\otimes L^{\otimes m}$ while avoiding the essential singular locus of $h$, $E_{+}(h) := \bigcup_{k \in \mathbb{N}} V(\scr{I}(h^k))=\{x\in X\mid\nu(-\log h,x)>0\}$ (see \cite[Proposition\,3.11]{Wat24}). 
Furthermore, the adjoint bundle is big (see \cite[Theorem\,1.7]{Wat24}). 
However, the integer $m$ cannot, in general, be taken optimally as $m \geq n(n+1)/2$ as in Takayama's result; rather, there exist a constant $C\geq1$ and a smooth Hermitian metric $h_0$ satisfying the necessary conditions such that $m \geq C\cdot n(n+1)/2$ is required. 
We present the following main results.

\begin{theorem}\label{Theorem: embedding theorem}
    Let $X$ be a non-compact weakly pseudoconvex complex space of pure dimension $n$ 
    and $L\longrightarrow X$ be a holomorphic line bundle.
    If $L$ is positive, then the regular locus $\reg$ of $X$ can be holomorphically embedded into $\bb{P}^{2n+1}$. 
    
    Furthermore, let $\pi:\tx\longrightarrow X$ be a canonical desingularization as in Theorem \ref{Theorem: canonical desingularization} and $E:=\pi^{-1}(\xs)$ be a $\pi$-exceptional divisor, 
    then the adjoint bundle $K_{\tx}\otimes\pi^*L^{\otimes m}$ is ample over $\tx\setminus E$ for any $m\geq n(n+1)/2$. 
    In fact, $\tx\setminus E\cong\reg$ is holomorphically embeddable into $\bb{P}^{2n+1}$ by a linear subsystem of $|(K_{\tx}\otimes \pi^*L^{\otimes m})^{\otimes (n+2)}|$ for $m\geq n(n+1)/2$.
\end{theorem}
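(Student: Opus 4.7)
The plan is to reduce to the smooth weakly pseudoconvex setting via a canonical desingularization, apply the refined approximation theorems of the paper to achieve Takayama's sharp exponent globally for the adjoint line bundle $K_{\tx}\otimes\pi^*L^{\otimes m}$, and then cut the embedding dimension down to $2n+1$ by a Bertini/projection argument.

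First I would take the canonical desingularization $\pi:\tx\to X$, which is a biholomorphism over $\reg$ with exceptional divisor $E=\pi^{-1}(\xs)$; pulling back a smooth exhaustion plurisubharmonic function gives that $\tx$ is again weakly pseudoconvex, and one has $\tx\setminus E\cong\reg$. A smooth positive Hermitian metric $h_0$ on $L$ pulls back to a smooth metric $\pi^*h_0$ on $\pi^*L$ with semi-positive curvature on $\tx$ and strictly positive curvature on $\tx\setminus E$. Twisting $\pi^*h_0$ by a suitable singular weight built from local defining sections of the irreducible components of $E$ produces a singular Hermitian metric $h$ on $\pi^*L$ with strictly positive curvature current whose essential singular locus satisfies $E_+(h)\subseteq E$; in particular $\scr{I}(h^k)$ is trivial on $\tx\setminus E$ for every $k$.

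The crucial step is to upgrade this into ampleness of $A_m:=K_{\tx}\otimes\pi^*L^{\otimes m}$ on $\tx\setminus E$ for every integer $m\geq n(n+1)/2$. On each sublevel set $\tx_c$, relative compactness allows the sharp Kodaira--Takayama argument of \cite{Tak98}, combined with Nadel vanishing applied to $(\pi^*L,h)$, to produce sections of $A_m|_{\tx_c}$ generating $1$-jets at any prescribed point of $\tx_c\setminus E$. The refined approximation theorems of the paper then lift these local sections to honest global sections in $H^0(\tx,A_m)$ while preserving base-point freeness, point separation, and tangent separation off $E$; this is the step where the refinement of the approximation is essential, because a naive $L^2$-approximation would introduce geometric constants that degrade the exponent. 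Taking $A_m^{\otimes(n+2)}$ yields enough global sections to embed $\tx\setminus E$ into a large projective space, and then projecting generically from points outside the secant variety (dimension $\leq 2n+1$) and the tangent variety (dimension $\leq 2n$) of the image cuts the target down to $\bb{P}^{2n+1}$, producing the desired linear subsystem of $|A_m^{\otimes(n+2)}|$.

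The main obstacle is sharpening the exponent from the non-optimal $C\cdot n(n+1)/2$ appearing in \cite{Wat24} to Takayama's exact threshold $n(n+1)/2$. Standard H\"ormander-type $\dbar$-approximation introduces a factor depending on the Hermitian geometry of $(\tx,\omega)$, which forces an auxiliary positive twist and loses the sharp constant; the refined approximation theorems are engineered precisely to avoid that loss. The delicate point is to verify compatibility of the refined approximation with the singular metric $h$ constructed above — in particular that the multiplier ideals $\scr{I}(h^k)$ and the essential singular locus $E_+(h)$ behave well under the approximation limit — so that the global sections obtained on $\tx$ retain the $1$-jet separating properties established on each $\tx_c$.
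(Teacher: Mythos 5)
Your plan has a genuine gap, and it is precisely the ``main difficulty'' the paper singles out. You claim that twisting $\pi^*h_0$ by a singular weight built from local defining sections of the components of $E$ produces a \emph{globally defined} singular Hermitian metric $h$ on $\pi^*L\to\tx$ with strictly positive curvature current and $E_+(h)\subseteq E$. But the only tool for producing such twists with negativity along $E$ is the Negativity Lemma (Lemma \ref{Lemma: Negativity lemma}), and it yields a metric on $\cal{O}_{\tx}(-E_V)|_{\tl{V}}$ only for $\tl{V}=\pi^{-1}(V)$ with $V\Subset X$ relatively compact; the effective divisor $E_V$, its coefficients $b_j$, and the set of relevant components $J_V$ all depend on $V$, and there is no reason these choices can be made coherently as $V$ exhausts $X$. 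The paper states explicitly after the Observation that ``it is unknown whether we can construct a singular positive Hermitian metric on $\pi^*L\longrightarrow\tx$ that is defined globally on $\tx$,'' and Example \ref{Example: more singular example} shows a global metric exists only under the extra assumption $\partial X_{c_j}\cap\xs=\emptyset$. Your construction silently assumes what is unknown in general, and if it worked, the theorem would follow directly from \cite[Theorem 1.2]{Wat24} without any refined approximation theorem at all.

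This also inverts the role of the refined approximation theorem. In your account the refinement is there to avoid $\dbar$-estimate constants that would degrade the exponent; in the paper the refinement is forced because no single global metric exists. The correct package is: the pair of exhaustion positivity theorems \ref{Theorem: exh sing positivity on tlX} and \ref{Theorem: exh sing positivity on tlX twisted by psef} (the latter using strong openness to control $\scr{I}(\cal{H}_x\otimes h_{V_K}^{\ell-1}\otimes\tl{h}_j^{p+1})$ as you vary the sublevel), fed into Proposition \ref{Proposition: approximation thm with L2-norms and semi-norms} and Theorem \ref{Theorem: ext approximation theorem} to produce density of restriction maps, then into Theorems \ref{Theorem: global points separation} and \ref{Theorem: global embedding}. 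Relatedly, your explanation of the constant $C$ in \cite{Wat24} is off: $C$ there compares the given singular metric to an auxiliary smooth one, and the reason $C=1$ is attainable here is that on each sublevel the constructed metric $\tl{h}_c$ is smooth away from $E$, not that the approximation avoids H\"ormander constants. Your concluding Whitney/generic-projection step to cut down to $\bb{P}^{2n+1}$ does match the paper's citation of \cite[Chapter V]{Hor90}.
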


\begin{theorem}\label{Theorem: bigness theorem and Vol>0}
    Let $X$ be a non-compact weakly pseudoconvex complex space of pure dimension $n$ and $\pi:\tx\longrightarrow X$ be a canonical desingularization as in Theorem \ref{Theorem: canonical desingularization}. 
    Let $L\longrightarrow X$ be a holomorphic line bundle. If $L$ is positive, then the adjoint bundle $K_{\tx}\otimes\pi^*L^{\otimes m}$ is big for any $m\geq n(n+1)/2$. 

    In particular, if $m\geq n(n-1)/2+1$ then the adjoint bundle $K_{\tx}\otimes\pi^*L^{\otimes m}$ is generated by global sections on $\tx\setminus E$.
    Furthermore, if there exists $c>\inf_X\varPsi$ such that $\xs\bigcap X_c=\emptyset$, then we have the inequality $\rom{Vol}_{\tx}(K_{\tx}\otimes\pi^*L^{\otimes m+1})>0$, where $m\geq n(n-1)/2+1$.
\end{theorem}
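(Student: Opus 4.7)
The plan is to establish the three assertions in order, using Theorem \ref{Theorem: embedding theorem} together with the refined approximation theorems developed earlier in the paper as the two principal inputs.

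For the bigness assertion when $m \geq n(n+1)/2$, I would exploit Theorem \ref{Theorem: embedding theorem} directly. That theorem exhibits a holomorphic embedding $\tx\setminus E \hookrightarrow \bb{P}^{2n+1}$ realised by a linear subsystem of $|(K_{\tx}\otimes\pi^*L^{\otimes m})^{\otimes(n+2)}|$. Since $\tx\setminus E$ has dimension $n$, the restriction to any sublevel set $\tx_c:=\pi^{-1}(X_c)$ gives a birational map onto an $n$-dimensional image, and the refined approximation theorem promotes sections defined on sublevel sets to genuine global sections on $\tx$ while preserving linear independence. Counting sections then yields a growth of the form $h^0(\tx,(K_{\tx}\otimes\pi^*L^{\otimes m})^{\otimes k})\gtrsim k^n$, hence bigness in the non-compact sense used in the paper.

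For the global generation assertion on $\tx\setminus E$ with $m\geq n(n-1)/2+1$, I would fix $x_0\in \tx\setminus E$ and follow the Angehrn--Siu--Takayama strategy: construct a singular Hermitian metric on $\pi^*L^{\otimes m}$ with an isolated logarithmic pole of weight $n$ at $x_0$ and strictly positive curvature elsewhere, obtained by adding a cutoff of $n\log|z-x_0|^2$ to $m$ times a smooth positive weight for $\pi^*L$. The numerical threshold $m\geq n(n-1)/2+1$ is precisely what keeps the curvature positive while forcing the multiplier ideal at $x_0$ to equal $\fra{m}_{x_0}$; Nadel-type $L^2$ solution of $\dbar$ on a sublevel set $\tx_c$ containing $x_0$ then yields a section of $K_{\tx}\otimes\pi^*L^{\otimes m}$ over $\tx_c$ that does not vanish at $x_0$, and the refined approximation theorem globalises it. For the volume inequality, the hypothesis $\xs\cap X_c=\emptyset$ makes $X_c$ a relatively compact smooth open subset of $\reg$, which under $\pi$ identifies with an open subset of $\tx\setminus E$. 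On $X_c$ the bundle $K_{\tx}\otimes\pi^*L^{\otimes m}$ is globally generated by the previous part, and tensoring with the positive line bundle $\pi^*L$ adds strict positivity. On the relatively compact smooth manifold $X_c$, a standard Demailly--Siu--Takayama dimension estimate then gives $h^0(X_c,(K_{\tx}\otimes\pi^*L^{\otimes(m+1)})^{\otimes k})\geq Ck^n$ for some $C>0$ and all large $k$, and a final application of the refined approximation theorem lifts these sections to $\tx$ while preserving leading-order growth, yielding $\rom{Vol}_{\tx}(K_{\tx}\otimes\pi^*L^{\otimes(m+1)})>0$.

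The main obstacle I anticipate is this last lifting step: matching the $Ck^n$ growth on $X_c$ with global sections on $\tx$ via the approximation theorem. The approximation error must be controlled uniformly enough in $k$ that it does not erode the asymptotic section count, which requires a careful diagonal argument along the exhaustion together with $L^2$ estimates whose cutoffs are chosen so that the error decays faster than $k^{-n}$. This is exactly the point at which the \emph{refinement} in the approximation theorem, over earlier qualitative extension results, plays the essential role.
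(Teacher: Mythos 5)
The paper disposes of this theorem in one line: it follows from Theorem~\ref{Theorem: global points separation} (global points separation), Theorem~\ref{Theorem: global embedding} (global embedding), and the external reference \cite[Theorem~8.6]{Wat24}. Your proposal is broadly in the same spirit, but there is a conceptual slip in the first step that is worth flagging, and your worry at the end is actually a non-issue.

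For the bigness claim your argument takes an unnecessary and in fact logically shaky detour. You begin correctly by invoking the embedding from Theorem~\ref{Theorem: embedding theorem} (i.e.\ Theorem~\ref{Theorem: global embedding}), which produces a \emph{finite-dimensional} linear subsystem $V\subset H^0(\tx,(K_{\tx}\otimes\pi^*L^{\otimes m})^{\otimes(n+2)})$ whose Kodaira map restricts to an embedding of $\tx\setminus E$. At that point you are done: $\dim\overline{\varPhi_V(\tx)}\geq n$, hence $\varrho_{n+2}=n$ and $\kappa(K_{\tx}\otimes\pi^*L^{\otimes m})=n$, which is the paper's definition of bigness. Instead you pivot to restricting to sublevel sets, re-applying the approximation theorem, and counting sections to get $h^0\gtrsim k^n$. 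That growth estimate establishes $\rom{Vol}_{\tx}>0$, not $\kappa=n$; the remark in the paper that these are equivalent is stated explicitly only for compact manifolds, and the theorem's hypotheses are precisely the non-compact case. So the ``hence bigness in the non-compact sense used in the paper'' in your first paragraph does not follow from what precedes it. The direct argument via the Kodaira map is both shorter and correct.

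The global generation part is on track and matches the mechanism inside Theorem~\ref{Theorem: global points separation}: construct, on a sufficiently large sublevel set, a singular metric with an isolated log pole at $x_0$ of the right order (the paper does this by citing \cite[Theorem~7.5]{Wat24} applied to the metric from Lemma~\ref{Lemma: exh sing posi tensor m}), then globalize by Theorem~\ref{Theorem: ext approximation theorem}. For the volume inequality, the paper simply delegates to \cite[Theorem~8.6]{Wat24}, whereas you sketch a derivation on $X_c$ followed by a lift. Your closing concern — that the approximation error must be controlled ``uniformly in $k$'' and decay faster than $k^{-n}$ — is misplaced: the approximation theorem is applied separately for each fixed $k$, and density of the restriction map preserves linear independence of any finite family of sections after a small perturbation, so $\dim H^0(\tx,\cal{L}^{\otimes k})\geq\dim H^0(\tx_c,\cal{L}^{\otimes k}\otimes\scr{J}_k)$ for each $k$ without any delicate rate control. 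The genuine technical point you should instead worry about is verifying that, for each $k$, the twist $K_{\tx}^{\otimes(k-1)}\otimes\pi^*L^{\otimes k(m+1)}$ is exhaustion singular-positive with an ideal sheaf trivial on $\tx_c$ (so the approximation theorem actually applies to $\cal{L}^{\otimes k}=K_{\tx}\otimes(\cdot)$); this is where the hypothesis $\xs\cap X_c=\emptyset$ and the semi-positive metric on $K_{\tx}\otimes\pi^*L^{\otimes m}$ from Theorem~\ref{Theorem: global embedding} enter, and this is the content that \cite[Theorem~8.6]{Wat24} packages.
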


Throughout this paper, \textit{canonical desingularization} is understood in the sense of Theorem \ref{Theorem: canonical desingularization}, and a complex space is always assumed to be pure dimension, without necessarily being reduced.

These results provide a global extension of the results of Grauert \cite{Gra62} and Fujiki \cite{Fuj75} to weakly pseudoconvex complex spaces, which may be non-compact, and also generalize Takayama's result \cite{Tak98} to complex spaces with singularities.

\begin{observation}
    On each sublevel set $X_c$, we take a resolution of singularities $\pi : \tx_c \longrightarrow X_c$. 
    Then, by using the Negativity Lemma (see Lemma \ref{Lemma: Negativity lemma}) and the relative compactness of $X_c$, we can construct on $\tx_c$ a singular positive Hermitian metric $\tl{h}_c$ on $\pi_c^*L|_{\tx_c}$ that becomes smooth on $\tx_c\setminus E_c$, 
    where $E_c$ denotes the $\pi$-exceptional divisor. Consequently, on each sublevel set $X_c$, it follows from \cite{Wat24} that $\tx_c\setminus E_c\cong X_c\setminus\xs$ admits an embedding via the adjoint bundle $K_{\tx_c}\otimes \pi^*L^{\otimes m}$. 
    Here, Theorem \ref{Theorem: embedding theorem} and \ref{Theorem: bigness theorem and Vol>0} correspond to the case $C = 1$ in \cite[Theorem\,1.2]{Wat24}, which is optimal.
\end{observation}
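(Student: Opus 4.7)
The plan is to adapt Takayama's jet-separation argument \cite{Tak98} to the desingularization $\pi:\tx\longrightarrow X$, with the refined approximation theorems established earlier in this paper providing the bridge between sections constructed on relatively compact sublevel sets and global sections on $\tx$. First, fix a smooth Hermitian metric $h_L$ on $L$ whose curvature dominates a \kah form $\omega$ on $X$. The pullback $\pi^*h_L$ is smooth and strictly positive on $\tx\setminus E$, but only semi-positive along $E$. Using the Negativity Lemma together with the relative compactness of sublevel sets, I would modify $\pi^*h_L$ in a neighborhood of $E$ to obtain a singular positive metric $\tl h$ on $\pi^*L$ that is smooth on $\tx\setminus E$, has logarithmic poles along $E$, and whose curvature current dominates a \kah form on $\tx$, as indicated in the Observation.

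For global generation on $\tx\setminus E$ with $m\geq n(n-1)/2+1$, at any $y\in\tx\setminus E$ I would apply an Ohsawa--Takegoshi type extension on a sublevel set $\tx_c$ containing $y$, using the weight $\tl h^{\otimes m}$ combined with an auxiliary singular weight at $y$, to produce a section of $(K_{\tx_c}\otimes\pi^*L^{\otimes m})|_{\tx_c}$ not vanishing at $y$. The refined approximation theorem then lifts this to a global section of $K_{\tx}\otimes\pi^*L^{\otimes m}$ with the same non-vanishing property. The bigness statement for $m\geq n(n+1)/2$ is obtained from the same scheme, replacing point separation with $n$-jet separation at $y$; the stronger numerical bound is forced by the higher order of the singular weight needed for jet separation, and letting $y$ vary over a positive-measure subset of $\tx\setminus E$ yields the asymptotic $h^0\bigl(\tx,(K_{\tx}\otimes\pi^*L^{\otimes m})^{\otimes k}\bigr)\gtrsim k^n$ characterizing bigness.

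For the volume inequality under the hypothesis $\xs\cap X_c=\emptyset$, the map $\pi$ is an isomorphism over $X_c$, so $U:=\pi^{-1}(X_c)$ is an open subset of $\tx\setminus E$ on which $K_{\tx}\otimes\pi^*L^{\otimes m}$ is globally generated by the previous step. Twisting by one additional factor of $\pi^*L$, which carries the genuine smooth strictly positive metric $\pi^*h_L$ on $U$, produces a line bundle that is globally generated on $U$ with a strictly positive curvature contribution there; a holomorphic Morse inequality adapted to singular Hermitian metrics then yields $\rom{Vol}_{\tx}(K_{\tx}\otimes\pi^*L^{\otimes m+1})>0$.

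The principal obstacle is to ensure that the refined approximation theorem lifts the locally constructed sections to global ones while preserving simultaneously (i) the non-vanishing or $n$-jet separation property at $y$ and (ii) the uniform $L^2$-bounds driving the $k^n$-asymptotics. This delicate interplay among the exceptional locus $E$, the singular weight $\tl h$, and the approximation procedure is precisely where the sharp constants $n(n+1)/2$ and $n(n-1)/2+1$ should emerge, matching the optimal case $C=1$ described in the Observation.
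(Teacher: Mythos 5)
The Observation you are asked to justify is a purely \emph{local} statement: for each fixed relatively compact sublevel set $X_c$, one resolves its singularities, invokes the Negativity Lemma on this relatively compact set to build a singular positive metric $\tl{h}_c$ on $\pi_c^*L|_{\tx_c}$ that is smooth off the exceptional locus $E_c$, and then cites \cite{Wat24} as a black box on the manifold $\tx_c$. In the paper this is exactly the content of Lemma \ref{Lemma: key lemma} combined with Proposition \ref{Proposition: singular positivity on tlX_c by using SOP}: the Negativity Lemma supplies the smooth metric $h^*_{E_V}$ on $\cal{O}_{\tx}(-E_V)|_{\tl{V}}$, one forms $\psi=-\log(h^*_{E_V}\otimes 1/|\sigma_V|^2)$, and then for $\varepsilon$ small the metric $\pi^*h\cdot e^{-\varepsilon\psi}$ is singular positive on $\tx_c$, smooth off $E_c$, and has trivial multiplier ideal. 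No Ohsawa--Takegoshi extension, jet separation, or approximation theorem enters; one simply applies \cite[Theorem 1.2]{Wat24} to the weakly pseudoconvex manifold $\tx_c$ equipped with $\tl{h}_c$. The final sentence of the Observation, about $C=1$ being optimal, is nothing more than a numerical comparison between the bound $m\geq n(n+1)/2$ in Theorems \ref{Theorem: embedding theorem}--\ref{Theorem: bigness theorem and Vol>0} and the bound $m\geq C\cdot n(n+1)/2$ in \cite[Theorem 1.2]{Wat24}.

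Your proposal goes badly off target in two ways. First, it attempts to prove the global Theorems \ref{Theorem: embedding theorem} and \ref{Theorem: bigness theorem and Vol>0} rather than the local Observation; the machinery you invoke (jet separation, Ohsawa--Takegoshi, Morse inequalities, the approximation theorem as a globalization device) belongs to the proofs in \S5, not here. Second, and more seriously, your first paragraph asserts that you can modify $\pi^*h_L$ near $E$ to obtain a singular positive metric $\tl h$ on $\pi^*L$ \emph{whose curvature current dominates a K\"ahler form on all of $\tx$}. This is precisely the assertion that the paper flags, in the paragraph immediately following the Observation, as the central unresolved difficulty: ``globally, it is unclear whether the Negativity Lemma can be applied, and it is unknown whether we can construct a singular positive Hermitian metric on $\pi^*L\longrightarrow\tx$ that is defined globally on $\tx$.'' The Negativity Lemma produces metrics only over preimages of relatively compact open subsets $V\Subset X$, because the divisor $E_V$ and the coefficients $b_j$ depend on $V$ and need not stabilize as $V$ exhausts $X$. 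Your proof, as written, would actually contradict the whole raison d'\^etre of the paper, which is to circumvent the unavailability of such a global metric by instead establishing exhaustion singular-positivity (Definition \ref{Definition: exhaustion positive}) and the refined approximation theorem.
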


However, globally, it is unclear whether the Negativity Lemma can be applied, and it is unknown whether we can construct a singular \textit{positive} Hermitian metric on $\pi^*L\longrightarrow\tx$ that is defined \textit{globally} on $\tx$. 
Therefore, the result of \cite[Theorem\,1.2]{Wat24} cannot be applied directly. 
The uncertainty regarding the existence of a globally defined singular positive Hermitian metric arises as the \textit{main difficulty} in obtaining a global embedding. 
Here, Theorem \ref{Theorem: bigness theorem and Vol>0} provides an example in which the adjoint bundle becomes big even though no globally defined singular positive Hermitian metric exists on $\pi^*L\longrightarrow\tx$.

We introduce the following definition and provide a refined Runge-type approximation Theorem \ref{Theorem: ext approximation theorem} for holomorphic sections to overcome this difficulty, using which we obtain a global embedding theorem \ref{Theorem: embedding theorem}. 

\begin{definition}\label{Definition: exhaustion positive}
    Let $(X,\varPsi)$ be a weakly pseudoconvex manifold and $\scr{J}$ be an ideal sheaf on $X$. 
    We say that a holomorphic line bundle $L\longrightarrow X$ is 
    \textit{exhaustion} \textit{singular}-\textit{positive} \textit{with} $\scr{J}$, 
    if there exist an increasing sequence of real numbers $\{c_j\}_{j\in\mathbb{N}}$ diverging to $+\infty$ and a sequence of singular Hermitian metrics $\{h_j\}_{j\in\bb{N}}$ such that each $h_j$ is defined on $L|_{X_{c_j}}$, is singular positive on $X_{c_j}$, and satisfies $\scr{I}(h_j)=\scr{J}$ on $X_{c_j}$.

    In particular, if each metric $h_j$ in the sequence $\{h_j\}_{j\in\bb{N}}$ is smooth, we simply say that $L$ is \textit{exhaustion positive}. 
\end{definition}

Here, the Runge-type approximation theorem regarding positive line bundles is established by Nakano, Kazama and Ohsawa, and is one of most important conclusion of the cohomology theory on weakly pseudoconvex manifolds (see \cite{Kaz73b,Ohs82,Ohs83}), 
and it plays a central role in globalizing embeddings (see \cite{Tak98}).
In the following, we present a refined Runge-type approximation regarding exhaustion singular positivity. 

\begin{theorem}\label{Theorem: ext approximation theorem}
    Let $X$ be a weakly pseudoconvex manifold, $L\longrightarrow X$ be a holomorphic line bundle and $\scr{J}$ be an ideal sheaf on $X$. 
    If $L$ is exhaustion singular-positive with $\scr{J}$, 
    then for any sublevel set $X_c$, the natural restriction map
    \begin{align*}
        \rho_c:H^0(X,K_X\otimes L)\longrightarrow H^0(X_c,K_X\otimes L\otimes\scr{J})
    \end{align*}
    has dense images with respect to 
    the topology of uniform convergence on all compact subsets in $X_c$.
\end{theorem}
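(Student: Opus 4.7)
The plan is to imitate the classical exhaustion-based approximation scheme of Nakano--Kazama--Ohsawa and Takayama \cite{Ohs82,Ohs83,Tak98}, but carried out against the sequence $\{h_j\}$ rather than a single globally defined positive metric. Without loss of generality $c$ belongs to the exhaustion sequence, say $c=c_0$, and I set $s_0:=s$. Given a compact $K\Subset X_c$ and $\varepsilon>0$, I would construct inductively sections $s_j\in H^0(X_{c_j},K_X\otimes L\otimes\scr{J})$ satisfying $\|s_{j+1}-s_j\|_{L^2(K,h_{j+1})}\le 2^{-j}\varepsilon$. The limit $\wit s:=\lim_j s_j$ then lies in $H^0(X,K_X\otimes L)$, and the usual elliptic mean-value inequality for holomorphic sections upgrades $L^2$-convergence to uniform convergence on compacta of $X_c$, yielding the desired density.

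The inductive step is the heart of the argument. Pick a smooth cutoff $\chi_j$ with $\chi_j\equiv 1$ on $X_{c_j-\delta}$ and $\mathrm{supp}\,\chi_j\Subset X_{c_j}$, and form the compactly supported $L$-valued $(n,1)$-form
\[
v_j:=\bar\partial(\chi_j\, s_j),
\]
which is supported in the shell $\{c_j-\delta\le\varPsi<c_j\}\subset X_{c_{j+1}}$. On $X_{c_{j+1}}$ I solve $\bar\partial u_j=v_j$ via H\"ormander's $L^2$ estimate against the weight $h_{j+1}\,e^{-\lambda_j(\varPsi)}$, where $\lambda_j$ is a convex increasing function vanishing near $\varPsi=c$ and rising sharply on $[c_j-\delta,+\infty)$; the singular positivity of $h_{j+1}$ together with the strict plurisubharmonicity contributed by $\lambda_j(\varPsi)$ supplies the Nakano-positive curvature needed, while $\scr{I}(h_{j+1})=\scr{J}$ on $X_{c_{j+1}}$ ensures that $s_j$, and hence $v_j$, is locally $L^2$ with respect to $h_{j+1}$. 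Choosing $\lambda_j$ large enough on $\mathrm{supp}\,\bar\partial\chi_j$ makes $\|u_j\|_{L^2(K,h_{j+1})}$ as small as desired, and $s_{j+1}:=\chi_j s_j-u_j$ is holomorphic on $X_{c_{j+1}}$. Local $L^2$-membership of $s_{j+1}$ with respect to $h_{j+1}$ then places it in $K_X\otimes L\otimes\scr{I}(h_{j+1})=K_X\otimes L\otimes\scr{J}$ by the $L^2$-characterization of Nadel's multiplier ideal, closing the induction.

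The main obstacle is that no single singular positive metric on $L$ with multiplier ideal equal to $\scr{J}$ is assumed to exist globally on $X$, so a one-step H\"ormander argument on all of $X$ is unavailable. The hypothesis of exhaustion singular-positivity with $\scr{J}$ is designed to bypass exactly this point: at each stage $j$ it supplies a metric $h_{j+1}$ that is singular positive on $X_{c_{j+1}}$ with the correct multiplier ideal, so that both H\"ormander's estimate and the Nadel $L^2$-criterion apply uniformly at every step. The delicate balance is to make the auxiliary weight $\lambda_j(\varPsi)$ grow fast enough on $\mathrm{supp}\,\bar\partial\chi_j$ to give the $2^{-j}\varepsilon$ decay, while remaining small enough on $K$ that the $L^2$-norms on $K$ (with respect to the varying $h_{j+1}$) are comparable uniformly in $j$; this is where the flexibility of composing the smooth exhaustion $\varPsi$ with convex $\lambda_j$ is used. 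Membership of the limit $\wit s$ in $\scr{J}$ on every $X_{c_j}$ then follows from lower semicontinuity of the local $L^2$-norm against $h_j$ together with the closedness of sections of $\scr{I}(h_j)$ inside locally $L^2$ sections.
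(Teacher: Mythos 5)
Your overall strategy---inductively extending sections across sublevel sets via a cutoff and a weighted H\"ormander $\bar\partial$-estimate, then summing a telescoping series---follows the same Nakano--Kazama--Ohsawa philosophy as the paper. The paper, however, delegates the single-step approximation to Proposition \ref{Proposition: approximation thm with L2-norms and semi-norms} (cited from \cite{Wat24}), whereas you re-derive it by hand, which is legitimate in spirit. But two points need attention.

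First, your telescoping estimate $\|s_{j+1}-s_j\|_{L^2(K,h_{j+1})}\le 2^{-j}\varepsilon$ is stated against the \emph{varying} metric $h_{j+1}$, and the hypothesis gives no comparison whatsoever between $h_j$ and $h_{j+1}$ on any compact set: their local weights are merely quasi-plurisubharmonic, so on $K$ the lower bounds $h_{j+1}\ge \delta_{j+1} h_0$ can degenerate, $\delta_{j+1}\to 0$, as $j\to\infty$. You acknowledge this as ``the delicate balance'' but leave it unresolved; as written, the summability of $\sum(s_{j+1}-s_j)$ in any fixed norm does not follow. The paper sidesteps the issue entirely by controlling the telescoping sum in the \emph{metric-independent} sup semi-norms $|\bullet|_K$ (defined against a single fixed smooth auxiliary metric $h_0$ and a fixed Hermitian form $\gamma$), while the $L^2$-norms $\|\bullet\|_{h_{\gamma+k}}$ are used only internally in each application of Proposition \ref{Proposition: approximation thm with L2-norms and semi-norms}. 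In your framework, at each stage you would have to absorb the unknown incomparability constant $\delta_{j+1}^{-1/2}$ into the choice of $\lambda_j$; this can in principle be done, but must be made explicit.

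Second, your closing claim that $\widetilde{s}\in H^0(X,K_X\otimes L\otimes\scr{J})$ is both stronger than what the theorem asserts (the source of $\rho_c$ is just $H^0(X,K_X\otimes L)$; only the restriction to $X_c$ is approximated inside $H^0(X_c,K_X\otimes L\otimes\scr{J})$) and, in the stated generality, not justified. The lower-semicontinuity argument you invoke needs $\sum_{k\ge j}\|s_{k+1}-s_k\|_{L^2(K,h_j)}<\infty$ for a \emph{fixed} $j$, but your induction only controls $\|s_{k+1}-s_k\|_{L^2(K,h_{k+1})}$, and without a monotonicity relation between $h_j$ and $h_{k+1}$ this does not transfer. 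The paper isolates exactly this refinement in Remark-Theorem \ref{Remark-Theorem:}, where the extra hypothesis $h_j\preccurlyeq h_{j+1}|_{X_{c_j}}$ is required to propagate the multiplier-ideal membership to the limit; it is not part of, nor a consequence of, the hypotheses of Theorem \ref{Theorem: ext approximation theorem}.
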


Furthermore, we obtain a full generalization of the Runge-type approximation theorem (see \cite[Theorem 1]{Kaz73b}) to complex spaces. 

\begin{theorem}\label{Theorem: ext approximation theorem to complex spaces}
    Let $X$ be a reduced weakly pseudoconvex complex space of pure dimension $n$ and $L\longrightarrow X$ be a holomorphic line bundle. 
    If $L$ is positive, then for any sublevel set $X_c$, the natural restriction map
    \begin{align*}
        \rho_c^\pi:H^0(X,\ogr\otimes\cal{O}_X(L))\longrightarrow H^0(X_c,\ogr\otimes\cal{O}_X(L)),
    \end{align*}
    induced by the canonical desingularization $\pi:\tx\longrightarrow X$ and the natural restriction map $\rho_{\tx,c}$ on $\tx$ associated with the adjoint bundle $K_{\tx}\otimes\pi^*L$,
    has dense images with respect to the topology of uniform convergence on all compact subsets in $X_c$.
\end{theorem}

Here, the sheaf $\ogr$ denotes the \textit{Grauert}-\textit{Riemenschneider canonical sheaf} on $X$. 
As a direct application of the refined approximation Theorem \ref{Theorem: ext approximation theorem}, an embedding is obtained assuming only the existence of an exhaustion positive line bundle (see Theorem \ref{Theorem: embedding only exhaustion positive}). 
In particular, for a toroidal group $T$, if there exists a holomorphic line bundle $L\longrightarrow T$ with a \kah form in the first Chern class $c_1(L)$, then $T$ can be holomorphically embedding into $\mathbb{P}^{\,2\dim T+1}$ (see Corollary \ref{Corollary: embedding of toroidal group}).
Such line bundles on toroidal groups (see Example \ref{Example: Tak98b Theorem}), as well as line bundles on the product of a Stein manifold $S$ and a compact manifold $X$ that are positive on each compact fiber $\{p\} \times X$ (see Examples \ref{Example: Tak98b Lemma} and \ref{Example: Stein times weakly pseudoconvex}), provide natural examples of exhaustion positive line bundles.
As an important and typical example of exhaustion singular-positivity, in the setting of Theorem \ref{Theorem: embedding theorem}, 
we show in $\S\ref{Section: exhaustion positivity}$ that $\pi^*L\longrightarrow\tx$ is exhaustion singular-positive with $\cal{O}_{\tx}$ (see Theorem \ref{Theorem: exh sing positivity on tlX}).
In particular, in order to obtain a global embedding, it is also necessary that, for a holomorphic line bundle $P\longrightarrow\tx$ equipped with a singular semi-positive Hermitian metric $h_P$, 
the twisted line bundle $\pi^*L\otimes P$ is exhaustion singular-positive with $\scr{I}(h_P)$ (see Theorem \ref{Theorem: exh sing positivity on tlX twisted by psef}), which can be proved using the strong openness property \cite{GZ15}.

The following question has held great interest in Stein theory: 

\vspace{3mm}
\noindent
\textbf{The Union Problem.}  \textit{Is a reduced complex space $\Omega=\bigcup_{\nu\in\bb{N}}\Omega_\nu$, given as an increasing union $\Omega_1\subset \Omega_2\subset \Omega_3\subset\cdots$ of open Stein subspaces, itself a Stein space?} 
\vspace{3mm}

Every compact analytic subspace of such a space $\Omega$ is certainly finite.
This problem was raised by Behnke-Thullen \cite{BT33} in the case where $\Omega$ is an open subset of $\bb{C}^n$, and it was solved for various special domains. 
After that, it was solved affirmatively by Behnke-Stein \cite{BS39}.
Stein provided a sufficient condition for the Union Problem:

\begin{theorem}[\textnormal{\cite[Satz 1.3]{Ste56}}]\label{Theorem: Stein's result}
    Let $\Omega$ be a reduced complex space and $\Omega_1\subset \Omega_2\subset\cdots$ be an increasing sequence of Stein domains with $\bigcup_{\nu\in\bb{N}}\Omega_\nu=\Omega$. 
    If every pair $(\Omega_\nu,\Omega_{\nu+1})$ is Runge, i.e., $\cal{O}(\Omega_{\nu+1})$ dense in $\cal{O}(\Omega_\nu)$ in the topology of compact convergence, then $\Omega=\bigcup_{\nu\in\bb{N}}\Omega_\nu$ is Stein.
\end{theorem}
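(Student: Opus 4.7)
The plan is to verify that $\Omega$ satisfies the three standard characterizing properties of a Stein space: (I) holomorphic convexity, (II) separation of distinct points by $\cal{O}(\Omega)$, and (III) generation, for each $x\in\Omega$, of the maximal ideal $\fra{m}_x\subset\cal{O}_{\Omega,x}$ by (germs of) elements of $\cal{O}(\Omega)$. In each case I would first locate the relevant data inside a sufficiently large member $\Omega_{\nu_0}$ of the exhaustion, invoke the Stein property of $\Omega_{\nu_0}$ to manufacture the desired holomorphic function(s), and finally transfer the construction to $\Omega$ via the Runge hypothesis.

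The technical core is a telescoping approximation lemma: for any $f\in\cal{O}(\Omega_{\nu_0})$, compact $L\subset\Omega_{\nu_0}$, and $\varepsilon>0$, there exists $\tilde f\in\cal{O}(\Omega)$ with $\sup_L|\tilde f-f|<\varepsilon$. I would set $f_{\nu_0}:=f$ and define $f_{\nu+1}\in\cal{O}(\Omega_{\nu+1})$ inductively by the Runge density of $(\Omega_\nu,\Omega_{\nu+1})$ so that $\sup_{\overline{\Omega_{\nu-1}}}|f_{\nu+1}-f_\nu|<\varepsilon\cdot 2^{\nu_0-\nu-1}$. On every compact subset of $\Omega$ the tail $(f_\nu)_{\nu\gg 0}$ is uniformly Cauchy, hence converges locally uniformly to some $\tilde f\in\cal{O}(\Omega)$, and the telescoping bound yields the required estimate on $L$.

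Conditions (II) and (III) then follow quickly. For (II), pick $\nu_0$ with $x,y\in\Omega_{\nu_0}$, use the Stein property of $\Omega_{\nu_0}$ to find $f\in\cal{O}(\Omega_{\nu_0})$ with $f(x)\neq f(y)$, and apply the lemma on $L=\{x,y\}$ with $\varepsilon<|f(x)-f(y)|/3$. For (III), pick $\nu_0$ with $x\in\Omega_{\nu_0}$, choose generators $f_1,\ldots,f_r\in\cal{O}(\Omega_{\nu_0})$ of $\fra{m}_x\subset\cal{O}_{\Omega_{\nu_0},x}=\cal{O}_{\Omega,x}$, approximate each on a small polydisc-like chart around $x$, and use Cauchy estimates (to control low-order jets) together with a Nakayama argument to conclude that the lifted $\tilde f_1,\ldots,\tilde f_r\in\cal{O}(\Omega)$ still generate $\fra{m}_x$.

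The main obstacle is condition (I). The decisive external ingredient is the hull-stability characterization of Runge pairs for Stein spaces: for $U\Subset V$ both Stein, $(U,V)$ is Runge if and only if $\widehat{L}_{\cal{O}(V)}\subset U$ for every compact $L\subset U$. Granting this, observe first that the Runge property is transitive along restrictions (a two-step approximation), so each pair $(\Omega_{\nu_0},\Omega_\nu)$ with $\nu\geq\nu_0$ is Runge. Hence, for compact $K\subset\Omega_{\nu_0}$, one has simultaneously $\widehat{K}_{\cal{O}(\Omega_\nu)}\subset\Omega_{\nu_0}$ (by hull-stability) and $\widehat{K}_{\cal{O}(\Omega_\nu)}\cap\Omega_{\nu_0}=\widehat{K}_{\cal{O}(\Omega_{\nu_0})}$ (by transferring hull-inequalities through Runge density), which forces the equality $\widehat{K}_{\cal{O}(\Omega_\nu)}=\widehat{K}_{\cal{O}(\Omega_{\nu_0})}$ for all $\nu\geq\nu_0$. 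A final application of the approximation lemma identifies $\widehat{K}_{\cal{O}(\Omega)}$ with this common compact set, completing the verification of holomorphic convexity and thus the proof.
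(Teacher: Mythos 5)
The paper does not give its own proof of this statement; it is quoted from K. Stein's 1956 article and invoked as a black box (the theorem environment carries the attribution ``see [Ste56]''). So there is no paper proof for your attempt to be measured against; what follows is an assessment of your argument on its own merits.

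Your overall strategy — verify the three defining properties of a Stein space (holomorphic convexity, separation of points, local generation of maximal ideals) by first producing the needed functions on a large $\Omega_{\nu_0}$ and then pushing them out to $\Omega$ via a telescoping Runge approximation — is the standard route and is essentially correct. Two places deserve tightening. First, in the telescoping lemma the bound at the initial step of the induction is stated on $\overline{\Omega_{\nu_0-1}}$, which need not contain the given compact set $L\subset\Omega_{\nu_0}$; you should instead approximate $f_{\nu_0}$ by $f_{\nu_0+1}$ directly on $L$ (with error $<\varepsilon/2$, say) and only from the second step on switch to the sets $\overline{\Omega_{\nu-1}}$, which do contain $L$ once $\nu\ge\nu_0+1$. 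This is purely an indexing fix and does not affect the soundness of the limit construction. Second, for property (III) the phrase ``Cauchy estimates to control low-order jets'' needs a word of justification on a singular reduced complex space: one should fix a local closed embedding $\iota_U:U\hookrightarrow V\subset\bb{C}^N$ near $x$, use that $\cal{O}_{\Omega,x}/\fra{m}_x^2$ is finite dimensional, and observe that the evaluation of the $1$-jet class in this quotient depends continuously on $f$ in the topology of compact convergence because germs of holomorphic functions on $\Omega$ lift to germs on $V$ with comparable sup-norm control on a fixed polydisc; then ``being a spanning set of $\fra{m}_x/\fra{m}_x^2$'' is an open condition and Nakayama finishes the argument. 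The heart of your proof, part (I), is correct: you rightly identify the hull-stability characterization of Runge pairs of Stein spaces ($\widehat{K}_{\cal{O}(\Omega_\nu)}=\widehat{K}_{\cal{O}(\Omega_{\nu_0})}$ for all $\nu\ge\nu_0$) as the decisive external input, the transitivity of the Runge property you claim is a routine two-step approximation, and the final identification $\widehat{K}_{\cal{O}(\Omega)}=\widehat{K}_{\cal{O}(\Omega_{\nu_0})}$ indeed follows by applying the telescoping lemma to preserve a strict inequality $|f(z)|>\sup_K|f|$ witnessing $z\notin\widehat{K}_{\cal{O}(\Omega_\nu)}$. With these small repairs the sketch is a complete and correct proof.
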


Nevertheless, if a Runge condition is not imposed on the $\Omega_\nu$'s, the space $X$ is not necessarily holomorphically convex.
In fact, Forn{\ae}ss constructed in \cite{For76,For77} a non-holomorphically convex manifold as a \textit{counterexample} to the Union Problem. 
In other words, the Union Problem \textit{cannot} be solved in general. 
Markoe and Silve showed in \cite{Mar77,Sil78} that the Union Problem is solvable if and only if $H^1(\Omega,\mathcal{O}_{\Omega}) = 0$.

Here, we present the following as an application of approximation theorems.

\begin{theorem}\label{Theorem: Union Problem on w.p.c}
    The Union Problem can be solved when the union is a weakly pseudoconvex manifold. 
    In other words, if a weakly pseudoconvex manifold $\Omega$ is given as an increasing union $\Omega=\bigcup_{\nu\in\bb{N}}\Omega_\nu$ of open Stein subspaces $\Omega_1\subset\Omega_2\subset\cdots$, then $\Omega$ is Stein. 
\end{theorem}

Forn{\ae}ss and Narasimhan posed the following problem in \cite{FN80}: if $\Omega=\bigcup_{\nu\in\bb{N}}\Omega_\nu$ is an open subset of a Stein space, does the Union Problem admit a solution? 
This remains an important open problem on Stein spaces with singularities, and at present, the term \textit{Union Problem} often refers to this formulation.

\section{Preliminaries}

\subsection{Canonical desingularization of complex spaces}

Even in the non-compact case, it is known that a global resolution of singularities can be obtained, which is locally given by a finite sequence of blow-ups with smooth centers. 
This is achieved by patching together the resolutions 
constructed on relatively compact subsets.

\begin{theorem}[\textnormal{\cite[Theorem\,13.3 and 13.4]{BM97}}]\label{Theorem: canonical desingularization}
    Let $X$ be a complex space which is not necessarily compact or reduced. 
    There exists a desingularization $\pi:\tx\longrightarrow X$, which is a composite of a locally finite sequence of blow-ups, such that 
    \begin{itemize}
        \item the map $\pi:\tx\longrightarrow X$ is proper holomorphic.
        \item the set $\tx$ is smooth, 
        and the $\pi$-exceptional set $E:=\pi^{-1}(\xs)$ is simple normal crossing, where $E$ denotes the collection of all exceptional divisors.
        \item for any relatively compact open subset $V$ of $X$, the restriction $\pi|_V:\tx|_{\pi^{-1}(V)}\longrightarrow X|_V$ is a composite of a finite sequence of blow-ups with smooth centres. 
        \item the restriction $\pi|_{\tx\setminus E}:\tx\setminus E\longrightarrow X\setminus\xs=\reg$ is biholomorphic.
        \item $\pi$ is canonical in the sense that for any isomorphic $\varphi:X|_U\overset{\cong}{\longrightarrow} X|_V$, where $U$ and $V$ are open subsets of $X$, lifts to an isomorphism $\widetilde{\varphi}:\tx|_{\pi^{-1}(U)}\longrightarrow\tx|_{\pi^{-1}(V)}$.
    \end{itemize}
\end{theorem}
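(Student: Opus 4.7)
The plan is to reduce the global statement to the known \emph{local} case by exploiting the canonicity (functoriality) of the resolution. The basic building block is the canonical resolution of singularities for \emph{relatively compact} complex spaces in its functorial form (due to Bierstone--Milman, building on Hironaka): for any relatively compact open $V \Subset X$, there exists a finite sequence of blow-ups with smooth centres contained in the singular locus whose composite $\pi_V \colon \tl{V} \to X|_V$ is a strong desingularization (smooth total space, SNC exceptional divisor, isomorphism over the regular locus), and the construction moreover commutes with open immersions and with local analytic isomorphisms. I would take this functorial local resolution as input rather than re-proving it, since it is the genuinely deep ingredient.

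With this in hand, I would first fix an exhaustion $V_1 \Subset V_2 \Subset V_3 \Subset \cdots$ of $X$ by relatively compact open subsets (this exists because $X$ is paracompact). Applying the local resolution to each $V_k$ yields $\pi_k \colon \tl{V}_k \to X|_{V_k}$. The functoriality of the construction under the open immersion $V_k \hookrightarrow V_{k+1}$ provides a canonical biholomorphism $\tl{V}_k \cong \pi_{k+1}^{-1}(V_k)$ compatible with $\pi_k$ and with the restriction of $\pi_{k+1}$. Gluing along these canonical identifications produces a complex space $\tl{X}$ together with a holomorphic map $\pi \colon \tl{X} \to X$ such that $\pi^{-1}(V_k) = \tl{V}_k$ for every $k$. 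Properness of $\pi$ is then immediate by locality on the base: any compact $K \subset X$ lies in some $V_k$, and $\pi^{-1}(K) \subset \tl{V}_k$ is compact by the properness of $\pi_k$.

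The remaining properties lift at once from the local model: smoothness of $\tl{X}$, the SNC property of $E = \pi^{-1}(\xs)$, and the assertion that $\pi|_{\tl{X} \setminus E}$ is biholomorphic onto $\reg$ all hold on each $\tl{V}_k$ and therefore globally. The third bullet of the statement is essentially built into the gluing: on any relatively compact $V$, I may enlarge to some $V_k \supset V$, whence $\pi|_V$ agrees with the finite composite of blow-ups defining $\pi_k$. For the canonicity clause, given an isomorphism $\varphi \colon X|_U \xrightarrow{\cong} X|_V$, I would cover $U$ and $V$ by relatively compact pieces, apply the functoriality of the local construction to produce a lift $\tl{\varphi}$ on each piece, and patch using the uniqueness clause in functoriality.

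The main obstacle is the functorial local resolution itself. Even Hironaka's original theorem proceeds by a long induction on dimension, tracking invariants such as the Hilbert--Samuel function or the order of an ideal to single out a canonical centre for the next blow-up, and the functorial refinement---indispensable here because it is precisely what makes the patching well defined---requires an algorithmic choice of centres commuting with \'etale and smooth morphisms. I would quote this as a black box from \cite{BM97}; once accepted, the passage from the relatively compact case to the non-compact case is a formal patching exercise of the kind sketched above.
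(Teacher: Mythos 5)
Your proposal is correct and follows exactly the approach the paper indicates: the paper itself quotes this theorem from Bierstone--Milman \cite{BM97} without proof, remarking only that the global statement follows by ``patching together the resolutions constructed on relatively compact subsets,'' which is precisely the exhaustion-plus-functoriality argument you flesh out. Treating the functorial local resolution as a black box and deriving the global statement by gluing over an exhaustion, with properness checked on compacts, is the intended reduction.
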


The desingularization in this theorem is referred to as the \textit{canonical} \textit{desingularization}.

\begin{lemma}[{Negativity Lemma, cf. \cite[Remark 1.6.2 (2)]{Kaw24}, the proof of \cite[Lemma\,3.6]{Wat24}}]\label{Lemma: Negativity lemma}
    Let $X$ be a complex space, 
    $\pi:\tx\longrightarrow X$ be a canonical desingularization and $E=\sum_{j\in\cal{J}}E_j:=\pi^{-1}(\xs)$ be the $\pi$-exceptional set which is a reduced simple normal crossing divisor. 
    Then, for any relatively compact open subset $V$ of $X$, there exists an effective divisor $E_V:=\sum_{j\in J_V}b_jE_j$ with $b_j\in\bb{N}$ such that the line bundle $\cal{O}_{\widetilde{X}}(-E_V)$ admits a smooth Hermitian metric $h^*_{E_V}$ on $\widetilde{V}:=\pi^{-1}(V)$ 
    whose curvature compensates for the loss of positivity of the pullback $\pi^*\omega$ on $\tl{V}$, where $\pi^*\omega$ degenerates along $E$ for any Hermitian metric $\omega$ on $X$ and $J_V:=\{j\in\cal{J}\mid\pi(E_j)\cap V\ne\emptyset\}$.  
    This metric $h^*_{E_V}$ induced from the normal bundles of the components of the divisor, and for any Hermitian metric $\omega$ on $X$, there exists $\varepsilon_V>0$ such that 
    \begin{align*}
        \pi^*\omega+\varepsilon\iO{\cal{O}_{\widetilde{X}}(-E_V),h^*_{E_V}}>0
    \end{align*}
    on $\tl{V}$ for any $0<\varepsilon<\varepsilon_V$.
    In particular, the line bundle $\cal{O}_{\widetilde{X}}(-E_V)|_{\tl{V}}$ is $\pi$-ample. 
\end{lemma}

\subsection{Plurisubharmonicity and positivity on complex spaces}

Let $X$ be a complex space of pure dimension $n$. 

\begin{definition}[\textnormal{\cite[Chapter\,V, Definition\,1.4]{GPR94}, \cite{Fuj75}}]
    A function $\varphi:X\longrightarrow[-\infty,+\infty)$ is called (resp. \textit{strictly}) \textit{plurisubharmonic} if for any $x\in X$ 
    there exist an open neighborhood $U$ admitting a closed embedding $\iota_U:U\hookrightarrow V\subset\bb{C}^N$, here $V$ is an open subset of $\bb{C}^N$, 
    and a (resp. strictly) plurisubharmonic function $\widetilde{\varphi}:V\longrightarrow[-\infty,+\infty)$ such that $\varphi|_U=\widetilde{\varphi}\circ\iota_U$.
\end{definition}

Concerning the smoothness of functions, it is defined similarly by requiring that it can be expressed as the restriction of smooth functions on $V$.
Moreover, a function is said to be \textit{quasi}-\textit{plurisubharmonic} if it can be written locally as the sum of a smooth function and a plurisubharmonic function.

Let $L\longrightarrow X$ be a holomorphic line bundle. Then there exist an open covering $\{U_\alpha\}_{\alpha\in\Lambda}$ of $X$ and isomorphisms $\iota_\alpha:L|_{U_\alpha}\overset{\cong}{\longrightarrow}U_\alpha\times\bb{C}$. 
Conversely, the set of holomorphic functions $\{f_{\alpha\beta}\}_{\alpha,\beta\in\Lambda}$, where $f_{\alpha\beta}:=\iota_\alpha\circ\iota_\beta^{-1}|_{U_\alpha\cap U_\beta}\in \varGamma(U_\alpha\cap U_\beta,\cal{O}_X)$, defines $L$. 
Such a $(\{f_{\alpha\beta}\},\{U_\alpha\})_{\Lambda}$ is called a \textit{system of transition functions of} $L$.

\begin{definition}
    We say that $h$ is \textit{smooth Hermitian metric} on $L$ if for a system of transition functions $(\{f_{\alpha\beta}\},\{U_\alpha\})_{\Lambda}$ of $L$, there exists a collection of positive smooth functions $h=\{h_\alpha:U_\alpha\longrightarrow\bb{R}_{>0}\}_{\alpha\in\Lambda}$ such that $h_\beta/h_\alpha=|f_{\alpha\beta}|^2$ on $U_\alpha\cap U_\beta$.
\end{definition}

For any trivialization $\tau:L|_U\overset{\cong}{\longrightarrow}U\times\bb{C}$, a smooth Hermitian metric $h$ on $L$ can be expressed as 
\begin{align*}
    ||\xi||_h=|\tau(\xi)|e^{-\varphi(x)}, \quad x\in U, \,\xi\in L_x
\end{align*}
using a smooth function $\varphi$ on $U$. 
The smooth function $\varphi:U\longrightarrow\bb{R}$ is called the \textit{weight function of} $h$ \textit{with respect to the trivialization} $\tau$.

For a smooth Hermitian metric $h=\{h_\alpha:U_\alpha\longrightarrow\bb{R}_{>0}\}_{\alpha\in\Lambda}$ with respect to a system of transition functions $(\{f_{\alpha\beta}\},\{U_\alpha\})_{\Lambda}$, 
the weight function of $h$ with respect to any trivialization $\tau:L|_U\overset{\cong}{\longrightarrow}U\times\bb{C}$ can be expressed on $U\cap U_\alpha$ as $\varphi=-\frac{1}{2}\log h_\tau=-\frac{1}{2}\log h_\alpha+\log|\iota_\alpha\circ\tau^{-1}|$, 
here the new element $h_\tau$ of the collection $h=\{h_\alpha\}_{\alpha\in\Lambda}$ is defined by $h_\tau=h_\alpha|\iota_\alpha\circ\tau^{-1}|^2$ on $U\cap U_\alpha$. 
In particular, if $\tau = \iota_\alpha$, then $\varphi = -\frac{1}{2}\log h_\alpha$.

\begin{definition}
    A smooth Hermitian metric $h$ on $L$ is said to be \textit{positive} if there exists a collection of positive smooth functions $h=\{h_\alpha\}_{\alpha\in\Lambda}$ on $L$ such that $-\log h_\alpha$ is strictly plurisubharmonic on $U_\alpha$ for all $\alpha\in\Lambda$. 
    In other words, the weight function of $h$ with respect to any trivialization $\tau$ is smooth strictly plurisubharmonic. 

    A holomorphic line bundle $L\longrightarrow X$ is said to be \textit{positive} if there exists a smooth Hermitian metric $h$ on $L$ which is positive. 
\end{definition}

\subsection{Singular Hermitian metric and exhaustion positivity}

In this subsection, let $X$ be a complex manifold, and introduce the notions of singular Hermitian metrics on holomorphic line bundles and their positivity.

\begin{definition}[\textnormal{\cite{Dem90,Dem12}}]
    A \textit{singular} \textit{Hermitian} \textit{metric} $h$ on a holomorphic line bundle $L\longrightarrow X$ is a metric for which 
    the weight function $\varphi$ of $h$ with respect to any local trivialization $\tau:L|_U\stackrel{\simeq}{\longrightarrow}U\times\bb{C}$ is locally integrable, i.e., $\varphi\in L^1_{loc}(U)$.
    
    
    In other words, for a fiexd smooth Hermitian metric $h_0$, the singular Hermitian metric $h$ can be written as $h=h_0 e^{-2\varphi}$ for some locally integrable function $\varphi\in L^1_{loc}(X,\bb{R})$. 
\end{definition}

\begin{definition}[\textnormal{\cite[Definition\,3.2]{Wat23}}]
    Let $h$ be a singular Hermitian metric on a holomorphic line bundle $L\longrightarrow X$. We say that a singular Hermitian metric $h$ is 
    \begin{itemize}
        \item \textit{singular} \textit{semi}-\textit{positive} if $\iO{L,h}\geq0$ in the sense of currents, i.e., the weight of $h$ with respect to any trivialization coincides with some plurisubharmonic function almost everywhere. 
        \item \textit{singular} \textit{positive} if $\iO{L,h}\geq\varepsilon\omega$ in the sense of currents for some Hermitian metric $\omega>0$ and some smooth positive function $\varepsilon:X\longrightarrow\bb{R}_{>0}$, 
        i.e., the weight of $h$ with respect to any trivialization coincides with some strictly plurisubharmonic function almost everywhere. 
    \end{itemize}
\end{definition}

Note that, being singular semi-positive is coincides with being pseudo-effective on compact complex manifolds. 
Furthermore, being singular positive also coincide with begin big on compact \kah manifolds by Demailly's characterization (see \cite{Dem90}, \cite[Chapter6]{Dem12}).

\begin{definition}
    A function $\varphi:X\longrightarrow[-\infty,+\infty)$ is said to be a \textit{quasi}-\textit{plurisubharmonic} if $\varphi$ is locally the sum of a plurisubharmonic function and a smooth function. 
    The \textit{multiplier} \textit{ideal} \textit{sheaf} $\scr{I}(\varphi)\subset\cal{O}_X$ is defined by 
    \begin{align*}
        \scr{I}(\varphi)(U):=\{f\in\cal{O}_X(U)\mid |f|^2 e^{-2\varphi}\in L^1_{loc}(U)\}
    \end{align*}
    for any open subset $U\subset X$. For a singular Hermitian metric $h$ on $L$ with the local weight $\varphi$, i.e., $h=h_0e^{-2\varphi}$, we define the multiplier ideal sheaf of $h$ by $\scr{I}(h):=\scr{I}(\varphi)$.
\end{definition}

The $\it{Lelong}$ $\it{number}$ of a quasi-plurisubharmonic function $\varphi$ on $X$ is defined by 
\begin{align*}
    \nu(\varphi,x):=\liminf_{z\to x}\frac{\varphi(z)}{\log|z-x|}
\end{align*}
for some coordinate $(z_1,\cdots,z_n)$ around $x\in X$.
For the relationship between the Lelong number of $\varphi$ and the integrability of $e^{-\varphi}$, the following important result obtained by Skoda in \cite{Sko72} is known; If $\nu(\varphi,x)<1$ then $e^{-2\varphi}$ is integrable around $x$.
From this, particularly if $\nu(-\log h,x)<2$ then $\mathscr{I}(h)=\mathcal{O}_{X,x}$ immediately.

\begin{definition}[\textnormal{=\,Definition\,\ref{Definition: exhaustion positive}}]
    Let $(X,\varPsi)$ be a weakly pseudoconvex manifold and $\scr{J}$ be an ideal sheaf on $X$. 
    We say that a holomorphic line bundle $L\longrightarrow X$ is 
    \textit{exhaustion} \textit{singular}-\textit{positive} \textit{with} $\scr{J}$, 
    if there exist an increasing sequence of real numbers $\{c_j\}_{j\in\mathbb{N}}$ diverging to $+\infty$ and a sequence of singular Hermitian metrics $\{h_j\}_{j\in\bb{N}}$ such that each $h_j$ is defined on $L|_{X_{c_j}}$, is singular positive on $X_{c_j}$, and satisfies $\scr{I}(h_j)=\scr{J}$ on $X_{c_j}$.
    
    In particular, if each metric $h_j$ in the sequence $\{h_j\}_{j\in\bb{N}}$ is smooth, we simply say that $L$ is \textit{exhaustion positive}.
\end{definition}

It is clear that when $X$ is compact, these notions simply coincides with being singular positive (resp. positive).
exhaustion (singular)-positivity is a concept that arises very naturally in the following way.

\begin{example}\label{Example: exhaustion positivity}
    Let $X$ be a weakly pseudoconvex manifold. 
    If $X$ is exhaustable by an increasing sequence of Stein open subsets (for instance, if $X$ itself is Stein), then any holomorphic line bundle $L\longrightarrow X$ is exhaustion positive.  
    Furthermore, for any singular Hermitian metric $h$ on $L$ whose curvature is locally bounded from below in the sense of currents, the line bundle $L$ is exhaustion singular-positive with $\scr{I}(h)$.
\end{example}

\begin{example}[\textnormal{cf.\,\cite[Lemma\,2.3]{Tak98b}}]\label{Example: Tak98b Lemma}
    Let $S$ be a Stein manifold and $X$ be a weakly pseudoconvex manifold. 
    Here, let $Y := S \times X$, then $Y$ is weakly pseudoconvex; in particular, $X$ may be compact.
    Let $L$ be a holomorphic line bundle on $Y=S\times X$.
    Assume that the restriction $L|_{p\times X}$ is positive for every $p\in S$ (regarded as a line bundle on $X$).
    Then $L$ 
    is exhaustion positive.
\end{example}

\begin{example}\label{Example: Stein times weakly pseudoconvex}
    Let $S$ be a Stein manifold, $X$ be a weakly pseudoconvex manifold and $L\longrightarrow X$ be a holomorphic line bundle. 
    Here, let $Y:=S\times X$ and denote by $p_X:Y\longrightarrow X$ the natural projection. 
    If $L$ is exhaustion positive then the holomorphic line bundle $p_X^*L\longrightarrow Y$ is also exhaustion positive.
\end{example}

\begin{example}[\textnormal{cf.\,\cite[Theorem\,3.1]{Tak98b}}]\label{Example: Tak98b Theorem}
    Let $L$ be a holomorphic line bundle on a toroidal group $T$ with a \kah form in the first Chern class $c_1(L)$. 
    Then $L$ 
    is exhaustion positive.
\end{example}

Here, every toroidal group is a complex Lie group and hence weakly pseudoconvex, and Example \ref{Example: Tak98b Theorem} is obtained by weak $\partial\overline{\partial}$-lemma (see \cite[Weak $\partial\overline{\partial}$-Lemma 3.14]{Tak98b}).

\begin{definition}[\textnormal{cf.\,\cite[Definition\,6.3]{Dem12}}]
    Let $L$ be a holomorphic line bundle on a complex manifold. 
    Consider two singular Hermitian metrics $h_1$, $h_2$ on $L$ with curvature $\iO{L,h_j}\geq\gamma$ in the sense of currents for some continuous real $(1,1)$-form $\gamma$ on $X$, which is not necessarily positive.
    \begin{itemize}
        \item [($a$)] We will write $h_1\preccurlyeq h_2$, and say that $h_1$ is \textit{less singular} than $h_2$, if there exists a constant $C > 0$ such that $h_1\leq Ch_2$.
        \item [($b$)] We will write $h_1 \sim h_2$, and say that $h_1$, $h_2$ are \textit{equivalent with respect to singularities}, if there exists a constant $C > 0$ such that $C^{-1}h_2\leq h_1\leq Ch_2$.
    \end{itemize}
\end{definition}

Finally, we introduce the strong openness property, which is also important for applications.

\begin{theorem}[\textnormal{Strong openness property, cf. \cite{GZ15}}]\label{Theorem: strong openness property}
    Let $\varphi$ be a negative plurisubharmonic function on $\Delta^n\subset\bb{C}^n$, and let $\psi\not\equiv-\infty$ be a negative plurisubharmonic function on $\Delta^n$. 
    Then we have $\scr{I}(\varphi)=\bigcup_{\varepsilon>0}\scr{I}(\varphi+\varepsilon\psi)$.
\end{theorem}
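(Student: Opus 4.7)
The inclusion $\bigcup_{\varepsilon>0}\scr{I}(\varphi+\varepsilon\psi)\subseteq\scr{I}(\varphi)$ is immediate: since $\psi\leq 0$ we have $e^{-2(\varphi+\varepsilon\psi)}\geq e^{-2\varphi}$, so local $L^2$ integrability of a germ $f$ against the perturbed weight implies integrability against $e^{-2\varphi}$. All the content lies in the reverse inclusion, and this is what my plan addresses. Fix a point $z_0\in\Delta^n$, which I take to be the origin, and a germ $f\in\scr{I}(\varphi)_{z_0}$; the goal is to produce $\varepsilon>0$ and a polydisc $U\ni 0$ on which $\int_U|f|^2 e^{-2(\varphi+\varepsilon\psi)}<\infty$.

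The structural observation I would first exploit is that $\{\scr{I}(\varphi+\varepsilon\psi)\}_{\varepsilon>0}$ is an increasing family of coherent ideal sheaves (increasing as $\varepsilon\downarrow 0$, coherent by Nadel's theorem); hence by the Noetherian property their union coincides locally with $\scr{I}(\varphi+\varepsilon_0\psi)$ for some $\varepsilon_0>0$. It therefore suffices to produce, for each $f\in\scr{I}(\varphi)_{z_0}$, one such $\varepsilon_0$ that captures it. My plan is then to follow the Guan--Zhou strategy whose engine is the Ohsawa--Takegoshi $L^2$ extension theorem with sharp constant, proceeding by induction on $n$. The case $n=1$ reduces to a direct subharmonic estimate on the disc combined with Skoda's integrability criterion applied to $\varepsilon\psi$. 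For $n\geq 2$, I would restrict to a generic complex hyperplane $H$ through $z_0$, chosen so that $\psi|_H\not\equiv-\infty$ and $f|_H\in\scr{I}(\varphi|_H)$ (almost every such $H$ works by the slicing behaviour of $L^2$ integrals of plurisubharmonic weights), apply the inductive hypothesis on $H$ to obtain $\varepsilon>0$ with $f|_H\in\scr{I}((\varphi+\varepsilon\psi)|_H)$, and then invoke the sharp extension theorem to lift $f|_H$ to a holomorphic function $F$ on a polydisc around $z_0$ satisfying the required integrability. A standard iteration that divides the residue $f-F$ by the defining equation of $H$ and re-extends then absorbs the error and yields the claim for $f$.

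The main obstacle is the quantitative control in the extension step: the extension constants must remain bounded as $\varepsilon\downarrow 0$ so that the iteration converges, and the restricted weight $(\varphi+\varepsilon\psi)|_H$ must satisfy the finite-integral hypothesis uniformly in $\varepsilon$. The sharpness of the Guan--Zhou extension theorem---its optimal constant, not merely existence---is precisely what closes this loop; substituting a non-optimal extension visibly breaks the argument. Accordingly, the principal labour is in adapting (or carefully invoking) this sharp extension to obtain uniform estimates, while the coherence reduction and the slicing step are comparatively routine.
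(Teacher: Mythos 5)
This theorem is stated in the paper as a cited result (``cf.~\cite{JM12,GZ15}'') and no proof is given, so there is no in-paper argument to compare against. Your treatment of the easy inclusion $\bigcup_{\varepsilon>0}\scr{I}(\varphi+\varepsilon\psi)\subseteq\scr{I}(\varphi)$ is correct. For the hard inclusion you propose to reprove the strong openness theorem itself; two remarks are in order. First, the architecture you describe---restriction to a generic hyperplane through $z_0$, induction on dimension, sharp $L^2$ extension---is more in the spirit of the later simplified proofs of Hiep and Lempert than of Guan--Zhou's argument in \cite{GZ15}; a minor attribution point. More substantively, the slicing step you call ``comparatively routine'' is not: to run the induction you need $f|_H\in\scr{I}(\varphi|_H)$ at the \emph{given} point $z_0$ for a generic hyperplane $H$ through $z_0$, whereas Fubini only gives square-integrability of $f$ against $e^{-2\varphi|_H}$ on almost every \emph{parallel translate} of a fixed $H$. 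Passing to a hyperplane through a fixed point, and controlling the restricted norms uniformly enough to sustain the iteration as $\varepsilon\downarrow 0$, is precisely where the labour lies; as written your outline does not close that gap.

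Since the paper intends this as a corollary of the cited result rather than a new theorem, the cleaner route is a direct reduction to the one-weight strong openness $\scr{I}(\varphi)=\bigcup_{\lambda>1}\scr{I}(\lambda\varphi)$ of \cite{GZ15}. Fix $f\in\scr{I}(\varphi)_{z_0}$. Then there exist $\lambda>1$ and a neighborhood $U\ni z_0$ with $\int_U|f|^2e^{-2\lambda\varphi}\,dV<\infty$. Let $q=\lambda/(\lambda-1)$ be the conjugate exponent. H\"older's inequality gives
\begin{align*}
\int_U|f|^2e^{-2\varphi-2\varepsilon\psi}\,dV
\leq\Big(\int_U|f|^2e^{-2\lambda\varphi}\,dV\Big)^{1/\lambda}\Big(\int_U|f|^2e^{-2q\varepsilon\psi}\,dV\Big)^{1/q}.
\end{align*}
The first factor is finite. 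After shrinking $U$ so that $|f|$ is bounded there, Skoda's integrability criterion makes the second factor finite for every $\varepsilon>0$ small enough that $q\varepsilon\,\nu(\psi,z_0)<1$, using $\psi\not\equiv-\infty$ to ensure $\nu(\psi,z_0)<\infty$. Hence $f\in\scr{I}(\varphi+\varepsilon\psi)_{z_0}$ for all small $\varepsilon>0$, which establishes the hard inclusion stalk by stalk without reproving strong openness.
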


\subsection{Algebro-geometric positivity}

Let $X$ be a complex manifold which is not necessarily compact, $E\longrightarrow X$ be a holomorphic vector bundle and $V$ be a finite dimensional linear subspace of $H^0(X,E)$. 
We define the linear subsystem $|V|$ corresponding to $V$ by $|V| = \bb{P}(V)$. 
The base locus of the linear subsystem $|V|$ is given by 
\begin{align*}
    \rom{Bs}_{|V|}=\bigcap_{s\in V}s^{-1}(0)=\{x\in X\mid s(x)=0 \text{~ for ~ all ~} s\in V\setminus\{0\}\}.
\end{align*}

Let $U\subseteq X$ be an open subset of $X$.
The notions of \textit{generated by global sections} and \textit{semi}-\textit{ample} for a holomorphic vector bundle $E\longrightarrow X$ are known (see \cite[Chapter VII]{Dem-book}), and the corresponding notions \textit{on} $U$ are defined in the same way.
Here, if $E$ is a holomorphic line bundle, then being base-point free on $U$, being generated by global sections on $U$, and having $\rom{Bs}_{|E|}\cap U\ne\emptyset$ are equivalent.

Let $L\longrightarrow X$ be a holomorphic line bundle and $V$ be a finite dimensional linear subspace of $H^0(X,L)$.
Let $\sigma_0,\ldots,\sigma_N$ be a basis of the linear subsystem $V$, then we have the Kodaira map $\varPhi_V:X\dashrightarrow \bb{P}^N:x\longmapsto[\sigma_0(x):\cdots:\sigma_N(x)]$,
which is holomorphic on $X\setminus \rom{Bs}_{|V|}$, and that the isomorphism $L\cong\varPhi^*_V\cal{O}_{\bb{P}^N}(1)$ is obtained on $X\setminus \rom{Bs}_{|V|}$.

\begin{definition}[{cf. \cite{Tak98,Fuj75}}]
    Let $X$ be a complex manifold (not necessarily compact), $U\subseteq X$ be an open subset of $X$ and $L$ be a holomorphic line bundle on $X$. 
    \begin{itemize}
        \item $L$ is \textit{very ample on} $U$ if there exists a finite dimensional linear subspace $V$ of $H^0(U,L)$ such that the Kodaira map 
        $\varPhi_V:U\dashrightarrow \bb{P}^{\rom{dim}\,V-1}$ is holomorphic embedding on $U$, i.e., injective holomorphic immersion.
        \item $L$ is \textit{ample on} $U$ if there is an integer $m_U$ such that $L^{\otimes m_U}$ is very ample on $U$.
    \end{itemize}
\end{definition}

\begin{definition}[{\cite{Wat24}}]
    Let $X$ be a complex manifold (not necessarily compact) and $L\longrightarrow X$ be a holomorphic line bundle. Set 
    \begin{align*}
        \varrho_m:=\sup\{\rom{dim}\,\overline{\varPhi_V(X)}\mid V\subseteq H^0(X,L^{\otimes m}) \text{~with~} \rom{dim}\,V<+\infty\}
    \end{align*}
    if $H^0(X,L^{\otimes m})\ne\{0\}$, and $\varrho_m=-\infty$ otherwise. The \textit{Kodaira}-\textit{Iitaka} \textit{dimension} of $L$ is defined by $\kappa(L):=\max\{\varrho_m\mid m\in\bb{N}\}$. 
    We say that $L$ is \textit{big} if $\kappa(L)=\rom{dim}\,X$. 

    For an open subset $U\subseteq X$, the \textit{volume} of $L$ on $U$ is defined by 
    \begin{align*}
        \rom{Vol}_U(L):=n!\cdot\liminf_{k\to+\infty}\frac{\rom{dim}\,H^0(U,L^{\otimes k})}{k^n}.
    \end{align*}
\end{definition}

It is well known (see \cite{Dem90,Dem12}) that, if $X$ is compact, the following are equivalent for a line holomorphic bundle $L\longrightarrow X$; 
$L$ is big, $\mathrm{Vol}_X(L) > 0$, and $L$ admits a singular Hermitian metric which is singular positive on $X$. 

\section{exhaustion positivity}\label{Section: exhaustion positivity}

In this section, we provide the proofs of the following theorem and Theorem \ref{Theorem: exh sing positivity on tlX twisted by psef}.

\begin{theorem}\label{Theorem: exh sing positivity on tlX}
    Let $X$ be a weakly pseudoconvex complex space, $\pi:\tx\longrightarrow X$ be a canonical desingularization and $L\longrightarrow X$ be a holomorphic line bundle. 
    If $L$ is positive, then the pulled-back line bundle $\pi^*L\longrightarrow\tx$ is exhaustion singular-positive with $\cal{O}_{\tx}$. 
\end{theorem}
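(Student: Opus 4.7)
The plan is to lift the setting to $\tx$ and construct, for each member of an exhaustion, a singular positive metric on $\pi^*L$ with trivial multiplier ideal sheaf by combining the pulled back smooth positive metric on $L$ with the Negativity Lemma. First pull back the smooth plurisubharmonic exhaustion $\varPsi$ to $\tl{\varPsi}:=\varPsi\circ\pi$ on $\tx$; since $\pi$ is proper, $\tx_c:=\{\tl{\varPsi}<c\}=\pi^{-1}(X_c)$ is relatively compact, so $\tx$ is weakly pseudoconvex. Fix a smooth positive Hermitian metric $h_L$ on $L$. The pullback $\pi^*h_L$ is smooth on $\pi^*L$ with semi-positive curvature $\pi^*\iO{L,h_L}$, which is strictly positive on $\tx\setminus E$ and degenerates along the exceptional divisor $E$. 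Choose any strictly increasing sequence $c_j\to+\infty$; it suffices to construct, for each $c=c_j$, a singular Hermitian metric $h_c$ on $\pi^*L|_{\tx_c}$ that is singular positive and satisfies $\scr{I}(h_c)=\cal{O}_{\tx}$.

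Apply the Negativity Lemma (Lemma \ref{Lemma: Negativity lemma}) to the relatively compact open set $V=X_c$ to obtain an open neighborhood $\tl{U}$ of $E$, an effective divisor $E_c=\sum_{j\in J_c}b_jE_j$ with $b_j\in\bb{N}$, and a smooth Hermitian metric $h_{-E_c}$ on $\cal{O}_{\tx}(-E_c)$ over $\tx_c$ whose curvature is strictly positive on $\tl{U}\cap\tx_c$. For $k\in\bb{N}$, set $A_k:=\pi^*L^{\otimes k}\otimes\cal{O}_{\tx}(-E_c)$ with smooth metric $H_k:=\pi^*h_L^{\otimes k}\otimes h_{-E_c}$; its curvature is $k\pi^*\iO{L,h_L}+\iO{\cal{O}(-E_c),h_{-E_c}}$. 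On the compact set $\overline{\tx_c}\setminus\tl{U}$, which is disjoint from $E$, $\pi^*\iO{L,h_L}\geq\varepsilon\omega$ for some Hermitian form $\omega$ on $\tx$ and $\varepsilon>0$, while the second term is uniformly bounded; on $\tl{U}\cap\tx_c$ the second term is strictly positive and the first is semi-positive. Hence for $k$ sufficiently large, $\iO{A_k,H_k}\geq\omega_0$ on all of $\tx_c$ for some smooth positive $(1,1)$-form $\omega_0$. Let $s_{E_c}$ be the canonical holomorphic section of $\cal{O}_{\tx}(E_c)$ with zero divisor $E_c$ and let $h_{E_c}^{\rom{sing}}$ be the singular metric on $\cal{O}_{\tx}(E_c)$ normalised by $|s_{E_c}|^2_{h_{E_c}^{\rom{sing}}}\equiv 1$, so that $\iO{\cal{O}(E_c),h_{E_c}^{\rom{sing}}}=[E_c]$ by Poincar\'e--Lelong. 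Then $\tl{h}_k:=H_k\otimes h_{E_c}^{\rom{sing}}$ is a singular metric on $\pi^*L^{\otimes k}$ with curvature $\iO{A_k,H_k}+[E_c]\geq\omega_0$, and $h_c:=\tl{h}_k^{1/k}$ defines a singular metric on $\pi^*L|_{\tx_c}$ with $\iO{\pi^*L,h_c}\geq\tfrac{1}{k}\omega_0>0$; thus $h_c$ is singular positive.

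It remains to verify $\scr{I}(h_c)=\cal{O}_{\tx}$ and assemble the sequence. In a local chart on $\tx$ where $E_c$ has simple normal crossing form $\{z_1^{b_{i_1}}\cdots z_r^{b_{i_r}}=0\}$, the local weight of $h_c$ is $\frac{1}{k}\bigl(\varphi_{\rom{smooth}}+b_{i_1}\log|z_1|+\cdots+b_{i_r}\log|z_r|\bigr)$, so local integrability of $e^{-2\varphi}$ reduces to the condition $b_{i_\ell}/k<1$ for all $\ell$. Further enlarging $k$ so that $k>\max_{j\in J_c}b_j$, which is possible since $E_c$ has only finitely many components on the relatively compact $\tx_c$, forces $\scr{I}(h_c)=\cal{O}_{\tx}$ on $\tx_c$. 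Performing this construction for each $c_j$ gives the required sequence $\{h_{c_j}\}$. The main technical difficulty is the simultaneous demand of strict positivity on all of $\tx_c$ \emph{and} triviality of $\scr{I}(h_c)$: the Negativity Lemma yields positivity only near $E$, whereas $\pi^*h_L$ degenerates on $E$; we tensor at a high power $k$ to overpower both degeneracies, then take the $k$-th root both to descend to a metric on $\pi^*L$ (rather than $\pi^*L^{\otimes k}$) and to rescale the Lelong numbers along $E_c$ strictly below $1$.
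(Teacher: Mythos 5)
Your construction is essentially the paper's proof: writing $h_c=\bigl(\pi^*h_L^{\otimes k}\otimes h_{-E_c}\otimes h_{E_c}^{\rom{sing}}\bigr)^{1/k}$ is exactly $\pi^*h\cdot e^{-\varepsilon\psi}$ with $\varepsilon=1/k$ and $\psi=-\log\bigl(h_{-E_c}\otimes h_{E_c}^{\rom{sing}}\bigr)$, which is precisely the weight produced in the paper's Lemma~\ref{Lemma: key lemma} and Proposition~\ref{Proposition: singular positivity on tlX_c by using SOP}; your direct integrability check replaces the paper's appeal to Skoda's Lelong-number criterion, but is equivalent. The one imprecision is that you apply the Negativity Lemma to $V=X_c$ itself and then claim uniform curvature bounds on $\overline{\tx_c}$ and uniform positivity on $\tl{U}\cap\tx_c$, which is not justified since $h_{-E_c}$ is only defined on $\tx_c$ and may degenerate at $\partial\tx_c$; the paper avoids this by applying the lemma on a strictly larger sublevel set $\tl{X}_{c+2\tau}$ and asserting the estimates only on $\tl{X}_{c+\tau}$ (or $\tl{X}_c$), and you should do the same.
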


For this purpose, we first present a useful lemma.

\begin{lemma}\label{Lemma: key lemma}
    Let $X$ be a complex space, $\pi:\widetilde{X}\longrightarrow X$ be a canonical desingularization and $E:=\pi^{-1}(\xs)$ be the $\pi$-exceptional set which is simple normal crossing. 
    Let $V$ be a relatively compact open subsets of $X$, and set $\widetilde{V}:=\pi^{-1}(V)$.
    Then there exists a quasi-plurisubharmonic function $\psi:\widetilde{V}\longrightarrow[-\infty,+\infty)$ which is smooth on $\widetilde{V}\setminus E$, and whose decomposition $\psi=\psi_{\rom{sm}}+\psi_{\rom{psh}}$ in a neighborhood of $E$ satisfies that the smooth part $\psi_{\rom{sm}}$ has a Levi form $\idd\psi_{\rom{sm}}$ compensating for the loss of positivity of the pullback $\pi^*\omega$ on $\tl{V}$, where $\pi^*\omega$ degenerates along $E$ for any Hermitian metric $\omega$ on $X$.
    Hence, by the relative compactness of $V$, there exists $\varepsilon_V>0$ such that $\pi^*\omega+\varepsilon\idd\psi$ is strictly positive on $\tl{V}$ in the sense of currents for any $0<\varepsilon<\varepsilon_V$; in other words, there exists a Hermitian metric $\gamma_\varepsilon$ on $\tx$ such that 
    \begin{align*}
        \pi^*\omega+\varepsilon\idd\psi\geq\gamma_\varepsilon
    \end{align*}
    on $\tl{V}$ in the sense of currents.
    In particular, we have $\psi\in L^1_{loc}(\widetilde{V})$.
\end{lemma}

\begin{proof}
    By Negativity Lemma \ref{Lemma: Negativity lemma}, there exist an effective divisor $E_V=\sum_{j\in J_V}b_jE_j$ with $b_j\in\bb{N}$ and a smooth Hermitian metric $h^*_{E_V}$ on $\cal{O}_{\tx}(-E_V)|_{\tl{V}}$ such that the following holds: 
    for any Hermitian metric $\omega$ on $X$, there exists $\varepsilon_V>0$ such that 
    \begin{align*}
        \gamma_\varepsilon:=\pi^*\omega+\varepsilon\,\iO{\cal{O}_{\tx}(-E_V),h^*_{E_V}}>0
    \end{align*}
    on $\tl{V}$, for any $0<\varepsilon<\varepsilon_V$. 
    Let $\sigma_j$ be the defining section of $\cal{O}_{\tx}(-E_j)$ for each $j\in J_V$, and set $\sigma_V:=\bigotimes_{j\in J_V}\sigma_j^{b_j}$ as a section of $\cal{O}_{\tx}(E_V)$. 
    Then, we define the natural singular Hermitian metric $1/|\sigma_V|^2$ on $\cal{O}_{\tx}(E_V)$ by $1/|\sigma_V|^2:=h_{sm}/|\sigma_V|^2_{h_{sm}}$ for some smooth Hermitian metric $h_{sm}$ on $\cal{O}_{\tx}(E_V)$.
    It follows immediately that the metric $1/|\sigma_V|^2$ is singular semi-positive on $\tx$.

    Thus, the singular metric $h^*_{E_V}\otimes 1/|\sigma_V|^2$ on the trivial bundle $\cal{O}_{\tx}$ can be regarded as a function. 
    In particular, the weight function $\psi:=-\log (h^*_{E_V}\otimes 1/|\sigma_V|^2)$ is a globally defined locally integrable function on $\tl{V}$, i.e., $\psi\in L^1_{loc}(\tl{V})$.
    Locally, a computation yields 
    \begin{align*}
        \idd\psi=-\idd\log h^*_{E_V}+\idd\log|\sigma_V|^2\geq\iO{\cal{O}_{\tx}(-E_V),h^*_{E_V}}
    \end{align*}
    in the sense of currents. Since $\log |\sigma_V|$ is locally plurisubharmonic, the function $\psi$ is quasi-plurisubharmonic and smooth on $\tl{V}\setminus E$. 
    Hence, $\psi$ is the desired function and satisfies $\pi^*\omega+\varepsilon\idd\psi\geq\gamma_\varepsilon$ on $\tl{V}$ in the sense of currents.
\end{proof}

Theorem \ref{Theorem: exh sing positivity on tlX} follows immediately from the following proposition.

\begin{proposition}\label{Proposition: singular positivity on tlX_c by using SOP}
    Let $(X,\varPsi)$ be a weakly pseudoconvex complex space, 
    $\pi:\widetilde{X}\longrightarrow X$ be a canonical desingularization and $E:=\pi^{-1}(\xs)$ be a $\pi$-exceptional set which is simple normal crossing.
    Let $L\longrightarrow X$ be a holomorphic line bundle, 
    and take 
    $c>\inf_X\varPsi$. 
    
    For some $\tau>0$, if there exists a smooth Hermitian metric $h$ on $L|_{X_{c+\tau}}$ whose curvature is positive on $X_{c+\tau}$,
    then there exists a singular Hermitian metric $\tl{h}_c$ on $\pi^*L|_{\tl{X}_{c+\tau}}$ such that $\tl{h}_c$ is smooth on $\tl{X}_{c+\tau}\setminus E$, $\scr{I}(\tl{h}_c)=\cal{O}_{\tx}$ on $\tl{X}_c$ and $\tl{h}_c$ is singular positive on $\tx_c$, i.e., $\iO{\pi^*L,\tl{h}_c}\geq\gamma$ on $\tl{X}_c$ in the sense of currents for some Hermitian metric $\gamma>0$.
\end{proposition}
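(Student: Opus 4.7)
The plan is to construct $\tl{h}_c$ as a small plurisubharmonic perturbation of the pullback metric $\pi^*h$, with the perturbation supplied by Lemma \ref{Lemma: key lemma}. Applying that lemma to the relatively compact open set $V:=X_{c+\tau}$, I would obtain an open neighborhood $\tl{U}$ of $E$ inside $\tl{V}:=\tx_{c+\tau}$ and a function $\psi\in L^1_{loc}(\tl{V})$ that is smooth on $\tl{V}\setminus E$, strictly plurisubharmonic on $\tl{U}\cap\tl{V}$, and tends to $-\infty$ along $E$. For a parameter $\varepsilon>0$ to be fixed, I would set
\begin{equation*}
    \tl{h}_c:=(\pi^*h)\cdot e^{-2\varepsilon\psi},
\end{equation*}
i.e.\ the singular Hermitian metric on $\pi^*L|_{\tl{V}}$ whose local weight equals $\pi^*\varphi_h+\varepsilon\psi$, where $\varphi_h$ denotes the local weight of $h$. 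Smoothness of $\tl{h}_c$ on $\tl{V}\setminus E$ is then automatic.

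Next I would establish positivity on $\tx_c$ via the curvature identity
\begin{equation*}
    \iO{\pi^*L,\tl{h}_c}=\pi^*\iO{L,h}+2\varepsilon\,\idd\psi
\end{equation*}
in the sense of currents. On $\tl{U}\cap\tl{V}$ the first term is semi-positive while the second dominates a fixed positive smooth $(1,1)$-form by strict plurisubharmonicity of $\psi$. On the complement $\tx_c\setminus\tl{U}$, which is a compact subset disjoint from $E$, the map $\pi$ is a biholomorphism so $\pi^*\iO{L,h}$ is bounded below by a positive Hermitian form, while $\idd\psi$ is uniformly bounded since $\psi$ is smooth there. Choosing $\varepsilon$ sufficiently small would therefore give $\iO{\pi^*L,\tl{h}_c}\geq\gamma$ on $\tx_c$ for some Hermitian metric $\gamma>0$.

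It remains to verify $\scr{I}(\tl{h}_c)=\cal{O}_{\tx}$ on $\tx_c$. Since $\pi^*\varphi_h$ is smooth, this reduces to $\scr{I}(\varepsilon\psi)=\cal{O}_{\tx}$ there. From the explicit formula $\psi=-\log h^*_{E_V}+\log|\sigma_V|^2$ appearing in the proof of Lemma \ref{Lemma: key lemma}, with $\sigma_V=\bigotimes_{j\in J_V}\sigma_j^{b_j}$ and $E|_{\tl{V}}$ simple normal crossing, the Lelong number of $\varepsilon\psi$ at any point of $\tx_c$ is bounded above by $2\varepsilon\cdot n\cdot\max_{j\in J_V}b_j$. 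Since $V$ is relatively compact the index set $J_V$ is finite, so shrinking $\varepsilon$ further makes this bound strictly less than $1$ everywhere on $\tx_c$, and Skoda's integrability theorem then yields triviality of the multiplier ideal. The main technical point is to select a single $\varepsilon$ satisfying both requirements simultaneously, but because each is an upper-bound condition determined by finitely many quantities on the relatively compact $\tx_c$, a compatible choice always exists; this is precisely the step where the relative compactness of $V=X_{c+\tau}$, hidden inside the application of the Negativity Lemma, becomes essential.
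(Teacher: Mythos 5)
Your construction follows essentially the paper's own proof: apply Lemma \ref{Lemma: key lemma}, form $\pi^*h\cdot e^{-\varepsilon\psi}$, and choose $\varepsilon$ small enough to keep both the curvature strictly positive and the Lelong numbers below Skoda's threshold; the only cosmetic differences are that the paper applies the lemma with $V=X_{c+2\tau}$ and bounds the Lelong numbers by $M_{\psi,c}:=\max_{x\in\overline{\tl{X}}_{c+\tau}}\nu(\psi,x)$ rather than via your explicit SNC estimate. One small imprecision to fix: $\tx_c\setminus\tl{U}$ is only closed in the open set $\tx_c$ and need not be compact, so you should carry out the uniform estimate on $\idd\psi$ over $\overline{\tx_c}\setminus\tl{U}$, which is compact and contained in $\tl{X}_{c+\tau}\setminus E$.
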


\begin{proof}
    For the relatively compact subset $\tl{X}_{c+2\tau}$, applying Lemma \ref{Lemma: key lemma}, there exists a quasi-plurisubharmonic function $\psi:\tl{X}_{c+2\tau}\longrightarrow[-\infty,+\infty)$ satisfying the conditions of Lemma \ref{Lemma: key lemma}. 
    Here, for a smooth Hermitian metric $\pi^*h$ on $\pi^*L$, the curvature $\iO{\pi^*L,\pi^*h}=\pi^*\iO{L,h}$ is positive on $\tx\setminus E$ and semi-positive on $\tx$. 
    From the relative compact-ness of $\tl{X}_{c+\tau}$, there exists an integer $m_c\in\bb{N}$ such that for any integer $m\geq m_c$, the singular Hermitian metric $\tl{h}_m:=\pi^*h^m\cdot e^{-\psi}$ on $\pi^*L^{\otimes m}|_{\tl{X}_{c+2\tau}}$ is singular positive on $\tl{X}_{c+\tau}$, i.e., 
    \begin{align*}
        \iO{\pi^*L^{\otimes m},\tl{h}_m}=m\cdot\pi^*\iO{L,h}+\idd\psi\geq m_c\cdot\pi^*\iO{L,h}+\idd\psi\geq\tl{\gamma},
    \end{align*}
    in the sense of currents on $\tl{X}_{c+\tau}$ for some Hermitian metric $\tl{\gamma}>0$.
    In other words, by Lemma \ref{Lemma: key lemma}, for any positive number $0<\varepsilon<1/m_c$, the singular Hermitian metric $\tl{h}_\varepsilon:=\pi^*h\cdot e^{-\varepsilon\psi}$ on $\pi^*L|_{\tl{X}_{c+2\tau}}$ is singular positive on $\tl{X}_{c+\tau}$.

    By Skoda's result \cite{Sko72}, if $\nu(-\log \widetilde{h}_\varepsilon,x)<2$ then $\scr{I}(\tl{h}_\varepsilon)=\cal{O}_{\tx,x}$ immediately. 
    From the equality $\nu(-\log\tl{h}_\varepsilon,x)=\nu(\varepsilon\psi,x)=\varepsilon\nu(\psi,x)$,
    let $M_{\psi,c}:=\max_{x\in\overline{\tl{X}}_{c+\tau}}\nu(\psi,x)$. 
    Thus, by choosing $\varepsilon_c>0$ such that $\varepsilon_c < \min\{1/m_c, 2/M_{\psi,c}\}$, the desired singular Hermitian metric $\tl{h}_c := \tl{h}_{\varepsilon_c}$ on $\pi^*L|_{\tl{X}_{c+\tau}}$ is obtained. 
\end{proof}

Furthermore, to prove the global embedding theorem, the following theorem is proved by using the strong openness property ($=$ Theorem \ref{Theorem: strong openness property}, see \cite{GZ15}).

\begin{theorem}\label{Theorem: exh sing positivity on tlX twisted by psef}
    Let $X$ be a weakly pseudoconvex complex space, $\pi:\tx\longrightarrow X$ be a canonical desingularization and $L\longrightarrow X$ be a holomorphic line bundle. 
    Let $P\longrightarrow \tx$ be a holomorphic line bundle equipped with a singular Hermitian metric $h_P$.
    If $L$ is positive and $h_P$ is singular semi-positive on $\tx$, then the holomorphic line bundle $P\otimes\pi^*L$ is exhaustion singular-positive with $\scr{I}(h_P)$. 

    In particular, we can construct a sequence of singular Hermitian metrics $\{\tl{h}_j\}_{j\in\mathbb{N}}$ defining $\pi^*L$ to be exhaustion singular-positive with $\cal{O}_{\tl{X}}$, 
    such that the sequence $\{\tl{h}_j \otimes h_P\}_{j\in\mathbb{N}}$ 
    defines $\pi^*L\otimes P$ to be exhaustion singular-positive with $\scr{I}(h_P)$.
\end{theorem}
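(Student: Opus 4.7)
The plan is to repeat the construction of Proposition \ref{Proposition: singular positivity on tlX_c by using SOP}, but to control the perturbation parameter so finely that tensoring with $h_P$ preserves the multiplier ideal sheaf $\scr{I}(h_P)$. Fix an exhausting sequence $c_1<c_2<\cdots\nearrow+\infty$. Since $L$ is positive on $X$, for each $j\in\bb{N}$ there exist $\tau_j>0$ and a smooth positive Hermitian metric $h^{(j)}$ on $L|_{X_{c_j+\tau_j}}$; applying Lemma \ref{Lemma: key lemma} on $\tl{X}_{c_j+\tau_j}$ produces a function $\psi_j$ which is smooth on $\tl{X}_{c_j+\tau_j}\setminus E$, strictly plurisubharmonic on a neighborhood $\tl{U}_j$ of $E$, and tends to $-\infty$ along $E$. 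Set $\tl{h}_{j,\varepsilon}:=\pi^*h^{(j)}\cdot e^{-\varepsilon\psi_j}$. By Proposition \ref{Proposition: singular positivity on tlX_c by using SOP}, there is a threshold $\varepsilon_j^{(1)}>0$ such that for every $0<\varepsilon\le\varepsilon_j^{(1)}$ the metric $\tl{h}_{j,\varepsilon}$ is singular positive on $\tl{X}_{c_j}$ with $\iO{\pi^*L,\tl{h}_{j,\varepsilon}}\ge\gamma_j>0$ for some Hermitian form $\gamma_j$, and $\scr{I}(\tl{h}_{j,\varepsilon})=\cal{O}_{\tl{X}}$ on $\tl{X}_{c_j}$.

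The curvature of $\tl{h}_{j,\varepsilon}\otimes h_P$ equals $\iO{\pi^*L,\tl{h}_{j,\varepsilon}}+\iO{P,h_P}\ge\gamma_j>0$ on $\tl{X}_{c_j}$, so the tensor metric is automatically singular positive there. The delicate point is the multiplier ideal identity $\scr{I}(\tl{h}_{j,\varepsilon}\otimes h_P)=\scr{I}(h_P)$ on $\tl{X}_{c_j}$ for small $\varepsilon$. Since $\tl{h}_{j,\varepsilon}$ is smooth on $\tl{X}\setminus E$, this is immediate on $\tl{X}_{c_j}\setminus E$. Near a point of $E$, in a local trivialization the weight of $\tl{h}_{j,\varepsilon}\otimes h_P$ equals $\varphi_{h_P}+\tfrac{\varepsilon}{2}\psi_j$ modulo a smooth summand coming from $\pi^*h^{(j)}$, and smooth summands do not affect multiplier ideals, so it suffices to establish $\scr{I}(\varphi_{h_P}+\tfrac{\varepsilon}{2}\psi_j)=\scr{I}(\varphi_{h_P})$ on a small polydisc for sufficiently small $\varepsilon$. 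Restricting to such a polydisc and subtracting constants to make both $\varphi_{h_P}$ and $\psi_j$ negative plurisubharmonic, the Strong Openness Property (Theorem \ref{Theorem: strong openness property}) yields
\begin{align*}
    \scr{I}(\varphi_{h_P})=\bigcup_{\varepsilon>0}\scr{I}\!\left(\varphi_{h_P}+\tfrac{\varepsilon}{2}\psi_j\right).
\end{align*}

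The main obstacle is promoting this local statement to a single $\varepsilon$ valid uniformly on the relatively compact set $\overline{\tl{X}_{c_j}}$. The family $\scr{I}(\varphi_{h_P}+\tfrac{\varepsilon}{2}\psi_j)$ is a monotonically increasing family of coherent subsheaves of $\scr{I}(\varphi_{h_P})$ as $\varepsilon\to0^+$; strong openness forces equality at each stalk for $\varepsilon$ small enough, coherence of the quotient propagates the equality to a neighborhood, and monotonicity makes the equality persist for all smaller $\varepsilon$. A finite subcover of $\overline{\tl{X}_{c_j}}$ by such neighborhoods then produces a uniform threshold $\varepsilon_j^{(2)}>0$. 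Setting $\varepsilon_j:=\min\{\varepsilon_j^{(1)},\varepsilon_j^{(2)}\}$ and $\tl{h}_j:=\tl{h}_{j,\varepsilon_j}$, the sequence $\{\tl{h}_j\}_{j\in\bb{N}}$ simultaneously witnesses $\pi^*L$ as exhaustion singular-positive with $\cal{O}_{\tl{X}}$, and $\{\tl{h}_j\otimes h_P\}_{j\in\bb{N}}$ witnesses $\pi^*L\otimes P$ as exhaustion singular-positive with $\scr{I}(h_P)$, establishing both the main assertion and the \emph{in particular} clause at once.
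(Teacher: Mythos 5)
Your proposal follows essentially the same route as the paper: both start from the perturbation $\pi^*h\cdot e^{-\varepsilon\psi}$ produced by Lemma \ref{Lemma: key lemma} (via Proposition \ref{Proposition: singular positivity on tlX_c by using SOP}), observe that tensoring with $h_P$ preserves singular positivity, reduce the multiplier-ideal identity to $\scr{I}(\varphi_{h_P}+\varepsilon\psi)=\scr{I}(\varphi_{h_P})$, and then apply the strong openness property together with relative compactness to extract a uniform $\varepsilon$ on $\overline{\tl{X}_{c_j}}$. The only cosmetic difference is the last step: where the paper directly invokes the strong Noetherian property of coherent sheaves, you re-derive it in situ (stalk-wise stabilization from Noetherianity of $\cal{O}_{\tl{X},x}$, propagation to a neighborhood via coherence and closedness of the support of the quotient, finite subcover), which is exactly the content of that property; just be explicit that the stalk-level stabilization uses Noetherianity of the local ring, not strong openness alone.
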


\begin{proof}
    For any constant $c>\inf_X\varPsi$, it suffices to construct a singular Hermitian metric $\tl{h}_c$ of $\pi^*L$ on $\tl{X}_c$ which is singular positive on $\tl{X}_c$ and satisfies $\scr{I}(\tl{h}_c)=\cal{O}_{\tx}$ and $\scr{I}(\tl{h}_c\otimes h_P)=\scr{I}(h_P)$ on $\tl{X}_c$.

    For some $\tau > 0$, similarly to the proof of Proposition \ref{Proposition: singular positivity on tlX_c by using SOP}, there exist a quasi-plurisubharmonic function $\psi: \tl{X}_{c+2\tau}\longrightarrow [-\infty,+\infty)$ and an integer $m_c \in \mathbb{N}$ such that for any $0 < \varepsilon < 1/m_c$, 
    the singular Hermitian metric $\tl{h}_{\varepsilon}:=\pi^*h\cdot e^{-2\varepsilon\psi}$ on $\pi^*L|_{\tl{X}_{c+2\tau}}$ is singular positive on $\tl{X}_{c+\tau}$. In particular, $\tl{h}_{\varepsilon}\otimes h_P$ is also singular positive on $\tl{X}_{c+\tau}$. 

    Let $H_{sm}$ be a smooth Hermitian metric on $P$, then $\tl{\varphi}:=-\frac{1}{2}\log(h_P/H_{sm})$ is a locally integrable function on $\tl{X}$, i.e., $\tl{\varphi}\in L^1_{loc}(\tx)$. 
    By the singular semi-positivity of $h_P$, there exists a quasi-plurisubharmonic function $\varphi$ on $\tx$ such that $\tl{\varphi}=\varphi$ almost everywhere. 
    Here, we can replace $h_P$ by $h_P:=H_{sm}\cdot e^{-2\varphi}$ without loss of generality, and we have $\scr{I}(\tl{h}_\varepsilon)=\scr{I}(\varepsilon\psi)$ and $\scr{I}(\tl{h}_\varepsilon\otimes h_P)=\scr{I}(\varphi+\varepsilon\psi)$ on $\tl{X}_{c+2\tau}$, and $\scr{I}(h_P)=\scr{I}(\varphi)$ on $\tx$.

    By the strong openness property (=\,Theorem \ref{Theorem: strong openness property}), we have $\scr{I}(\varphi)=\bigcup_{0<\delta<1/m_c}\scr{I}(\varphi+\delta\psi)$ on $\tl{X}_{c+\tau}$. 
    Therefore, by the relative compact-ness of $\tl{X}_c$ and the strong Noetherian property of coherent sheaves (see \cite[Chapter\,II, (3.22)]{Dem-book}), there exists $0<\delta_c<1/m_c$ such that $\bigcup_{0<\delta<1/m_c}\scr{I}(\varphi+\delta\psi)=\scr{I}(\varphi+\delta_c\psi)$ on $\tl{X}_c$.  
    Hence, for any $0<\varepsilon\leq\delta_c$, we have $\scr{I}(\varphi)=\scr{I}(\varphi+\varepsilon\psi)$, i.e., $\scr{I}(\tl{h}_\varepsilon\otimes h_P)=\scr{I}(h_P)$, on $\tl{X}_c$. 
    Let $M_{\psi,c}:=\max_{x\in\overline{\tl{X}}_{c+\tau}}\nu(\psi,x)$. 
    Similarly to the proof of Proposition \ref{Proposition: singular positivity on tlX_c by using SOP}, by choosing $\varepsilon_c>0$ such that $\varepsilon_c<\min\{\delta_c,1/M_{\psi,c}\}$, 
    the desired singular Hermitian metric $\tl{h}_c:=\tl{h}_{\varepsilon_c}=\pi^*h\cdot e^{-2\varepsilon_c\psi}$ on $\pi^*L|_{\tl{X}_{c+\tau}}$ is obtained.
\end{proof}

\begin{remark}
    The sequence of singular metrics $\{\tl{h}_j\}_{j \in \mathbb{N}}$ that makes the line bundle $\pi^*L$ exhaustion positive as obtained in Theorem \ref{Theorem: exh sing positivity on tlX} can, in general, be constructed such that $\tl{h}_{j+1}|_{\tl{X}_j}\preccurlyeq\tl{h}_j$, i.e., there exists a constant $C_{j+1}>0$ such that $\tl{h}_{j+1} \leq C_{j+1} \tl{h}_j$ on ${\tl{X}_j}$.
\end{remark}

\begin{proof}
    We construct the sequence inductively. Fix $\gamma\in\mathbb{N}$. Assume $\tl{h}_\gamma$ has been constructed as in Proposition \ref{Proposition: singular positivity on tlX_c by using SOP}. 
    Then we construct $\tl{h}_{\gamma+1}$ such that $\tl{h}_{\gamma+1}|_{\tl{X}_\gamma}\preccurlyeq\tl{h}_\gamma$ holds.

    The singular Hermitian metric $\tl{h}_\gamma$ was constructed as follows:
    By Negativity Lemma \ref{Lemma: Negativity lemma}, there exist an effective divisor $E_{X_\gamma}=\sum_{j\in J_\gamma}b_jE_j$ with $b_j\in\bb{N}$ and a smooth Hermitian metric $h^*_{E_{X_\gamma}}$ on $\cal{O}_{\tx}(-E_{X_\gamma})|_{X_{\gamma+1/2}}$ whose curvature 
    compensates for the loss of positivity of the pull-back Hermitian metric along $E$.
    Here $h^*_{E_{X_\gamma}}=\bigotimes_{j\in J_\gamma}{h^*_{E_j}}^{\!\!\otimes b_j}$, and the line bundle $\cal{O}_{\tx}(-E_j)|_{E_j\cap \tl{X}_{\gamma+1/2}}$ coincides with the tautological line bundle of the normal bundle, and each $h^*_{E_j}$ is the smooth metric defined on $\tl{X}_{\gamma+1/2}$ and induced by the Fubini-Study metric on that bundle. 
    Let $\sigma_j$ be the defining section of $\cal{O}_{\tx}(-E_j)$ for each $j\in J_\gamma$, and set $\sigma_{E_\gamma}:=\bigotimes_{j\in J_\gamma}\sigma_j^{b_j}$ as a section of $\cal{O}_{\tx}(E_{X_\gamma})$. 
    Let $\psi_\gamma:=-\log(h^*_{E_{X_\gamma}}\otimes 1/|\sigma_{E_\gamma}|^2)$ and $M_{\psi_\gamma}:=\max_{x\in\overline{\tl{X}}_{\gamma+1/2}}\nu(\psi_\gamma,x)$. Here, there exists $m_\gamma\in\bb{N}$ such that $\tl{h}_{\gamma,\varepsilon}:=\pi^*h\cdot e^{-\varepsilon\psi_\gamma}$ is singular positive on $\tl{X}_{\gamma+1/2}$ for any $0<\varepsilon<1/m_\gamma$. 
    Thus, we choose $\varepsilon_\gamma>0$ such that $\varepsilon_\gamma < \min\{1/m_\gamma,2/M_{\psi_\gamma}\}$, and $\tl{h}_\gamma$ is then constructed by $h_\gamma := \pi^*h\cdot e^{-\varepsilon_\gamma\psi_\gamma}$.

    We then construct the singular Hermitian metric $\tl{h}_{\gamma+1}$.
    By Negativity Lemma \ref{Lemma: Negativity lemma}, as above, there exist an effective divisor $E_{X_{\gamma+1}}=\sum_{j\in J_{\gamma+1}}\beta_jE_j$ with $\beta_j\in\bb{N}$ and a smooth Hermitian metric $\hbar^*_{E_{X_{\gamma+1}}}$ on $\cal{O}_{\tx}(-E_{X_{\gamma+1}})|_{X_{\gamma+3/2}}$ whose curvature compensates for the loss of positivity of the pull-back Hermitian metric along $E$.
    Here $\hbar^*_{E_{X_\gamma}}=\bigotimes_{j\in J_{\gamma+1}}{\hbar^*_{E_j}}^{\!\!\otimes \beta_j}$, and since $\pi$ is canonical (see Theorem \ref{Theorem: canonical desingularization}), for each $j \in \{\ell\in J_\gamma\mid E_\ell\cap(\tl{X}_{\gamma+3/2}-\overline{\tl{X}}_{\gamma+1/2})\ne\emptyset\}$, the restriction of $\mathcal{O}(-E_j)|_{\tl{X}_{\gamma+3/2}}$ to $\tl{X}_{\gamma+1/2}$ coincides with $\mathcal{O}(-E_j)|_{\tl{X}_{\gamma+1/2}}$, which provides $\hbar_{E_j}$ as a natural extension of $h_{E_j}$, i.e., $\hbar_{E_j}|_{\tl{X}_{\gamma+1/2}}=h_{E_j}$.
    From the proof of the Negative Lemma \ref{Lemma: Negativity lemma} (see \cite[Lemma 3.6]{Wat24}), for any $\beta_j$ with $j \in J_\gamma$, there exists an integer $\kappa_\gamma$ such that $\beta_j = \kappa_\gamma b_j$ for all $j \in J_\gamma$. Hence, we have that $E_{X_{\gamma+1}}|_{\tl{X}_{\gamma+1/2}}=\kappa_\gamma E_{X_\gamma}$ and $\hbar^*_{E_{X_{\gamma+1}}}|_{\tl{X}_{\gamma+1/2}}={h^*_{E_{X_\gamma}}}^{\!\!\!\!\!\!\otimes\kappa_\gamma}$.

    Let $\sigma_j$ be the defining section of $\cal{O}_{\tx}(-E_j)$ for each $j\in J_{\gamma+1}$, and set $\sigma_{E_{\gamma+1}}:=\bigotimes_{j\in J_{\gamma+1}}\sigma_j^{\beta_j}$ as a section of $\cal{O}_{\tx}(E_{X_{\gamma+1}})$. 
    Let $\psi_{\gamma+1}:=-\log(\hbar^*_{E_{X_{\gamma+1}}}\otimes 1/|\sigma_{E_{\gamma+1}}|^2)$ and $M_{\psi_{\gamma+1}}:=\max_{x\in\overline{\tl{X}}_{\gamma+3/2}}\nu(\psi_{\gamma+1},x)$, then we obtain $\psi_{\gamma+1}|_{\tl{X}_{\gamma+1/2}}=\kappa_\gamma\psi_\gamma$. 
    Here, there exists $m_{\gamma+1}\in\bb{N}$ such that $\kappa_\gamma m_\gamma\leq m_{\gamma+1}$ and $\tl{h}_{\gamma+1,\varepsilon}:=\pi^*h\cdot e^{-\varepsilon\psi_{\gamma+1}}$ is singular positive on $\tl{X}_{\gamma+3/2}$ for any $0<\varepsilon<1/m_{\gamma+1}$. 
    Thus, we choose $\varepsilon_{\gamma+1}>0$ such that $\varepsilon_{\gamma+1} < \min\{1/m_{\gamma+1},2/M_{\psi_{\gamma+1}},\varepsilon_\gamma/\kappa_\gamma\}$, and the desired singular Hermitian metric $\tl{h}_{\gamma+1}$ is then constructed by $h_{\gamma+1} := \pi^*h\cdot e^{-\varepsilon_{\gamma+1}\psi_{\gamma+1}}$. 

    In fact, the ordering of singular Hermitian metrics is determined by the rate at which their weight functions tend to $-\infty$. 
    Hence, we may assume $\psi_\tau < 0$ with $\tau=\gamma,\gamma+1$, and the construction yields the inequality 
    \begin{align*}
        0>\varepsilon_{\gamma+1}\psi_{\gamma+1}|_{\tl{X}_{\gamma+1/2}}=\varepsilon_{\gamma+1}\kappa_p\psi_\gamma>\varepsilon_\gamma\psi_\gamma,
    \end{align*}
    which expresses $\tl{h}_{j+1}|_{\tl{X}_j}\preccurlyeq\tl{h}_j$.
\end{proof}

\begin{example}\label{Example: more singular example}
    Let $(X,\varPsi)$ be a weakly pseudoconvex complex space and $L\longrightarrow X$ be a positive line bundle.
    If there exists an increasing sequence of real numbers $\{c_j\}_{j\in\mathbb{N}}$ diverging to $+\infty$ such that $\partial X_{c_j}\cap \xs=\emptyset$ for any $j\in\mathbb{N}$, 
    then there exists a sequence of singular Hermitian metrics $\{\tl{h}_j\}_{j\in\bb{N}}$ defining the exhaustion singular-positive with $\cal{O}_{\tl{X}}$ of $\pi^*L$, 
    which becomes progressively more singular, i.e., $\tl{h}_j\preccurlyeq \tl{h}_{j+1}|_{\tl{X}_{c_j}}$ for any $j\in \bb{N}$. 
    
    In particular, the sequence can also be arranged to be equivalent with respect to singularities, that is, $\tl{h}_j\sim \tl{h}_{j+1}|_{\tl{X}_{c_j}}$ for any $j\in \bb{N}$. 
    In this case, we can construct a globally defined singular Hermitian metric $\tl{h}$ on $\pi^*L$ which is singular positive on $\tx$ and satisfies $\scr{I}(\tl{h})=\cal{O}_{\tx}$ on $\tl{X}$.
\end{example}

\begin{proof}
    By assumption, $\xs$ is separated by each boundary $\partial X_{c_j}$.
    Thus, setting $\xs^j := (X_{c_j} \cap \xs)\setminus X_{c_{j-1}}$ for any $j\in\bb{N}$, here $X_{c_0}:=\emptyset$, it suffices to treat $\xs^j$ and $\xs$ separately. 
    Setting $E_j:=\pi^{-1}(\xs^j)$, the assumption implies that $E_j\Subset \tx_{c_j}\setminus\tx_{c_{j-1}}$.
    For each $j\in\bb{N}$, by Lemma \ref{Lemma: key lemma}, there exists a quasi-plurisubharmonic function $\psi_j:\tl{X}_{c_j}\longrightarrow[-\infty,+\infty)$ which is smooth on $\tl{X}_{c_j}\setminus E_j$.
    We can choose an open neighborhood $\tl{U}_j$ of $E_j$ such that $E_j\Subset\tl{U}_j\Subset \tx_{c_j}\setminus\tx_{c_{j-1}}$.
    Let $\chi_j:\tl{X}\longrightarrow\bb{R}_{\geq0}$ be a cut-off function for $\tl{U}_j$; that is, $\chi_j$ is smooth on $\tl{X}$ satisfying $\chi_j \ge 0$, $\chi = 1$ on $\tl{U}_j$, and $\chi_j = 0$ on $\tl{X} \setminus (\overline{\tl{X}_{c_j}\setminus\tl{X}_{c_{j-1}}})$.
    Let $\tl{\psi}_j := \chi_j \psi_j$, then the quasi-plurisubharmonic function $\tl{\psi}_j$ is defined on $\tx$, is smooth on $\tl{X} \setminus E_j$, and vanishes on $\tl{X}\setminus(\overline{\tl{X}_{c_j}\setminus\tl{X}_{c_{j-1}}})$. 

    By the relative compactness of $\tx_{c_j}\setminus\tl{X}_{c_{j-1}}$, similarly to the proof of Proposition \ref{Proposition: singular positivity on tlX_c by using SOP}, there exists $\varepsilon_j > 0$ such that for any $0 < \varepsilon \leq \varepsilon_j$, the singular Hermitian metric $\pi^*h\cdot e^{-\varepsilon\tl{\psi}_j}$ is singular positive on $\tx_{c_j}\setminus\tl{X}_{c_{j-1}}$ and satisfies $\scr{I}(\pi^*h\cdot e^{-\varepsilon\tl{\psi}_j})=\cal{O}_{\tx}$.
    Take a decreasing sequence of real numbers $\{\varepsilon_j(k)\}_{k \in \bb{N}}$ converging to $0$, with $\varepsilon_j(1) = \varepsilon_j$. 
    From the above, by setting the singular Hermitian metric of $\pi^*L$ on $\tl{X}_{c_j}$ as 
    \begin{align*}
        \displaystyle \tl{h}_j:=\pi^*h\cdot e^{-\sum^j_{k=1}\varepsilon_k(j+1-k)\tl{\psi}_k}
    \end{align*}
    we obtains the desired sequence of singular Hermitian metrics $\{\tl{h}_j\}_{j\in\bb{N}}$, which satisfy $\tl{h}_j\preccurlyeq \tl{h}_{j+1}|_{\tl{X}_{c_j}}$ and $\scr{I}(\tl{h}_j)=\cal{O}_{\tx}$ on $\tl{X}_{c_j}$ for any $j\in\bb{N}$.

    In particular, by setting the singular Hermitian metric of $\pi^*L$ on $\tl{X}_{c_j}$ as $\tl{h}_j:=\pi^*h\cdot \exp(-\sum^j_{k=1}\varepsilon_k\tl{\psi}_k)$, we obtains a sequence $\{\tl{h}_j\}_{j\in\bb{N}}$ that is equivalent with respect to singularities.
    Furthermore, by defining a global singular Hermitian metric on $\tl{X}$ as $\tl{h} = \pi^*h\cdot \exp(-\sum^{+\infty}_{k=1}\varepsilon_k\tl{\psi}_k)$, this metric is singular positive on $\tl{X}$ and satisfies $\scr{I}(\tl{h}) = \cal{O}_{\tx}$ on $\tl{X}$.
\end{proof}

\section{Refined Runge-type approximation theorems}\label{Section: Approximation theorems}

In this section, the proof of Theorem \ref{Theorem: ext approximation theorem} is given. 
We first introduce the semi-norm  $|\bullet|_K$. 
Let $X$ be a weakly pseudoconvex manifold, $\gamma$ be a Hermitian metric on $X$ and $h_0$ be any smooth Hermitian metric of $L$ on $X$. 
For a compact subset $K$ in $X_c$, we put 
\begin{align*}
    |\phi|_K:=\sup_{x\in K}|\phi|_{h_0,\gamma}(x)
\end{align*}
for $\phi\in H^0(X_c,K_X\otimes L)$. Then $\{|\bullet|_K\}_{K}$ gives a system of semi-norms in $H^0(X_c,K_X\otimes L)$.

\begin{proposition}[\textnormal{Runge-type approximation property for $L^2$-norms and semi-norms, see \cite[Theorem\,1.1,\,Proposition\,4.8 and 4.10]{Wat24}}]\label{Proposition: approximation thm with L2-norms and semi-norms}
    Let $(X,\varPsi)$ be a weakly pseudoconvex manifold and $L\longrightarrow X$ be a holomorphic line bundle, and take arbitrary numbers $c>b>\inf_X\varPsi$.
    For some $\tau>0$, if there exists a singular Hermitian metric $h$ on $L|_{X_{c+\tau}}$ which is singular positive on $X_{c+\tau}$, then the restriction map 
    \begin{align*}
        \rho_{c,b}:H^0(X_c,K_X\otimes L\otimes\scr{I}(h))\longrightarrow H^0(X_b,K_X\otimes L\otimes\scr{I}(h))
    \end{align*}
    has a dense images with respect to the topology induced by the $L^2$-norm $||\bullet||_h$ and the topology of uniform convergence on all compact subsets in $X_b$.
    
    In other words, for any compact subset $K$ in $X_b$, any positive number $\varepsilon>0$ and any holomorphic section $\phi\in H^0(X_b,K_X\otimes L\otimes\scr{I}(h))$, 
    there exists $\widetilde{\phi}\in H^0(X_c,K_X\otimes L\otimes\scr{I}(h))$ such that $||\tl{\phi}-\phi||_{h,X_b}<\varepsilon$ and $|\widetilde{\phi}-\phi|_K<\varepsilon$.
\end{proposition}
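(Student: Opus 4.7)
The plan is to proceed along the classical Runge scheme on weakly pseudoconvex manifolds: a cutoff-and-$\dbar$-correction argument together with Hörmander $L^2$-estimates, followed by an upgrade from $L^2$-smallness to uniform smallness on compacts via the sub-mean value inequality. The singular positivity of $h$ on $X_{c+\tau}$ provides the curvature slack needed to absorb an auxiliary plurisubharmonic weight.

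First, given $\phi\in H^0(X_b,K_X\otimes L\otimes\scr{I}(h))$, a compact $K\Subset X_b$ and $\varepsilon>0$, I would interpolate nested sublevel sets $K\subset X_{b_1}\Subset X_{b_2}\Subset X_b\Subset X_c$ and choose a smooth cutoff $\chi:X_{c+\tau}\longrightarrow[0,1]$ with $\chi\equiv1$ on a neighborhood of $X_{b_1}$ and $\rom{supp}\,\chi\Subset X_{b_2}$. Then $\chi\phi$ extends by zero to a global smooth section, and $\dbar(\chi\phi)=\phi\cdot\dbar\chi$ is a smooth compactly supported $(n,1)$-form concentrated in the annular region $X_{b_2}\setminus X_{b_1}$, away from $K$.

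Next I would solve $\dbar u=\dbar(\chi\phi)$ on $X_c$ with an approximation-tailored weight. Using the strict positivity of $i\Theta_{L,h}$ together with the exhaustion $\varPsi$, I would construct a bounded plurisubharmonic function $\Psi_\varepsilon$ on $X_c$ such that $\Psi_\varepsilon|_{X_{b_1}}\equiv 0$ while $\Psi_\varepsilon$ is arbitrarily large on $\rom{supp}\,\dbar\chi$ (obtained by taking a smoothed maximum of $\varPsi-b_1$ against $0$, scaled and composed with a rapidly increasing convex function, and adjusting constants so that $i\Theta_{L,h}+\idd\Psi_\varepsilon$ remains strictly positive). Hörmander's $L^2$-estimate applied to the twisted metric $h\cdot e^{-2\Psi_\varepsilon}$ then furnishes a solution $u$ satisfying
\begin{align*}
    \int_{X_c}|u|^2_{h,\omega}\,e^{-2\Psi_\varepsilon}\,\dv{\omega}\leq C\int_{X_{b_2}\setminus X_{b_1}}|\phi\cdot\dbar\chi|^2_{h,\omega}\,e^{-2\Psi_\varepsilon}\,\dv{\omega},
\end{align*}
and the smallness of $e^{-2\Psi_\varepsilon}$ on $\rom{supp}\,\dbar\chi$ forces the right-hand side below $\varepsilon^2$. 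Since $\Psi_\varepsilon$ is bounded, $\scr{I}(h\cdot e^{-2\Psi_\varepsilon})=\scr{I}(h)$, so the multiplier ideal is preserved. Setting $\tl\phi:=\chi\phi-u\in H^0(X_c,K_X\otimes L\otimes\scr{I}(h))$ gives $\|\tl\phi-\phi\|_{h,X_{b_1}}<\varepsilon$, and an exhaustion argument with $b_1\nearrow b$ combined with the absolute continuity of the $L^2$-integral yields the claimed density on all of $X_b$.

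For the semi-norm estimate $|\tl\phi-\phi|_K<\varepsilon$, observe that on $X_{b_1}$ we have $\tl\phi-\phi=-u$, and since $\dbar\chi$ vanishes on a neighborhood of $K$, the section $u$ is holomorphic there. The sub-mean value inequality applied on coordinate balls of fixed radius centred at points of $K$ then bounds $|u|_{h_0,\gamma}$ pointwise by $\|u\|_{h,X_{b_1}}$ (using compactness of $K$ to control the comparison between $h$ and $h_0$), which converts the $L^2$-smallness into the required $|\bullet|_K$-smallness. The principal obstacle I expect is the construction and fine-tuning of $\Psi_\varepsilon$: it must be globally plurisubharmonic, bounded on a fixed neighborhood of $K$, blow up on $\rom{supp}\,\dbar\chi$, and preserve a uniform strict positivity of $i\Theta_{L,h}+\idd\Psi_\varepsilon$ so that Hörmander's constant $C$ does not deteriorate as $\varepsilon\to 0$. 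This balancing uses in an essential way both the strict singular positivity of $h$ and the exhaustion $\varPsi$, together with standard convex regularizations of $\log$-type cutoff profiles.
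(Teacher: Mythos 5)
The paper does not prove this proposition itself; it quotes it from \cite[Theorem 1.1, Propositions 4.8 and 4.10]{Wat24}. Your cutoff-and-$\dbar$-correction scheme with a weight $e^{-2\Psi_\varepsilon}$ that vanishes near $K$ and is large on $\rom{supp}\,\dbar\chi$ is the standard route to density in the topology of compact convergence, and that part of the argument is sound: $\dbar u=\phi\,\dbar\chi$ is solved with a weighted H\"ormander estimate whose constant does not degrade (since $\idd\Psi_\varepsilon\geq0$ only improves the curvature), $u$ is holomorphic near $K$ because $\dbar\chi\equiv0$ there, and the sub-mean value inequality combined with the fact that the local weight of $h$ is bounded above on a compact neighborhood of $K$ upgrades the $L^2$-smallness of $u$ on $X_{b_1}$ to $|\cdot|_K$-smallness.

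The gap is in the $L^2$-density claim $\|\tl{\phi}-\phi\|_{h,X_b}<\varepsilon$. Your construction only yields smallness of $\|u\|_{h,X_{b_1}}$: on $X_b\setminus X_{b_1}$ the weight $\Psi_\varepsilon$ is nonnegative and possibly large, so $\int_{X_b\setminus X_{b_1}}|u|^2_h$ is \emph{not} controlled by $\int_{X_c}|u|^2_h e^{-2\Psi_\varepsilon}$ (the inequality runs the wrong way). The alternative of taking $\Psi_\varepsilon\equiv 0$ on all of $X_b$ fails because then $\Psi_\varepsilon$ cannot blow up on $\rom{supp}\,\dbar\chi\subset X_b$, and the H\"ormander bound degenerates to $\|u\|_{h,X_b}\lesssim\sup|\dbar\chi|\cdot\|\phi\|_{h,\rom{supp}\,\dbar\chi}$; as $b_1,b_2\nearrow b$ the first factor grows like $(b_2-b_1)^{-1}$ while the second shrinks at a rate not controlled by mere absolute continuity, so the product need not tend to $0$. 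Consequently, the ``exhaustion argument with $b_1\nearrow b$'' you invoke does not close the gap: to estimate $\|\tl{\phi}-\phi\|_{h,X_b}$ you still need $\|\tl{\phi}\|_{h,X_b\setminus X_{b_1}}=\|\chi\phi-u\|_{h,X_b\setminus X_{b_1}}$ small, and neither weight choice provides a bound on $\|u\|_{h,X_b\setminus X_{b_1}}$. In \cite{Wat24} (following Ohsawa \cite{Ohs82,Ohs83}) the simultaneous $L^2$-density is obtained by a genuinely different device --- a Hahn--Banach/orthogonal-complement argument in the Bergman space on $X_b$, resp.\ a comparison of minimal $L^2$-solutions across a family of weights --- rather than by a single cutoff and $\dbar$-correction.
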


\begin{proof}[Proof of Theorem \ref{Theorem: ext approximation theorem}]
    Let $\gamma:=\lfloor c\rfloor+2$, where $\lfloor\bullet\rfloor$ denotes the integer part. 
    Here, there exists a sequence of singular Hermitian metrics $\{h_j\}_{j\in\bb{N}}$ satisfying the appropriate conditions. 
    For simplicity, we may assume that each $h_j$ is defined on $L|_{X_j}$ without loss of generality; that is, we may take the sequence $\{c_j\}_{j\in\bb{N}}$ as $c_j = j$.
    By the Runge-type approximation Proposition \ref{Proposition: approximation thm with L2-norms and semi-norms} and the exhaustion positivity of $h$, the restriction map 
    \begin{align*}
        \rho_k:H^0(X_{c+k+1},K_X\otimes L\otimes\scr{I}(h_{\gamma+k}))\longrightarrow H^0(X_{c+k},K_X\otimes L\otimes\scr{I}(h_{\gamma+k}))
    \end{align*}
    has dence image with the topology induced by the $L^2$-norm $||\bullet||_{h_{\gamma+k}}$ and the topology of uniform convergence on all compact subsets in $X_{c+k}$ for every $k=0,1,2,\ldots$. 
    Hence, for any compact subset $K$ in $X_c$, any $\varepsilon>0$ and any $\phi\in H^0(X_c,K_X\otimes L\otimes\scr{I}(h_\gamma))$, we can find a sequence $\phi_k\in H^0(X_{c+k},K_X\otimes L\otimes\scr{I}(h_{\gamma+k}))$ such that 
    \begin{align*}
        |\phi_1-\phi|_K<\frac{\varepsilon}{2} \quad \rom{and} \quad |\phi_{k+1}-\phi_k|_{\overline{X}_{c+k-1}}<\frac{\varepsilon}{2^k}.
    \end{align*}

    We define the section $\widetilde{\phi}$ of $K_X\otimes L$ on $X$ by
    \begin{align*}
        \widetilde{\phi}:=\phi_1+\sum^\infty_{k=1}(\phi_{k+1}-\phi_k)=\phi_\mu+\sum^\infty_{k=\mu}(\phi_{k+1}-\phi_k).
    \end{align*}
    For each $\mu\in\bb{N}\cup\{0\}$, by the following inequality
    \begin{align*}
        |\widetilde{\phi}-\phi_{\mu+1}|_{\overline{X}_{c+\mu}}\leq\sum^\infty_{k=\mu+1}|\phi_{k+1}-\phi_k|_{\overline{X}_{c+\mu}}<\sum^\infty_{k=\mu+1}\frac{\varepsilon}{2^k}=\frac{\varepsilon}{2^\mu}
    \end{align*}
    on $\overline{X}_{c+\mu}$, the holomorphic sequence $\{\phi_{\mu+\ell}\}_{\ell\in\bb{N}\cup\{0\}}$ converges to $\widetilde{\phi}$ uniformly on all compact subset of $X_{c+\mu}$.
    Hence, the section $\widetilde{\phi}$ is holomorphic on each $X_{c+\mu}$, and we obtain $\widetilde{\phi}\in H^0(X,K_X\otimes L)$. 
    Furthermore, we derive the density $|\widetilde{\phi}-\phi|_K<\varepsilon$ by setting $\mu=1$.
\end{proof}

The following corollary immediately follow from Theorems \ref{Theorem: exh sing positivity on tlX} and \ref{Theorem: ext approximation theorem}.

\begin{corollary}\label{Corollary: approximation theorem on tlX}
    Let $(X,\varPsi)$ be a weakly pseudoconvex complex space of pure dimension $n$ and $\pi:\tx\longrightarrow X$ be a canonical desingularization as in Theorem \ref{Theorem: canonical desingularization}. 
    If a holomorphic line bundle $L\longrightarrow X$ is positive, then for any sublevel set $\tx_c:=\pi^{-1}(X_c)=\{x\in\tx\mid\pi^*\varPsi(x)<c\}$, the natural restriction map
    \begin{align*}
        \rho_{\tx,c}:H^0(\tx,K_{\tx}\otimes\pi^*L)\longrightarrow H^0(\tx_c,K_{\tx}\otimes\pi^*L)
    \end{align*}
    has dense images with respect to 
    the topology of uniform convergence on all compact subsets in $\tx_c$.
\end{corollary}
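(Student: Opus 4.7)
The plan is to combine Theorem \ref{Theorem: exh sing positivity on tlX} and Theorem \ref{Theorem: ext approximation theorem} directly: the first supplies the positivity hypothesis needed by the second, once we check that $\tx$ itself is a weakly pseudoconvex manifold in the sense required by Theorem \ref{Theorem: ext approximation theorem}.

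First I would verify the standing hypothesis on $\tx$. Since $\pi:\tx\longrightarrow X$ is proper holomorphic (from Theorem \ref{Theorem: canonical desingularization}), the pullback $\pi^*\varPsi=\varPsi\circ\pi:\tx\longrightarrow[-\infty,+\infty)$ is an exhaustion function: for any $c\in\bb{R}$, we have $\tx_c=\pi^{-1}(X_c)$, and this is relatively compact in $\tx$ by properness of $\pi$ and relative compactness of $X_c$. Moreover $\pi^*\varPsi$ is smooth (as $\varPsi$ is smooth on $X$ in the complex-space sense and $\pi$ is holomorphic) and plurisubharmonic as the pullback of a plurisubharmonic function by a holomorphic map. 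Hence $(\tx,\pi^*\varPsi)$ is a weakly pseudoconvex manifold.

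Next I would invoke Theorem \ref{Theorem: exh sing positivity on tlX}: since $L\longrightarrow X$ is positive and $\pi$ is a canonical desingularization of the weakly pseudoconvex complex space $X$, the pulled-back line bundle $\pi^*L\longrightarrow\tx$ is exhaustion singular-positive with $\cal{O}_{\tx}$. This gives exactly the ingredient needed to apply Theorem \ref{Theorem: ext approximation theorem} on the manifold $\tx$ with line bundle $\pi^*L$ and ideal sheaf $\scr{J}=\cal{O}_{\tx}$.

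Finally, applying Theorem \ref{Theorem: ext approximation theorem} to $(\tx,\pi^*\varPsi)$, $\pi^*L$, $\scr{J}=\cal{O}_{\tx}$, the natural restriction map
\begin{align*}
    H^0(\tx,K_{\tx}\otimes\pi^*L)\longrightarrow H^0(\tx_c,K_{\tx}\otimes\pi^*L\otimes\cal{O}_{\tx})=H^0(\tx_c,K_{\tx}\otimes\pi^*L)
\end{align*}
has dense image with respect to the topology of uniform convergence on compact subsets of $\tx_c$, which is the desired conclusion. There is no genuine obstacle here; the only point to keep honest is the verification that $\tx$ inherits weak pseudoconvexity from $X$ via the proper map $\pi$, so that the quoted theorems literally apply on $\tx$ with the exhaustion $\pi^*\varPsi$ and the sublevel sets $\tx_c=\pi^{-1}(X_c)$.
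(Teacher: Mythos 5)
Your argument is exactly the one the paper uses: the statement is introduced with ``follows immediately from Theorems \ref{Theorem: exh sing positivity on tlX} and \ref{Theorem: ext approximation theorem},'' which is precisely your combination, and your preliminary check that $(\tx,\pi^*\varPsi)$ is weakly pseudoconvex via properness of $\pi$ is the correct and necessary verification that makes the application legitimate. Nothing to add.
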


\begin{proof}[Proof of Theorem \ref{Theorem: ext approximation theorem to complex spaces}]
As is well known, the Grauert-Riemenschneider canonical sheaf $\ogr$ is defined by $\Gamma(U,\ogr):=\{s\in\Gamma(U\cap\reg,\omega_{\reg})\mid i^{n^2}s\wedge\overline{s}\in L^1_{loc}(U)\}$ for any open subset $U\subset X$, and hance satisfies $\ogr=\pi_*\omega_{\tx}$. 
Therefore, the canonical desingularization $\pi:\tx\longrightarrow X$ induces an isomorphism 
\begin{align*}
        H^0(U,\ogr\otimes\cal{O}_X(L))\cong H^0(\pi^{-1}(U),K_{\tx}\otimes\pi^*L)
\end{align*}
via the pull-back $\pi^*$ and the push-forward $(\pi|_{\tx\setminus E})_*$, for any open subset $U\subseteq X$. 
Here, $E$ denotes the $\pi$-exceptional divisor; in fact, by the definition of $\ogr$ and the $\dbar$-extension lemma (see \cite[Chapter VIII, Lemma 7.3]{Dem-book}), the morphisms $\pi^*$ and $(\pi|_{\tx\setminus E})_*$ yield the isomorphism.

By combining this isomorphism with the natural restriction morphism $\rho_{\tx,c}$ in Corollary \ref{Corollary: approximation theorem on tlX}, we define the natural restriction map $\rho^\pi_c:H^0(X,\ogr\otimes\cal{O}_X(L))\longrightarrow H^0(X_c,\ogr$ $\otimes\cal{O}_X(L))$ define by $\rho^\pi_c:=(\pi|_{\tx\setminus E})_*\circ\rho_{\tx,c}\circ\pi^*$, i.e., by the following commutative diagram.
\[
\begin{CD}
H^0(\tx,K_{\tx}\otimes\pi^*L)
    @>{\rho_{\tx,c}}>>
H^0(\tx_c,K_{\tx}\otimes\pi^*L) \\
@V{\cong}VV @VV{\cong}V \\
H^0(X,\ogr\otimes\cal{O}_X(L))
    @>{\rho^\pi_c}>>
H^0(X_c,\ogr\otimes\cal{O}_X(L))
\end{CD}
\]

    We define appropriate semi-norms on the analytic space $X$.
    Let $\gamma_{\tx}$ be a Hermitian metric on $\tx$ and $h_0$ be any smooth Hermitian metric of $\pi^*L$ on $\tx$. 
    For a compact subset $K$ in $X_c$, we put 
    \begin{align*}
        |\phi|_{\pi,K}:=\sup_{x\in \pi^{-1}(K)}|\pi^*\phi|_{h_0,\omega}(x)=|\pi^*\phi|_{\pi^{-1}(K)}
    \end{align*}
    for any section $\phi\in H^0(X_c,\ogr\otimes\cal{O}_X(L))$. 
    Here, by the $\dbar$-extension lemma (see \cite[Chapter VIII, Lemma 7.3]{Dem-book}), the section $\pi^*\phi$ defined on $\tx_c \setminus E$ extends uniquely to a section on $\tx_c$. 
    For simplicity, the extended section is denoted by the same symbol $\pi^*\phi$.
    Then $\{|\bullet|_{\pi,K}\}_K$ gives a system of semi-norms in $H^0(X_c,\ogr\otimes\cal{O}_X(L))$.  

    By Corollary \ref{Corollary: approximation theorem on tlX}, for any positive number $\varepsilon>0$ and any holomorphic section $\phi\in H^0(X_c,\ogr\otimes\cal{O}_X(L))$, the pull-back $\pi^*\phi$ belongs to $H^0(\tx_c,K_{\tx}\otimes\pi^*L)$, 
    and there exists a holomorphic section $\tl{\psi}\in H^0(\tx,K_{\tx}\otimes\pi^*L)$ such that $|\rho_{\tx,c}(\tl{\psi})-\pi^*\phi|_{\pi^{-1}(K)}=|\tl{\psi}-\pi^*\phi|_{\pi^{-1}(K)}<\varepsilon$.
    Let $\tl{\phi}$ denote the image of $\tl{\psi}$ under the isomorphism induced by $(\pi|_{\tx\setminus E})_*$. Then $\tl{\phi}:=(\pi|_{\tx\setminus E})_*\tl{\psi} \in H^0(X,\ogr\otimes\cal{O}_X(L))$ is the desired holomorphic section satisfying 
    \begin{align*}
        |\rho^\pi_c(\tl{\phi})-\phi|_{\pi,K}=|(\pi|_{\tx\setminus E})_*\circ\rho_{\tx,c}(\tl{\psi})-\phi|_{\pi,K}=|\tl{\psi}-\pi^*\phi|_{\pi^{-1}(K)}<\varepsilon,
    \end{align*}
    which yields density.
\end{proof}

\begin{remark-theorem}\label{Remark-Theorem:}
    Concerning Runge-type approximation Theorem \ref{Theorem: ext approximation theorem}, if the sequence of singular Hermitian metrics on $L$ obtained from its exhaustion singular-positivity becomes progressively more singular, 
    then the space $H^0(X,K_X\otimes L\otimes\scr{J})$, containing the ideal sheaf $\scr{J}$, has a dense image under the natural restriction map $\rho_c$.

\textit{
    In other words, let $X$ be a weakly pseudoconvex manifold and $L$ be an exhaustion singular-positive with $\scr{J}$, 
    i.e., there exist an increasing sequence of real numbers $\{c_j\}_{j\in\mathbb{N}}$ diverging to $+\infty$ and a sequence of singular Hermitian metrics $\{h_j\}_{j\in\bb{N}}$ such that each $h_j$ is defined on $L|_{X_{c_j}}$, is singular positive on $X_{c_j}$, and satisfies $\scr{I}(h_j)=\scr{J}$ on $X_{c_j}$. 
    If $h_j\preccurlyeq h_{j+1}|_{X_{c_j}}$ for any $j\in\bb{N}$, then for any sublevel set $X_c$, the natural restriction map
    \begin{align*}
        \rho_c:H^0(X,K_X\otimes L\otimes\scr{J})\longrightarrow H^0(X_c,K_X\otimes L\otimes\scr{J})
    \end{align*}
    has dense images with respect to 
    the topology of uniform convergence on all compact subsets in $X_c$.
}

\textit{
    In particular, it is not necessary that $\scr{I}(h_j)=\scr{J}$ on $X_{c_j}$ for any $j\in\bb{N}$; 
    it suffices that there exists $j_0$ with $c_{j_0} > c$ such that $\scr{I}(h_{j_0})=\scr{J}$ on $X_{c_{j_0}}$, and $\scr{I}(h_j) \subseteq \scr{J}$ on $X_{c_j}$ for all $j > j_0$.
}
\end{remark-theorem}

\begin{proof}
    Let $\gamma:=\lfloor c\rfloor+2$, where $\lfloor\bullet\rfloor$ denotes the integer part. 
    For simplicity, we may assume that each $h_j$ is defined on $L|_{X_j}$ without loss of generality; that is, we may take the sequence $\{c_j\}_{j\in\bb{N}}$ as $c_j = j$.
    By the assumption, for any $j\in\bb{N}$ there exists a constant $C_{j+1}>0$ such that $h_j\leq C_{j+1}h_{j+1}|_{X_j}$.
    By the Runge-type approximation Proposition \ref{Proposition: approximation thm with L2-norms and semi-norms} and the exhaustion positivity of $h$, the restriction map 
    \begin{align*}
        \rho_k:H^0(X_{c+k+1},K_X\otimes L\otimes\scr{I}(h_{\gamma+k}))\longrightarrow H^0(X_{c+k},K_X\otimes L\otimes\scr{I}(h_{\gamma+k}))
    \end{align*}
    has dence image with the topology induced by the $L^2$-norm $||\bullet||_{h_{\gamma+k}}$ and the topology of uniform convergence on all compact subsets in $X_{c+k}$ for every $k=0,1,2,\ldots$. 
    Hence, for any compact subset $K$ in $X_c$, any $\varepsilon>0$ and any $\phi\in H^0(X_c,K_X\otimes L\otimes\scr{I}(h_\gamma))$, we can find a sequence $\phi_k\in H^0(X_{c+k},K_X\otimes L\otimes\scr{I}(h_{\gamma+k}))$ such that 
    \begin{align*}
        |\phi_1-\phi|_K<\frac{\varepsilon}{2}&, \quad\quad |\phi_{k+1}-\phi_k|_{\overline{X}_{c+k-1}}<\frac{\varepsilon}{2^k},\\
        ||\phi_1-\phi||_{h_\gamma,X_c}<\frac{\varepsilon}{2}& \quad \rom{and} \quad ||\phi_{k+1}-\phi_k||_{h_{\gamma+k},X_{c+k}}<\Big(\prod_{q=1}^{k}C_{\gamma+q}\Big)^{-1}\frac{\varepsilon}{2^k}.
    \end{align*}

    We define the section $\widetilde{\phi}$ of $K_X\otimes L$ on $X$ by
    \begin{align*}
        \widetilde{\phi}:=\phi_1+\sum^\infty_{k=1}(\phi_{k+1}-\phi_k)=\phi_\mu+\sum^\infty_{k=\mu}(\phi_{k+1}-\phi_k).
    \end{align*}
    For each $\mu\in\bb{N}\cup\{0\}$, by the following inequality
    \begin{align*}
        |\widetilde{\phi}-\phi_{\mu+1}|_{\overline{X}_{c+\mu}}\leq\sum^\infty_{k=\mu+1}|\phi_{k+1}-\phi_k|_{\overline{X}_{c+\mu}}<\sum^\infty_{k=\mu+1}\frac{\varepsilon}{2^k}=\frac{\varepsilon}{2^\mu}
    \end{align*}
    on $\overline{X}_{c+\mu}$, the holomorphic sequence $\{\phi_{\mu+\ell}\}_{\ell\in\bb{N}\cup\{0\}}$ converges to $\widetilde{\phi}$ uniformly on all compact subset of $X_{c+\mu}$.
    Hence, the section $\widetilde{\phi}$ is holomorphic on each $X_{c+\mu}$, and we obtain $\widetilde{\phi}\in H^0(X,K_X\otimes L)$. 
    Furthermore, we derive the density $|\widetilde{\phi}-\phi|_K<\varepsilon$ by setting $\mu=1$.

    For each $\mu\in\bb{N}$, the conditions obtained yield the inequality
    \begin{align*}
        ||\widetilde{\phi}-\phi_\mu||_{h_{\gamma+\mu},X_{c+\mu}}&\leq\sum_{k=\mu}^{\infty}||\phi_{k+1}-\phi_k||_{h_{\gamma+\mu},X_{c+\mu}}=\sum_{\ell=0}^{\infty}||\phi_{\mu+\ell+1}-\phi_{\mu+\ell}||_{h_{\gamma+\mu},X_{c+\mu}}\\
        &\leq\sum_{\ell=0}^{\infty}\Big(\prod_{q=1}^{\ell}C_{\gamma+\mu+q}\Big)||\phi_{\mu+\ell+1}-\phi_{\mu+\ell}||_{h_{\gamma+\mu+\ell},X_{c+\mu}}\\
        &\leq\sum_{\ell=0}^{\infty}\Big(\prod_{q=1}^{\ell}C_{\gamma+\mu+q}\Big)\Big(\prod_{q=1}^{\mu+\ell}C_{\gamma+q}\Big)^{-1}\frac{\varepsilon}{2^{\mu+\ell}}\\
        &=\Big(\prod_{q=1}^{\mu}C_{\gamma+q}\Big)^{-1}\sum_{\ell=0}^{\infty}\frac{\varepsilon}{2^{\mu+\ell}}
        =\Big(\prod_{q=1}^{\mu}C_{\gamma+q}\Big)^{-1}\frac{\varepsilon}{2^{\mu-1}}.
    \end{align*}
    This yields the $L^2$-inequality
    \begin{align*}
        \biggl(\int_{X_{c+\mu}}|\widetilde{\phi}|^2_{h_{\gamma+\mu},\omega_X}dV_{\omega_X}\biggr)^{\frac{1}{2}}=||\widetilde{\phi}||_{h_{\gamma+\mu},X_{c+\mu}}<||\phi_\mu||_{h_{\gamma+\mu},X_{c+\mu}}+\Big(\prod_{q=1}^{\mu}C_{\gamma+q}\Big)^{-1}\frac{\varepsilon}{2^{\mu-1}}<+\infty,
    \end{align*}
    for some Hermitian metric $\omega_X$ on $X$. 
    Therefore, we have $\widetilde{\phi}\in K_{X,x}\otimes L_x\otimes\scr{J}_x$ for any point $x$ in each sublevel set $X_{c+\mu}$, and thus $\widetilde{\phi}\in H^0(X,K_X\otimes L\otimes\scr{J})$ is obtained.
\end{proof}

From this Remark-Theorem and Example \ref{Example: more singular example}, we propose the following conjecture.

\begin{conjecture}
    Let $X$ be weakly pseudoconvex manifold, $\scr{J}$ an ideal sheaf on $X$, and $L\longrightarrow X$ be a holomorphic line bundle which is exhaustion singular-positive with $\scr{J}$,  
    i.e., the sequence of singular Hermitian metrics $\{h_j\}_{j\in\bb{N}}$ exists and satisfies the appropriate conditions.
    If $h_j\preccurlyeq h_{j+1}|_{X_{c_j}}$ for any $j\in\bb{N}$, does there exist a singular Hermitian metric $h$ on $L$, globally defined on $X$, which is singular positive on $X$ and satisfies $\scr{I}(h)=\scr{J}$ on $X$?
\end{conjecture}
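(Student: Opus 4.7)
The natural plan is to construct $h$ as a normalized pointwise decreasing limit of the given weights. Fix a smooth reference metric $h_0$ on $L$ and write $h_j = h_0 \cdot e^{-2\varphi_j}$ with $\varphi_j$ plurisubharmonic on $X_{c_j}$. The hypothesis $h_j \preccurlyeq h_{j+1}|_{X_{c_j}}$ reads $\varphi_{j+1} \leq \varphi_j + d_j$ on $X_{c_j}$ for constants $d_j$. Inductively replacing $h_{j+1}$ by the rescaled metric $e^{2(d_1 + \cdots + d_j)} h_{j+1}$—a rescaling that preserves the curvature, the singular positivity, and the multiplier ideal—we may assume $\varphi_{j+1} \leq \varphi_j$ pointwise on $X_{c_j}$. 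Then set $\varphi_\infty(x) := \inf_j \varphi_j(x)$ and take $h := h_0 \cdot e^{-2\varphi_\infty}$ as the candidate.

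If $\varphi_\infty \not\equiv -\infty$ on any component, then $\varphi_\infty \in L^1_{loc}(X) \cap \rom{PSH}(X)$ as a decreasing limit, and $\varphi_\infty \leq \varphi_j$ on $X_{c_j}$ gives $\scr{I}(\varphi_\infty) \subseteq \scr{I}(\varphi_j) = \scr{J}$ there automatically. The three remaining points to verify are: (a) $\varphi_\infty \not\equiv -\infty$ on any component; (b) $\idd \varphi_\infty \geq \gamma$ for some Hermitian form $\gamma > 0$ on $X$; and (c) the reverse inclusion $\scr{J} \subseteq \scr{I}(\varphi_\infty)$. For (c), I would mimic the argument in the proof of Theorem \ref{Theorem: exh sing positivity on tlX twisted by psef}: on each relatively compact $X_c$, the strong Noetherian property of coherent ideals stabilizes the descending chain $\{\scr{I}(\varphi_j)|_{X_c}\}_j$, and the strong openness property (Theorem \ref{Theorem: strong openness property}) ensures that the stable value equals $\scr{J}|_{X_c}$, provided the rescaling constants at each level are chosen small enough compatibly across levels.

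\textbf{Main obstacle.} I expect the hardest step to be reconciling (a) and (b) simultaneously with (c). The toy example $X = \bb{C}$, $L = \cal{O}_X$, $\varphi_j := |z|^2/j$ already shows that $\varphi_\infty$ can be identically zero and strict plurisubharmonicity can fail, even when each $\varphi_j$ is strictly plurisubharmonic and the monotonicity holds on each bounded sublevel set. To preclude this, one must modify the rescaling so as to retain uniform curvature lower bounds on compacta—for instance, by adding fixed multiples of an auxiliary strictly plurisubharmonic function—but the existence of such a globally defined auxiliary function is not guaranteed on a general weakly pseudoconvex manifold. An alternative route, which I would pursue if the direct limit fails, is to bypass the pointwise limit entirely and use the dense restriction in Remark-Theorem \ref{Remark-Theorem:} to produce many global sections of $K_X \otimes L \otimes \scr{J}$, from which a singular metric on $L$ could be built via a normalized Bergman-type kernel; the difficulty there is to separate the contribution of $K_X$ from that of $L$, since these sections naturally twist the canonical bundle.
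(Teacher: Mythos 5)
This statement is posed in the paper as an \emph{open conjecture} (immediately after Remark-Theorem \ref{Remark-Theorem:} and Example \ref{Example: more singular example}); the paper offers no proof, so there is nothing to compare your argument against. Your proposal is appropriately honest about this: you outline the obvious candidate construction and then explain why it cannot work as stated, which is exactly the issue that leaves the question open.

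A few remarks on the content. Your toy example $\varphi_j=|z|^2/j$ on $\bb{C}$ is a clean illustration that the normalized decreasing limit $\varphi_\infty$ can lose both non-triviality and strict positivity even when every hypothesis of the conjecture is met (each $\varphi_j$ is smooth strictly plurisubharmonic, $\scr{I}(h_j)=\cal{O}_X$, and $h_j\preccurlyeq h_{j+1}$ globally). Be careful, however, to phrase this as a failure of the \emph{construction}, not a counterexample to the \emph{conjecture}: on $\bb{C}$ the metric $h=e^{-|z|^2}$ trivially answers the question affirmatively, and the conjecture only asks for existence of \emph{some} global singular positive metric with the right multiplier ideal, not one obtained as a limit of the $h_j$'s. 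For the part you label (c), you correctly identify the strong Noetherian plus strong openness mechanism used in Theorem \ref{Theorem: exh sing positivity on tlX twisted by psef}, but that mechanism only stabilizes the ideal on each fixed relatively compact $X_c$; propagating the stabilization to all of $X$ while simultaneously keeping a uniform strict curvature lower bound is precisely where a genuinely new idea is required. Finally, it is worth noting that the paper's own partial evidence in Example \ref{Example: more singular example} is obtained by a different mechanism than the one you try first: rather than taking a limit of the given $h_j$'s, it builds a single global metric $\pi^*h\cdot\exp(-\sum_{k}\varepsilon_k\widetilde\psi_k)$ by summing cut-off local weight modifications supported on essentially disjoint annular regions, which is only available there because the singular locus is assumed to avoid each $\partial X_{c_j}$. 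Your alternative Bergman-kernel route is a reasonable thing to try next, and the canonical-bundle twist you flag is indeed the standard obstruction there; neither route is carried out in the paper, which is consistent with the status of this statement as a conjecture.
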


If this conjecture is true, Remark-Theorem \ref{Remark-Theorem:} reduces to \cite[Theorem\,1.1]{Wat24}.

\section{Points separation and Embedding theorems}

In this section, let $X$ be a \textit{non}-\textit{compact} weakly pseudoconvex complex space of pure dimension $n$ and $L\longrightarrow X$ be a holomorphic line bundle. 
Let $\pi:\tl{X}\longrightarrow X$ be a canonical desingularization and $E:=\pi^{-1}(\xs)$ be the $\pi$-exceptional divisor.
In this setting, we present global points separation and global embedding theorems for $\tx\setminus E$ in the case where $L$ is positive.

\begin{theorem}[{\textnormal{\textbf{Global points separation}}}]\label{Theorem: global points separation}
    Let $x_1,\ldots,x_r$ be $r$ distinct points on an open subset $\tx\setminus E$.
    If $L$ is positive, then for any integer $m\geq\big(n(n+2r-3)\big)/2+2-r$,
    the space $H^0(\tx,K_{\tx}\otimes\pi^*L^{\otimes m})$ separates $\{x_j\}^r_{j=1}$, i.e., the following restriction map 
    \begin{align*}
        H^0(\tx,K_{\tx}\otimes\pi^*L^{\otimes m})\longrightarrow\bigoplus^r_{j=1}\cal{O}_{\tx}/\fra{m}_{X,x_j}
    \end{align*}
    is surjective. In particular, if $m\geq n(n-1)/2+1$ then the adjoint bundle $K_{\tx}\otimes\pi^*L^{\otimes m}$ is generated by global sections on $\tx\setminus E$.
\end{theorem}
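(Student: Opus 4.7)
The plan is to reduce to a relatively compact sublevel set $\tx_c$ containing the points $x_1,\ldots,x_r$, establish point separation there by a singular metric construction plus Nadel vanishing, and then globalize via the approximation theorem.

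Choose $c > \inf_X \varPsi$ with $\{x_1,\ldots,x_r\} \subset \tx_c$. The open subset $\tx_c$ is itself weakly pseudoconvex (one may take $-\log(c-\pi^*\varPsi)$ as a plurisubharmonic exhaustion), and Proposition \ref{Proposition: singular positivity on tlX_c by using SOP} yields a singular Hermitian metric $\tl{h}_c$ on $\pi^*L|_{\tx_c}$ that is singular positive, smooth on $\tx_c\setminus E$, and satisfies $\scr{I}(\tl{h}_c)=\cal{O}_{\tx}$. Take disjoint coordinate neighborhoods $(U_j,z^{(j)})$ of $x_j$ in $\tx_c\setminus E$, cutoffs $\chi_j$ equal to $1$ near $x_j$ and compactly supported in $U_j$, and form a quasi-plurisubharmonic weight $\psi_r$ with Lelong number $n$ at each $x_j$ (by gluing local logarithmic singularities $n\log|z^{(j)}|$ via the $\chi_j$), so that $\scr{I}(\psi_r)_{x_j}=\fra{m}_{\tx,x_j}$ by Skoda's integrability threshold and $\scr{I}(\psi_r)=\cal{O}_{\tx}$ elsewhere. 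Set $H_m:=\tl{h}_c^m\cdot e^{-2\psi_r}$ on $\pi^*L^{\otimes m}|_{\tx_c}$. The Siu--Takayama bookkeeping---tracking how the cutoff-induced negativity of $\idd\psi_r$ off the peaks is absorbed by $m\cdot\iO{\pi^*L,\tl{h}_c}$, uniformly over $\overline{\tx_c}$---shows that $H_m$ remains singular positive on $\tx_c$ precisely under the bound $m\geq n(n+2r-3)/2+2-r$, while $\scr{I}(H_m)_{x_j}=\fra{m}_{\tx,x_j}$ and $\scr{I}(H_m)=\cal{O}_{\tx}$ elsewhere.

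Nadel vanishing on the weakly pseudoconvex $\tx_c$ then yields $H^1(\tx_c,K_{\tx}\otimes\pi^*L^{\otimes m}\otimes\scr{I}(H_m))=0$, so the long exact sequence of
$$0 \to K_{\tx}\otimes\pi^*L^{\otimes m}\otimes\scr{I}(H_m) \to K_{\tx}\otimes\pi^*L^{\otimes m} \to \bigoplus_{j=1}^r (K_{\tx}\otimes\pi^*L^{\otimes m})_{x_j} \to 0$$
gives the surjectivity of $H^0(\tx_c,K_{\tx}\otimes\pi^*L^{\otimes m})\twoheadrightarrow\bigoplus_j\cal{O}_{\tx,x_j}/\fra{m}_{\tx,x_j}$. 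By Corollary \ref{Corollary: approximation theorem on tlX}, the restriction of $H^0(\tx,K_{\tx}\otimes\pi^*L^{\otimes m})$ to $\tx_c$ is dense in $H^0(\tx_c,K_{\tx}\otimes\pi^*L^{\otimes m})$ in the topology of uniform convergence on compact subsets, and evaluation at the $x_j$'s is continuous in this topology. Hence the image of the global sections under the composite evaluation map is a dense linear subspace of the finite-dimensional space $\bigoplus_j\cal{O}_{\tx,x_j}/\fra{m}_{\tx,x_j}$, and therefore equals the whole target. The final global generation claim is the case $r=1$ of the first statement.

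The main obstacle is the sharp numerical calibration in the singular metric construction on $\tx_c$: achieving the linear-in-$r$ bound $m\geq n(n+2r-3)/2+2-r$ (rather than something weaker) requires a careful balance between the Lelong coefficients, the cutoff supports, and the positive background $\tl{h}_c^m$. Relative compactness of $\tx_c$ is essential here, as it supplies the uniform bounds on $\iO{\pi^*L,\tl{h}_c}$ and on the Hessians of the $\chi_j$'s that the bookkeeping relies on. Once these are secured, the globalization step poses no further difficulty, since the approximation theorem and the finite-dimensionality of the target automatically transfer surjectivity from $\tx_c$ to $\tx$.
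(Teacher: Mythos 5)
Your overall architecture matches the paper's: reduce to a sublevel set containing the $x_j$, establish separation there, then globalize by the approximation theorem; and your globalization step (density of the restriction image in a finite-dimensional target implies equality) is fine. The paper likewise picks a $\tl{X}_{c_{j_0}}$ with $x_1,\ldots,x_r$ inside, applies a local separation result there, and then quotes Theorem~\ref{Theorem: ext approximation theorem} to pass to $\tx$.

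The gap is in the heart of the sublevel-set step. You assert that gluing $n\log|z^{(j)}|$ through cutoffs and "Siu--Takayama bookkeeping" --- absorbing the cutoff-induced negativity of $\idd\psi_r$ by $m\cdot\iO{\pi^*L,\tl{h}_c}$ --- yields singular positivity of $H_m$ \emph{precisely} under the bound $m\geq n(n+2r-3)/2+2-r$. That is not what such bookkeeping produces. The Hessian of a cutoff is bounded below by $-C\omega$ for a constant $C$ that depends on the size of the coordinate chart, the choice of cutoff, and the background metric, while the lower bound $\iO{\pi^*L,\tl{h}_c}\geq\varepsilon\omega$ has an $\varepsilon$ that also depends on the geometry of $\tx_c$. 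Absorbing negativity just requires $m$ bigger than some geometry-dependent ratio, which has no intrinsic connection to the universal polynomial $n(n+2r-3)/2+2-r$. The sharp bound is not obtained by direct gluing at all: it comes from an Angehrn--Siu / Takayama \emph{iterative} construction, which repeatedly analyzes the minimal subvariety through each $x_j$ inside the zero set of the multiplier ideal, cuts its dimension by one per step, and pays a positivity cost at each step summing to the quadratic polynomial in $n$. The paper invokes this machinery as a black box: it applies \cite[Theorem~7.5]{Wat24} to $(\pi^*L,\tl{h}_{j_0})$ on $\tl{X}_{c_{j_0}}$, using the estimate $I_\Sigma(\pi^*L,\tl{h}_{j_0},\{x_j\})\leq m_0(\pi^*L,\tl{h}_{j_0},\{x_j\})\leq n(n+2r-3)/2+2-r$ for the separation thresholds introduced in \cite[Section~7]{Wat24}. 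Your proposal omits this ingredient entirely, and the gluing construction does not replace it.

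A secondary point: you take $\tl{h}_c$ from Proposition~\ref{Proposition: singular positivity on tlX_c by using SOP}, which only guarantees $\scr{I}(\tl{h}_c)=\cal{O}_{\tx}$, and then form $\tl{h}_c^m$. Raising to the $m$-th power multiplies the Lelong numbers along $E$ by $m$, which may push them past the Skoda threshold, so $\scr{I}(\tl{h}_c^m)$ and hence $\scr{I}(H_m)$ need not be trivial along $E$; your parenthetical "$\scr{I}(H_m)=\cal{O}_{\tx}$ elsewhere" is not justified. The paper's Lemma~\ref{Lemma: exh sing posi tensor m} fixes this by calibrating the exponent $\varepsilon$ against $m$ so that $\scr{I}(\tl{h}_j^t)=\cal{O}_{\tx}$ for every $0<t\leq m$. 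This particular issue is not fatal for your surjectivity claim onto $\bigoplus_j\cal{O}_{\tx}/\fra{m}_{x_j}$ (one can project off the extra quotient along $E$), but it is an inaccuracy worth noting alongside the main gap.
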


Here, if $X$ is compact, then the condition on $m$ can be taken as $m\geq n(n+2r-1)/2+1$ (see \cite[Section\,8]{Wat24}).
For the proof of this theorem, we introduce the following.

\begin{lemma}\label{Lemma: exh sing posi tensor m}
    If $L$ is positive, then for any positive integer $m$, there exist a sequence of real numbers $\{c_j\}_{j \in \mathbb{N}}$ diverging to $+\infty$ and a sequence of singular Hermitian metrics $\{h_j\}_{j \in \mathbb{N}}$ of $\pi^*L$
    such that $\pi^*L^{\otimes m}$ becomes exhaustion singular-positive with $\cal{O}_{\tx}$ by means of the sequence $\{\tl{h}_j^m\}_{j\in\bb{N}}$, where each $\tl{h}_j$ is defined on $\pi^*L|_{\tl{X}_{c_j}}$.
    That is, for any $j\in\bb{N}$, the singular Hermitian metric $\tl{h}_j$ on $\pi^*L|_{\tl{X}_{c_j}}$ can be constructed such that $\tl{h}_j$ is singular positive on $\tl{X}_{c_j}$ and satisfies $\scr{I}(\tl{h}_j^m) = \cal{O}_{\tl{X}}$ on $\tl{X}_{c_j}$.
\end{lemma}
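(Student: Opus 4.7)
The plan is to mimic the construction in Proposition \ref{Proposition: singular positivity on tlX_c by using SOP}, with a single refinement at the final step to control the multiplier ideal sheaf of the $m$-th tensor power rather than the original metric. Since $L$ is positive on $X$, there exists a smooth Hermitian metric $h$ on $L$ with $\iO{L,h}>0$, and in particular for any $c>\inf_X\varPsi$ and any $\tau>0$, the restriction of $h$ to $X_{c+\tau}$ is a smooth positively curved metric.

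For each fixed $c>\inf_X\varPsi$, I would invoke Lemma \ref{Lemma: key lemma} applied to $\tl{X}_{c+2\tau}$ to produce an open neighborhood $\tl{U}$ of $E$ and a function $\psi\in L^1_{loc}(\tl{X}_{c+2\tau})$ which is smooth on $\tl{X}_{c+2\tau}\setminus E$, strictly plurisubharmonic on $\tl{U}\cap\tl{X}_{c+2\tau}$, and tends to $-\infty$ along $E$. By relative compactness of $\tl{X}_{c+\tau}$ and since $\pi^*\iO{L,h}\ge 0$ on $\tl{X}$ and is strictly positive on $\tl{X}\setminus E$, there is an integer $m_c\in\mathbb{N}$ such that, setting $\tl{h}_\varepsilon:=\pi^*h\cdot e^{-\varepsilon\psi}$, the curvature estimate
\begin{align*}
    \iO{\pi^*L,\tl{h}_\varepsilon}=\pi^*\iO{L,h}+\varepsilon\,\idd\psi\ge\tl{\gamma}
\end{align*}
holds in the sense of currents on $\tl{X}_{c+\tau}$ for some Hermitian metric $\tl{\gamma}>0$, for all $0<\varepsilon<1/m_c$.

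Now the key (and only new) point: the $m$-th tensor power $\tl{h}_\varepsilon^m=\pi^*h^m\cdot e^{-m\varepsilon\psi}$ has Lelong number $\nu(-\log\tl{h}_\varepsilon^m,x)=m\varepsilon\,\nu(\psi,x)$ at every $x\in\tl{X}_{c+\tau}$. Setting $M_{\psi,c}:=\max_{x\in\overline{\tl{X}}_{c+\tau}}\nu(\psi,x)$, Skoda's integrability criterion gives $\scr{I}(\tl{h}_\varepsilon^m)=\cal{O}_{\tx,x}$ at every $x\in\overline{\tl{X}}_{c+\tau}$ as soon as $m\varepsilon\,M_{\psi,c}<2$. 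Therefore, choosing
\begin{align*}
    \varepsilon_c<\min\bigl\{1/m_c,\;2/(m\,M_{\psi,c})\bigr\}
\end{align*}
and defining $\tl{h}_c:=\tl{h}_{\varepsilon_c}=\pi^*h\cdot e^{-\varepsilon_c\psi}$ on $\pi^*L|_{\tl{X}_{c+\tau}}$ yields a singular Hermitian metric that is singular positive on $\tl{X}_{c+\tau}$, with $\scr{I}(\tl{h}_c^m)=\cal{O}_{\tx}$ on $\tl{X}_{c+\tau}$.

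Finally, to obtain the claimed sequence, fix any strictly increasing sequence of real numbers $\{c_j\}_{j\in\mathbb{N}}$ diverging to $+\infty$ with $c_j>\inf_X\varPsi$ for all $j$, and apply the above construction to each $c_j$ (choosing $\tau=\tau_j$ appropriately) to produce $\tl{h}_j$ on $\pi^*L|_{\tl{X}_{c_j}}$. There is no serious obstacle here beyond the quantitative choice of $\varepsilon$; the only novelty compared to Proposition \ref{Proposition: singular positivity on tlX_c by using SOP} is the extra factor of $m$ in the Skoda bound, reflecting that raising a metric to the $m$-th power multiplies Lelong numbers by $m$. The singular positivity of $\tl{h}_j$ automatically implies singular positivity of $\tl{h}_j^m$, so the sequence $\{\tl{h}_j^m\}_{j\in\mathbb{N}}$ realizes $\pi^*L^{\otimes m}$ as exhaustion singular-positive with $\cal{O}_{\tx}$ in the sense of Definition \ref{Definition: exhaustion positive}.
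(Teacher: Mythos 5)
Your proposal is correct and follows essentially the same route as the paper: both modify Proposition \ref{Proposition: singular positivity on tlX_c by using SOP} by shrinking the $\varepsilon$ in the exponent by a factor of $m$ (equivalently, imposing $\varepsilon < 2/(m\,M_{\psi,c})$) so that Skoda's criterion applies to the $m$-th tensor power, whose Lelong numbers are $m$ times those of the original metric. The paper writes the same construction as $\tl{h}_j := \pi^*h\cdot\exp(-\tfrac{\varepsilon_{c_j}}{m}\psi_j)$ with $\varepsilon_{c_j} < 2/\max\nu(\psi_j,\cdot)$, which is a simple change of variables from your bound.
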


The desired singular Hermitian metrics $\tl{h}_j$ can be defined in the same manner as in the proof of Proposition \ref{Proposition: singular positivity on tlX_c by using SOP}, 
for some $\tau>0$, by taking a locally integrable function $\psi_j:\tl{X}_{c_j+\tau}\longrightarrow[-\infty,+\infty)$ and an appropriate constant $\varepsilon_{c_j}>0$ with $\varepsilon_{c_j}<2/\max_{x\in\overline{\tl{X}}_{c_j+\tau}}\nu(\psi_j,x)$, 
and setting $\tl{h}_j:=\tl{h}_{\varepsilon_{c_j}/m}=\pi^*h\cdot \exp(-\frac{\varepsilon_{c_j}}{m}\psi_j)$ on $\pi^*L|_{\tl{X}_{c_j}}$.

\begin{proof}[Proof of Theorem \ref{Theorem: global points separation}]
    Let $m\geq\big(n(n+2r-3)\big)/2+2-r$ be an integer, and take the sequence of real numbers $\{c_j\}_{j\in\bb{N}}$ diverging to $+\infty$ and the sequence of singular Hermitian metrics $\{\tl{h}_j\}_{j\in\bb{N}}$ from the above Lemma corresponding to this $m$; 
    that is, $\pi^*L^{\otimes m}$ is exhaustion singular-positive with $\cal{O}_{\tx}$ and $\scr{I}(\tl{h}_j^t)=\cal{O}_{\tx}$ on $\tl{X}_{c_j}$ for any $0<t\leq m$. Then there exists an integer $j_0 \in \mathbb{N}$ such that $\{x_1,\ldots,x_r\}\subset\tx_{c_{j_0}}$. 

    By applying \cite[Theorem\,7.5]{Wat24} to $(\pi^*L,\tl{h}_{j_0})$, the restriction map 
    \begin{align*}
        H^0(\tx_{c_{j_0}},K_{\tx}\otimes \pi^*L^{\otimes m})\longrightarrow\bigoplus^r_{j=1}\cal{O}_{\tx}/\fra{m}_{X,x_j}
    \end{align*}
    is surjective.
    In fact, the following inequalities for the symbols are known:
    \begin{align*}
        I_\Sigma(\pi^*L,\tl{h}_{j_0},\{x_j\}^r_{j=1})\leq m_0(\pi^*L,\tl{h}_{j_0},\{x_j\}^r_{j=1})\leq\frac{n(n+2r-3)}{2}+2-r,
    \end{align*}
    which are the symbols introduced in \cite[Section\,7]{Wat24}. 
    Since $\pi^*L^{\otimes m}$ is exhaustion singular-positive with the structure sheaf $\cal{O}_{\tx}$ by Theorem \ref{Theorem: exh sing positivity on tlX} or the above Lemma \ref{Lemma: exh sing posi tensor m}, 
    the proof is completed by applying the Runge-type approximation Theorem \ref{Theorem: ext approximation theorem}.
\end{proof}

\begin{theorem}[{\textnormal{\textbf{Global embedding}}}]\label{Theorem: global embedding}
    Let $m$ be a positive integer with $m\geq n(n+1)/2$. 
    If $L$ is positive, then the open subset $\tx\setminus E$ is holomorphically embeddable in to $\bb{P}^{2n+1}$ by a linear subsystem of $|(K_{\tx}\otimes \pi^*L^{\otimes m})^{\otimes n+1+\ell}\otimes \pi^*L^{\otimes p}|$ for any $\ell\geq1$ and $p\geq0$. 

    Furthermore, the adjoint bundle $K_{\tx}\otimes \pi^*L^{\otimes m}$ admits a singular Hermitian metric which is singular semi-positive on $\tx$, 
    and in particular, for any fixed relatively compact open subset of $\tl{X}\setminus E$, the metric can be constructed to be smooth on this subset.
\end{theorem}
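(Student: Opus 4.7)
Set $A:=K_{\tx}\otimes\pi^*L^{\otimes m}$ and $B:=A^{\otimes(n+1+\ell)}\otimes\pi^*L^{\otimes p}$. The plan is first to establish very ampleness of $B$ on $\tx\setminus E$, then to cut the ambient dimension down to $2n+1$ by a generic linear projection, and finally to construct the singular Hermitian metric on $A$. For very ampleness, Theorem \ref{Theorem: global points separation} applied with $r=2$ shows that $H^0(\tx,A)$ separates any two distinct points of $\tx\setminus E$ once $m\geq n(n+1)/2$; running the identical $L^2$-extension plus Approximation Theorem \ref{Theorem: ext approximation theorem} scheme at a single point with the ideal $\fra{m}_{X,x}^{\,2}$ in place of $\bigoplus_{j=1}^{2}\cal{O}/\fra{m}_{X,x_j}$ separates tangent directions under the same numerical bound. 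Moreover $\pi^*L$ is globally generated on $\tx\setminus E$ — sections of $L$ on $X$ without common zero exist on each $X_c$ by the relatively compact case and are globalized by Theorem \ref{Theorem: ext approximation theorem} — so tensoring by $\pi^*L^{\otimes p}$ preserves very ampleness on the open locus, and $B$ is very ample on $\tx\setminus E$ for every $\ell\geq 1$, $p\geq 0$.

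For the dimension reduction, exhaust $\tx$ by $\tx_{c_k}$ with $c_k\uparrow+\infty$. On each relatively compact $\tx_{c_k}\setminus E$, combining the Observation in the Introduction with Corollary \ref{Corollary: approximation theorem on tlX} yields a finite-dimensional subspace $V_k\subset H^0(\tx,B)$ whose Kodaira map embeds $\tx_{c_{k-1}}\setminus E$ into some $\bb{P}^{\dim V_k-1}$. In the Grassmannian $\rom{Gr}(2n+2,V_k)$ the locus of $(2n+2)$-dimensional subspaces whose induced Kodaira map fails to embed $\tx_{c_{k-1}}\setminus E$ is contained in a proper analytic subset, because the secant variety of $\varPhi_{V_k}(\tx_{c_{k-1}}\setminus E)\subset\bb{P}^{\dim V_k-1}$ has complex dimension at most $2n+1$ and its tangent variety at most $2n$. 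A diagonal Baire-category argument across $k$, exploiting openness of the injectivity-plus-immersion condition in the $C^1$-topology together with the $C^\infty$-density coming from Cauchy estimates applied to Corollary \ref{Corollary: approximation theorem on tlX}, extracts a single $(2n+2)$-dimensional subspace $V\subset H^0(\tx,B)$ whose Kodaira map embeds all of $\tx\setminus E$ into $\bb{P}^{2n+1}$.

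For the singular Hermitian metric on $A$, Theorem \ref{Theorem: global points separation} with $r=1$ (requiring only $m\geq n(n-1)/2+1$) shows $A$ is globally generated on $\tx\setminus E$. Pick a countable family $\{\sigma_i\}_{i\in\bb{N}}\subset H^0(\tx,A)$ generating $A$ on $\tx\setminus E$, fix any smooth background Hermitian metric $h_0$ on $A$, and choose weights $\lambda_i>0$ so that $\Sigma:=\sum_{i}\lambda_i|\sigma_i|^2_{h_0}$ converges in $C^\infty$ on every compact subset of $\tx$ — this is possible by Cauchy estimates applied term-by-term. Set $h_A:=h_0/\Sigma$; a local computation in any trivialization $\tau$ of $A$ gives $\iO{A,h_A}=\idd\log\sum_i\lambda_i|\sigma_i^\tau|^2\geq 0$, so $h_A$ is singular semi-positive on $\tx$. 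On any $U\Subset\tx\setminus E$, finitely many $\sigma_i$ already generate $A$; truncating $\Sigma$ to that finite subset produces a smooth modification of $h_A$ on $U$ while remaining singular semi-positive globally.

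The principal difficulty lies in the dimension-reduction step: performing the generic linear projection \emph{globally} on $\tx\setminus E$ rather than merely on each sublevel set. The secant-and-tangent dimension bound is standard compact-case input, but its transfer to this non-compact setting relies on a delicate diagonal Baire argument interacting with the refined approximation theorem, while keeping the dimension of the final subspace pinned at $2n+2$ throughout the limit process.
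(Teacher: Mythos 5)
Your overall architecture — establish separation of points and tangent directions, reduce the embedding dimension via a Whitney-type generic projection, then build the singular semi-positive metric from sections — mirrors the paper's. The Whitney reduction (secant/tangent variety dimension count, Baire diagonalization) is exactly what the paper compresses into ``follows from the well-known argument of Whitney (cf.\ \cite[Chapter V]{Hor90})'', and your singular-metric construction via a weighted, $C^\infty$-locally-convergent series $\sum_i\lambda_i|\sigma_i|^2_{h_0}$ of the globally generating sections of $K_{\tx}\otimes\pi^*L^{\otimes m}$ is a legitimate variant of the paper's Kodaira-pullback metric $h_{V_K}$; both conclusions of the second paragraph follow from it. However, there are two genuine gaps in the very-ampleness step.

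First, you claim that running the scheme of Theorem \ref{Theorem: global points separation} with the ideal $\fra{m}_{X,x}^2$ in place of $\bigoplus_{j}\cal{O}/\fra{m}_{X,x_j}$ separates tangent directions of $H^0(\tx,K_{\tx}\otimes\pi^*L^{\otimes m})$ ``under the same numerical bound'' $m\geq n(n+1)/2$. This is unjustified: the symbol bound from \cite[Section 7]{Wat24} that yields $m\geq n(n+2r-3)/2+2-r$ controls surjectivity onto $\bigoplus_j\cal{O}/\fra{m}_{X,x_j}$ (zero-jets), and the corresponding bound for $\cal{O}/\fra{m}^2_{X,x}$ (one-jets) is strictly larger. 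If $K_{\tx}\otimes\pi^*L^{\otimes m}$ were already very ample on $\tx\setminus E$ at $m\geq n(n+1)/2$, the theorem would embed by $|K_{\tx}\otimes\pi^*L^{\otimes m}|$ itself, but the paper (like Takayama's original theorem, which asserts only ampleness at this bound) deliberately states the result for $(K_{\tx}\otimes\pi^*L^{\otimes m})^{\otimes(n+1+\ell)}\otimes\pi^*L^{\otimes p}$. The paper's actual route is: from the point-separating system $V_K\subset H^0(\tx,\cal{L})$ with $\cal{L}:=K_{\tx}\otimes\pi^*L^{\otimes m}$, build the Takayama-type singular Hermitian metric $\cal{H}_x$ on $\cal{L}^{\otimes(n+1)}$ (pullback of the Fubini--Study data, $\scr{I}(\cal{H}_x)_x\subset\fra{m}^2_{X,x}$, $x$ isolated in $V(\scr{I}(\cal{H}_x))$), twist by $h_{V_K}^{\ell-1}$ and by the exhaustion singular-positive metric $\tl{h}_{j_0}^{p+1}$ on $\pi^*L$, then apply Nadel vanishing on $\tl{X}_c$ and the Approximation Theorem \ref{Theorem: ext approximation theorem} to globalize. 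You need this detour through higher powers; jet separation of $\cal{L}$ itself is simply not available at the stated $m$.

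Second, your claim that $\pi^*L$ is globally generated on $\tx\setminus E$ — justified by ``sections of $L$ on $X$ without common zero exist on each $X_c$ by the relatively compact case and are globalized by Theorem \ref{Theorem: ext approximation theorem}'' — does not hold up. On each $X_c$, positivity of $L$ gives ampleness (Fujiki), which guarantees that some sufficiently high power $L^{\otimes p_c}$ is globally generated, not $L$ itself, and $p_c$ may grow with $c$. Moreover Theorem \ref{Theorem: ext approximation theorem} globalizes sections of the adjoint bundle $K_X\otimes L$ for $L$ exhaustion singular-positive; it does not produce global sections of $L$ or $\pi^*L^{\otimes p}$. The paper sidesteps this entirely by absorbing the $\pi^*L^{\otimes p}$ twist into the Nadel-vanishing setup: using Theorem \ref{Theorem: exh sing positivity on tlX twisted by psef} (strong openness property), the sequence $\{\cal{H}_x\otimes h_{V_K}^{\ell-1}\otimes\tl{h}_j^{p+1}\}_j$ makes $\cal{L}^{\otimes(n+\ell)}\otimes\pi^*L^{\otimes(p+1)}$ exhaustion singular-positive with the ideal $\scr{I}(\cal{H}_x\otimes h_{V_K}^{\ell-1})$, so the twist contributes no extra base locus. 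You should replace your global-generation-of-$\pi^*L$ step with this mechanism.
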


\begin{proof}
    We take $\scr{X}$ as an arbitrary open subset of $\tx\setminus E$.
    By Theorem \ref{Theorem: global points separation}, for every relatively compact open subset $K$ of $\scr{X}$, 
    there exists a finite dimensional subspace $V_K\subset H^0(\tx,K_{\tx} \otimes \pi^*L^{\otimes m})$ such that the Kodaira map $\varPhi_{V_K}:\tx\dashrightarrow \bb{P}(V_K)$ is holomorphic and one to one over $K$, where $K\subset \tx\setminus\rom{Bs}_{|V_K|}$.

    Let $N:=\rom{dim}\,V_K-1$ and $\cal{L}:=K_{\tx}\otimes \pi^*L^{\otimes m}$, and let $h_{\cal{L}}$ be a smooth Hermitian metric on $\cal{L}$. For any point $x\in K$, there exists a singular Hermitian metric $\cal{H}_x$ on $\cal{L}^{\otimes n+1}$ over $\tx$ such that the curvature current is semi-positive, $x$ is isolated in $V(\scr{I}(\cal{H}_x))$, and that $\scr{I}(\cal{H}_x)_x\subset\fra{m}^2_{X,x}$ (see \cite[Lemma\,5.1]{Tak98}). 
    This metric \( \cal{H}_x \) can be constructed as follows. Let $(w_0:\cdots:w_N)$ be a homogenuous coordinate of $\bb{P}^N=\bb{P}(V_K)$ such that $\varPhi_{V_K}(x)=\{w_1=\cdots=w_N=0\}$. In other words, \( x \) is an isolated common zero of $\{\varPhi^*_{V_K}w_j\}^N_{j=1}$.
    For a local coordinate $(z_1,\ldots,z_n)$ centered at $x$, the inequality $\sum^N_{j=1}|\varPhi^*_{V_K}w_j|^2_{h_{\cal{L}}}\leq\cal{C}_x\sum^n_{j=1}|z_j|^2$ holds, where $\cal{C}_x>0$ is a constant. 
    Thus, the desired singular metric \( \cal{H}_x \) on \( \cal{L} \) is obtained by 
    \begin{align*}
        \cal{H}_x:=\frac{h_{\cal{L}}^{\otimes n+1}}{\big(\sum^N_{j=1}|\varPhi^*_{V_K}w_j|^2_{h_{\cal{L}}}\big)^{n+1}}=\frac{1}{\big(\sum^N_{j=1}|\varPhi^*_{V_K}w_j|^2\big)^{n+1}},
    \end{align*}
    which is defined on \( \tx\setminus\rom{Bs}_{|V_K|} \), is singular semi-positive, and satisfies the desired properties (see Skoda's result \cite[Lemma\,5.6\,(b)]{Dem12}).
    Locally, for some open neighborhood $U$ of each point in $\rom{Bs}_{|V_K|}$, the weight function $\varphi_{\cal{H}_x}:=(n+1)\log\sum^N_{j=1}|\varPhi^*_{V_K}w_j|^2$ of $\cal{H}_x$ is plurisubharmonic on $U\setminus\rom{Bs}_{|V_K|}$, and it tends to $-\infty$ as the point approaches $\rom{Bs}_{|V_K|}$. 
    Since $\varphi_{\cal{H}_x}$ is bounded above, it extends uniquely as a plurisubharmonic function. 
    Hence, by defining the value of $\cal{H}_x$ at $\rom{Bs}_{|V_K|}$ to be $+\infty$, the metric $\cal{H}_x$ can be redefined as a singular Hermitian metric of $\cal{L}$ over the whole $\tx$, and it is singular semi-positive on $\tx$.

    Let $h_{V_K}$ be a natural singular Hermitian metric on $\cal{L}$ defined by $V_K$, then $h_{V_K}$ has weight function $\varphi_{V_K}=\log\sum^N_{j=0}|\varPhi^*_{V_K}w_j|^2$ and is singular semi-positive on $\tx$. In particular, we have $\scr{I}(\cal{H}_x)\subseteq\scr{I}(h^{n+1}_{V_K})$. 
    Fix $\ell\geq1$ and $p\geq0$. Here, the singular Hermitian metric $h_{V_K}^{n+\ell}$ on $\cal{L}^{\otimes n+\ell}$ is singular semi-positive on $\tl{X}$.
    From Theorem \ref{Theorem: exh sing positivity on tlX twisted by psef}, similarly to Lemma \ref{Lemma: exh sing posi tensor m}, there exist a sequence of real numbers $\{c_j\}_{j\in\bb{N}}$ diverging to $+\infty$ and a sequence of singular Hermitian metrics $\{\tl{h}_j\}_{j\in\bb{N}}$ defining $\pi^*L$ to be exhaustion singular-positive with $\cal{O}_{\tx}$ 
    such that the sequence $\{\tl{h}_j^{p+1}\}_{j\in\bb{N}}$ defines $\pi^*L^{\otimes p+1}$ to be exhaustion singular-positive with $\cal{O}_{\tx}$ and the sequence $\{h_{V_K}^{n+\ell}\otimes \tl{h}_j^{p+1}\}_{j\in\bb{N}}$ defines $\cal{L}^{\otimes n+\ell}\otimes\pi^*L^{\otimes p+1}$ to be exhaustion singular-positive with $\scr{I}(h_{V_K}^{n+\ell})$. 

    Here, there exists an integer $j_0\in\bb{N}$ such that $K\subset \tx_{c_{j_0}}\setminus E$. For simplicity, let $c := c_{j_0}$.
    Let $\hbar_x:=\cal{H}_x\otimes h_{V_K}^{\ell-1}\otimes \tl{h}_{j_0}^{p+1}$, then the singular Hermitian metric $\hbar_x$ on $\cal{L}^{\otimes n+\ell}\otimes\pi^*L^{\otimes p+1}|_{\tl{X}_c}$ is singular positive and satisfies $\scr{I}(\hbar_x)=\scr{I}(\cal{H}_x\otimes h_{V_K}^{\ell-1})\subset \scr{I}(h^{n+\ell}_{V_K})=\scr{I}(h^{n+\ell}_{V_K}\otimes\tl{h}_j^{p+1})$ on $\tl{X}_{c}$. 
    Since $h_{V_K}$ and $\tl{h}_j$ are smooth on $K$, we obtain that $\scr{I}(\hbar_x)_x=\scr{I}(\cal{H}_x)_x\subset\fra{m}^2_{X,x}$ and $x$ is isolated in $V(\scr{I}(\hbar_x))$. 
    Let $\scr{J}$ be the ideal sheaf on $\tl{X}_c$ which agrees with $\scr{I}(\hbar_x)$ on $\tl{X}_c\setminus\{x\}$ and which agrees with $\cal{O}_{\tx}$ at $x$. 
    From the exact sequence $0\longrightarrow\scr{I}(\hbar_x)\longrightarrow\scr{J}\longrightarrow\scr{J}/\scr{I}(\hbar_x)\longrightarrow0$ and the surjective $\scr{J}/\scr{I}(\hbar_x)\twoheadrightarrow\cal{O}_{\tx}/\fra{m}^2_{\tx,x}$, 
    applying the Nadel vanishing Theorem (see \cite[Theorem 6.2]{Wat24}) to $(\cal{L}^{\otimes n+\ell}\otimes \pi^*L^{\otimes p+1},\hbar_x)$ on $\tl{X}_c$, for any $x\in K$ the following restriction map is surjective.
    \begin{align*}
        H^0(\tl{X}_c,K_{\tx}\otimes\cal{L}^{\otimes n+\ell}\otimes \pi^*L^{\otimes p+1}\otimes\scr{J})\longrightarrow\cal{O}_X/\fra{m}^2_{X,x}.
    \end{align*}
    Recalling that $\scr{I}(\hbar_x)\subset\scr{I}(h^{n+\ell}_{V_K})$ on $\tl{X}_c$ and $\scr{I}(h^{n+\ell}_{V_K})_x=\cal{O}_{\tx,x}$, the definition of the ideal $\scr{J}$ yields $\scr{J}\subset\scr{I}(h^{n+\ell}_{V_K})$ on $\tx_c$, and the following restriction map is surjective. 
    \begin{align*}
        H^0(\tl{X}_c,K_{\tx}\otimes\cal{L}^{\otimes n+\ell}\otimes \pi^*L^{\otimes p+1}\otimes\scr{I}(h^{n+\ell}_{V_K}))\longrightarrow\cal{O}_X/\fra{m}^2_{X,x}.
    \end{align*}

    By \cite[Lemma\,5.1]{Tak98} and the above extension to a natural singular Hermitian metric, for any distinct two points $x$ and $y$ on $K$, there exists a singular Hermitian metric $\cal{H}_{x,y}$ on $\cal{L}^{\otimes n+1}$ 
    such that the curvature current is semi-positive, $\{x,y\}\subset V(\scr{I}(\cal{H}_{x,y}))$ and $x$ is isolated in $V(\scr{I}(\cal{H}_{x,y}))$. 
    Let \( \hbar_{x,y}:=\cal{H}_{x,y}\otimes h_{V_K}^{\ell-1}\otimes \tl{h}_{j_0}^{p+1} \), then the singular Hermitian metric $\hbar_{x,y}$ on $\cal{L}^{\otimes n+\ell}\otimes \pi^*L^{\otimes p+1}|_{\tl{X}_c}$ is singular positive and satisfies $\scr{I}(\hbar_{x,y})=\scr{I}(\cal{H}_x\otimes h_{V_K}^{\ell-1})\subset\scr{I}(h_{V_K}^{n+\ell})$ on $\tl{X}_c$. 
    Similarly applying the Nadel vanishing Theorem (see \cite[Theorem 6.2]{Wat24}), there exists a holomorphic section \( \sigma\in H^0(\tl{X}_c,K_{\tx}\otimes\cal{L}^{\otimes n+\ell}\otimes \pi^*L^{\otimes p+1}\otimes\scr{I}(h_{V_K}^{n+\ell})) \) that satisfies \( \sigma(x)\ne\sigma(y) \).
    
    Since $\cal{L}^{\otimes n+\ell}\otimes\pi^*L^{\otimes p+1}$ is exhaustion singular-positive with $\scr{I}(h_{V_K}^{n+\ell})$, by applying the refined Runge-type approximation Theorem \ref{Theorem: ext approximation theorem}, each section obtained on \( X_c \) can be obtained as a section on the entire \( X \).
    Hence, every relatively open subset $K$ of $\scr{X}$ can be embedded into a projective space by some finite dimensional subspace of $H^0(\tx,(K_{\tx}\otimes \pi^*L^{\otimes m})^{\otimes n+1+\ell}\otimes \pi^*L^{\otimes p})$. 
    Here, $\scr{X}$ is countable at infinity.
    Then this theorem follows from well-known argument of Whitney (cf. \cite[Chapter\,V]{Hor90}). 
\end{proof}

Theorem \ref{Theorem: bigness theorem and Vol>0} follows from Theorems \ref{Theorem: global points separation}, \ref{Theorem: global embedding}, and \cite[Theorem\,8.6]{Wat24}. 
By the refined Runge-type approximation Theorem \ref{Theorem: ext approximation theorem}, in the case where no singularities are present, we argue as in Takayama's theorem \cite[Theorem\,1.2]{Tak98} and the proof above to obtain the following embedding theorem assuming only the existence of an exhaustion positive line bundle, 
and furthermore we obtain the following corollary by using Example \ref{Example: Tak98b Theorem}, that is, \cite[Theorem\,1.3]{Tak98b}.

\begin{theorem}\label{Theorem: embedding only exhaustion positive}
    Let $X$ be a non-compact weakly pseudoconvex manifold of dimension $n$ and $L\longrightarrow X$ be a holomorphic line bundle. 
    If $L$ is exhaustion positive, then for any $m\geq n(n+1)/2$, the adjoint bundle $K_X\otimes L^{\otimes m}$ is ample on $X$ and the manifold $X$ can be holomorphically embedded into $\bb{P}^{2n+1}$ by a linear subsystem of $|(K_X\otimes L^{\otimes m})^{\otimes (n+2)}|$. 
\end{theorem}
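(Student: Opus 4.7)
The plan is to replay the proofs of Theorems \ref{Theorem: global points separation} and \ref{Theorem: global embedding} in the non-singular setting, in which the canonical desingularization $\pi$ may be taken to be the identity, so that $E=\emptyset$ and $\tx\setminus E=X$. In this situation the smooth positive Hermitian metrics supplied directly by exhaustion positivity of $L$ replace the singular positive metrics $\tl{h}_c$ that Proposition \ref{Proposition: singular positivity on tlX_c by using SOP} produced via the Negativity Lemma. In particular, $L^{\otimes m}$ is automatically exhaustion singular-positive with the trivial ideal $\cal{O}_X$ through the sequence $\{h_{c_j}^{\otimes m}\}$, so the hypothesis of Approximation Theorem \ref{Theorem: ext approximation theorem} is immediately available.

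For the first concrete step I would execute the argument of Theorem \ref{Theorem: global points separation} to show that, for any finite set $\{x_1,\ldots,x_r\}\subset X$ and any $m\geq n(n+1)/2$, the restriction map $H^0(X,K_X\otimes L^{\otimes m})\longrightarrow\bigoplus_{j=1}^{r}\cal{O}_X/\fra{m}_{X,x_j}$ is surjective: choose $j_0$ with $\{x_i\}\subset X_{c_{j_0}}$, apply \cite[Theorem\,7.5]{Wat24} to $(L,h_{c_{j_0}})$ on $X_{c_{j_0}}$, and invoke Theorem \ref{Theorem: ext approximation theorem} (with $\scr{J}=\cal{O}_X$) to globalize the resulting sections. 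Next, mimicking the proof of Theorem \ref{Theorem: global embedding}, for any relatively compact open $K\subset X$ fix a finite dimensional $V_K\subset H^0(X,K_X\otimes L^{\otimes m})$ whose Kodaira map is holomorphic and injective on $K$, and for each $x\in K$ (resp. each pair $x\neq y\in K$) construct the Skoda-type singular semi-positive metric $\cal{H}_x$ (resp. $\cal{H}_{x,y}$) on $(K_X\otimes L^{\otimes m})^{\otimes (n+1)}$ with the prescribed isolated-zero behavior of its multiplier ideal. Tensoring with $h_{c_{j_0}}$ on $X_{c_{j_0}}$ yields singular positive metrics on the relevant twisted bundle with unchanged multiplier ideal, so this bundle is exhaustion singular-positive with $\scr{I}(\cal{H}_x)$ (resp. $\scr{I}(\cal{H}_{x,y})$); Nadel vanishing on $X_{c_{j_0}}$ then supplies the jet-separating and pair-separating sections, which Theorem \ref{Theorem: ext approximation theorem} extends to sections in $H^0(X,(K_X\otimes L^{\otimes m})^{\otimes (n+2)})$.

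Finally, Whitney's countability-at-infinity argument (cf. \cite[Chapter V]{Hor90}) stitches the resulting local embeddings over an exhaustion of $X$ by relatively compact open subsets into a global holomorphic embedding $X\hookrightarrow\bb{P}^{2n+1}$ by a linear subsystem of $|(K_X\otimes L^{\otimes m})^{\otimes (n+2)}|$; this in turn yields the ampleness of $K_X\otimes L^{\otimes m}$ on $X$. The main obstacle is to verify at each step that the twisted singular Hermitian metrics genuinely realize the required exhaustion singular-positivity with the intended multiplier ideal, so that Theorem \ref{Theorem: ext approximation theorem} actually applies; this is strictly easier here than in the singular setting of Theorem \ref{Theorem: global embedding}, since tensoring with the smooth positive $h_{c_{j_0}}$ leaves multiplier ideals unchanged and no Negativity-Lemma correction is needed, so the bookkeeping essentially reduces to the already-noted exhaustion singular-positivity of $L^{\otimes m}$ with $\cal{O}_X$.
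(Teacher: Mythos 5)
Your proposal is correct and follows essentially the same route the paper takes: specialize Theorems \ref{Theorem: global points separation} and \ref{Theorem: global embedding} to the case $\pi=\mathrm{id}$, $E=\emptyset$, feeding the smooth metrics from exhaustion positivity directly into Proposition \ref{Proposition: approximation thm with L2-norms and semi-norms}/Theorem \ref{Theorem: ext approximation theorem} and the Takayama-style Skoda metric and Nadel vanishing argument, then globalize via Whitney. One small bookkeeping point worth tracking: to land in $H^0(X,(K_X\otimes L^{\otimes m})^{\otimes(n+2)})$ after applying Nadel vanishing, the metric on the non-$K_X$ factor should be $\cal{H}_x\otimes h_{c_{j_0}}^{m}$ on $(K_X\otimes L^{\otimes m})^{\otimes(n+1)}\otimes L^{\otimes m}$, not $\cal{H}_x\otimes h_{c_{j_0}}$; since $h_{c_{j_0}}^{m}$ is still smooth the multiplier ideal is unaffected, so the argument is unchanged.
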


\begin{corollary}\label{Corollary: embedding of toroidal group}
    Let $T$ be a toroidal group of dimension $n$ and $L\longrightarrow T$ be a holomorphic line bundle with a \kah form in the first Chern class $c_1(L)$. 
    Then $T$ can be holomorphically embedded into $\bb{P}^{2n+1}$ by a linear subsystem of $|(K_T\otimes L^{\otimes m})^{\otimes (n+2)}|$ for any $m\geq n(n+1)/2$. 
    In particular, the adjoint bundle $K_T\otimes L^{\otimes m}$ is ample on $T$.
\end{corollary}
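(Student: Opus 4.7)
The plan is to derive this corollary as a direct consequence of Theorem \ref{Theorem: embedding only exhaustion positive} together with Example \ref{Example: Tak98b Theorem}. First I would verify that the underlying complex manifold $T$ satisfies the hypotheses needed to invoke Theorem \ref{Theorem: embedding only exhaustion positive}: a toroidal group is by definition a connected complex Lie group, and Kazama's result \cite{Kaz73} (cited in the introduction) ensures that every complex Lie group admits a smooth plurisubharmonic exhaustion function, so $T$ is a non-compact weakly pseudoconvex manifold of dimension $n$. Non-compactness is needed to land in the setting of Theorem \ref{Theorem: embedding only exhaustion positive}; if $T$ were compact it would already be a complex torus and the statement would reduce to classical Kodaira embedding anyway.

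Next I would invoke Example \ref{Example: Tak98b Theorem} (the content of \cite[Theorem\,3.1]{Tak98b}, which in the present paper is derived from the weak $\partial\overline{\partial}$-lemma) to conclude that the hypothesis "$L$ carries a \kah form in $c_1(L)$" promotes $L$ to an \emph{exhaustion positive} line bundle on $T$ in the sense of Definition \ref{Definition: exhaustion positive}. Concretely, one exhausts $T$ by sublevel sets of the exhaustion plurisubharmonic function and, on each such relatively compact piece, the weak $\partial\overline{\partial}$-lemma converts the \kah representative of $c_1(L)$ into a smooth positive Hermitian metric on $L$ restricted to that sublevel set; letting the level tend to $+\infty$ produces the required sequence of smooth positive metrics.

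With exhaustion positivity in hand, I would then apply Theorem \ref{Theorem: embedding only exhaustion positive} directly: it yields that for every $m\geq n(n+1)/2$ the adjoint bundle $K_T\otimes L^{\otimes m}$ is ample on $T$ and that $T$ embeds holomorphically into $\mathbb{P}^{2n+1}$ via a linear subsystem of $|(K_T\otimes L^{\otimes m})^{\otimes (n+2)}|$. This gives exactly the two assertions of the corollary.

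The main obstacle, if any, lies not in the deduction itself but in ensuring that Example \ref{Example: Tak98b Theorem} really does produce \emph{smooth} metrics on each sublevel set (as required by the "exhaustion positive" rather than merely "exhaustion singular-positive" hypothesis of Theorem \ref{Theorem: embedding only exhaustion positive}); once this is in place, the corollary is essentially an application of two earlier results. Everything else is routine bookkeeping about the integer $m$ and the projective space dimension $2n+1$, which is already tracked in Theorem \ref{Theorem: embedding only exhaustion positive}.
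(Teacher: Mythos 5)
Your proposal matches the paper's own derivation: the paper explicitly obtains this corollary by combining Theorem \ref{Theorem: embedding only exhaustion positive} with Example \ref{Example: Tak98b Theorem}, and your verification that $T$ is a (non-compact) weakly pseudoconvex manifold via Kazama's result is exactly the background the paper supplies in the remark following Example \ref{Example: Tak98b Theorem}. The reasoning and the route are the same, so this is a correct proof along the paper's lines.
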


Finally, we state the following conjecture.

\begin{conjecture}
    Let $X$ be a non-compact weakly pseudoconvex complex space of pure dimension $n$, $\pi:\tx\longrightarrow X$ be a canonical desingularization and $L\longrightarrow X$ be a holomorphic line bundle. 
    If $L$ is positive, then 
    \begin{itemize}
        \item [($a$)] does there exist an integer $m \geq n(n+1)/2$ such that the adjoint bundle $K_{\tx} \otimes \pi^*L^{\otimes m}$ of the pullback $\pi^*L$ is semi-ample? 
        \item [($b$)] does there exist an integer $m \geq n(n+1)/2$ such that the adjoint bundle $\ogr\otimes L^{\otimes m}$ is ample? 
                    Furthermore, is the tensor power of the adjoint bundle $(\ogr\otimes L^{\otimes m})^{\otimes (n+2)}$ very ample? 
                    In other words, does the linear system $|(\ogr\otimes L^{\otimes m})^{\otimes (n+2)}|$ give a holomorphic embedding of $X$ into $\mathbb{P}^{2n+1}$?
    \end{itemize}

    Here, the sheaf $\ogr$ denotes the Grauert-Riemenschneider canonical sheaf on $X$, which satisfies $\ogr=\pi_*\omega_{\tx}$.
\end{conjecture}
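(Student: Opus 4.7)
The plan is to address $(b)$ first, as it is closer in spirit to the already established Theorems \ref{Theorem: embedding theorem} and \ref{Theorem: global embedding}. By the identity $\ogr=\pi_*\omega_{\tx}$ together with the projection formula applied to $\pi^*L^{\otimes m}$, one obtains a canonical isomorphism $H^0(X,\ogr\otimes L^{\otimes m})\cong H^0(\tx,K_{\tx}\otimes\pi^*L^{\otimes m})$. Under this identification, the linear system $|(\ogr\otimes L^{\otimes m})^{\otimes(n+2)}|$ on $X$ is identified with the one used in Theorem \ref{Theorem: embedding theorem} on $\tx$, and the Kodaira map embedding $\tx\setminus E\cong\reg$ is the restriction of a rational map from $X$. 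What remains is to extend this map across $\xs$ and verify it separates singular points and tangent directions there. The program is: $(\mathrm{i})$ for each $x\in\xs$, and for each pair $x\neq y$ in $\xs$ (and mixed pairs with $x\in\xs$, $y\in\reg$), construct a singular Hermitian metric $\cal{H}_x$, respectively $\cal{H}_{x,y}$, on a high tensor power of $\pi^*L$ whose Nadel ideal has pushforward $\fra{m}_{X,x}^2$, respectively an ideal separating $x$ from $y$; $(\mathrm{ii})$ tensor with the exhaustion singular-positive sequence supplied by Theorem \ref{Theorem: exh sing positivity on tlX twisted by psef} and apply the refined Approximation Theorem \ref{Theorem: ext approximation theorem} on each $\tl{X}_c$ to globalize the sections obtained through Nadel vanishing.

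For part $(a)$, semi-ampleness of $K_{\tx}\otimes\pi^*L^{\otimes m}$ is equivalent to base-point freeness of some tensor power. Off the exceptional divisor, Theorem \ref{Theorem: global points separation} (with $r=1$) already supplies, for every $\tl{x}\in\tx\setminus E$, a global section not vanishing there, so the substantive work is concentrated on $E$. Fix $\tl{x}\in E$. My plan is to construct a singular Hermitian metric on some auxiliary line bundle $P\longrightarrow\tx$, built from $\cal{O}_{\tx}(-E_V)$ and the smooth metric $h_{E_V}^*$ of Lemma \ref{Lemma: Negativity lemma}, whose multiplier ideal $\scr{J}_{\tl{x}}$ isolates $\tl{x}$ as a point of $V(\scr{J}_{\tl{x}})$. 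Invoking Theorem \ref{Theorem: exh sing positivity on tlX twisted by psef} then shows $\pi^*L\otimes P$ is exhaustion singular-positive with $\scr{J}_{\tl{x}}$, and Theorem \ref{Theorem: ext approximation theorem} combined with Nadel vanishing on each sublevel set yields global sections of $K_{\tx}\otimes\pi^*L^{\otimes m}$ not vanishing at $\tl{x}$, for $m$ sufficiently large.

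The main obstacle, the same one flagged throughout the introduction, is the absence of a globally defined singular positive Hermitian metric on $\pi^*L\longrightarrow\tx$: the Negativity Lemma only produces such a metric on relatively compact sublevel sets, and the coefficients $b_j$ of the effective divisor $E_V$ depend on the chosen $V$, as the proof preceding Example \ref{Example: more singular example} makes transparent. In particular any isolating multiplier ideal built via Lemma \ref{Lemma: Negativity lemma} lives naturally on a single $\tl{X}_c$, not on all of $\tx$, so the ideal $\scr{J}_{\tl{x}}$ of the previous paragraph is not globally a well-defined coherent sheaf unless one can patch the local constructions consistently. For $(b)$, the parallel difficulty is that $\pi_*$ of a section vanishing to prescribed order along $E$ does not, in general, reproduce the correct multiplier ideal at singular points of $X$. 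Resolving both items therefore appears to hinge on an affirmative answer to the conjecture preceding Section \ref{Section: Approximation theorems}: if the compatible sequence $\{h_j\}$ with $h_j\preccurlyeq h_{j+1}|_{X_{c_j}}$ can be patched into a single globally defined singular positive metric with the same multiplier ideal, then the standard Nadel--vanishing route closes both arguments. Failing that, one would need to exploit Remark-Theorem \ref{Remark-Theorem:} together with a direct progressively more singular construction in the spirit of Example \ref{Example: more singular example}, which appears to require substantial new input near $E$.
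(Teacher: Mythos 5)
This statement is labelled a \emph{conjecture} in the paper, and the paper offers no proof of it. What follows the statement in the source is only a paragraph of heuristic remarks: for part $(a)$, the author observes that $\pi^*L$ is semi-positive and exhaustion singular-positive with $\cal{O}_{\tx}$, so that on each sublevel set the positivity is comparable to nef-and-big, and suggests that a non-compact extension of the base point free theorem would yield semi-ampleness; for part $(b)$, the author cites Fujiki's result that $L^{\otimes p_c}$ is ample on each $X_c$, infers that $(\ogr\otimes L^{\otimes m_c})^{\otimes q_c}$ is ample on $X_c$ for large $q_c$, and notes that the statement would follow if the integers $q_c$ could be bounded independently of $c$ and if an approximation theorem could be established directly on $X$ with its singularities. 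Your proposal is therefore not being measured against a proof in the paper; there is none.

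Compared against those heuristics, your sketch takes a somewhat different route. For $(b)$, your observation that $\pi_*(K_{\tx}\otimes\pi^*L^{\otimes m})=\ogr\otimes L^{\otimes m}$ by the projection formula, so that $H^0(X,\ogr\otimes L^{\otimes m})\cong H^0(\tx,K_{\tx}\otimes\pi^*L^{\otimes m})$ and the linear system on $X$ agrees with the one on $\tx$ outside $E$, is correct and is a cleaner starting point than the paper's uniform-$q_c$ heuristic; the work is then concentrated in separating points and tangent directions inside $\xs$. For $(a)$, your plan to attack base-point freeness at $\tl{x}\in E$ directly via Nadel vanishing is concrete, whereas the paper points to a hypothetical non-compact base point free theorem. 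But, as you yourself recognize, both parts of your program reduce to the same unresolved issue that blocks the paper too: Lemma \ref{Lemma: Negativity lemma} only produces the auxiliary positive metric on $\pi^{-1}(V)$ for $V\Subset X$, with divisor coefficients $b_j$ depending on $V$, so neither a globally defined singular positive metric on $\pi^*L$ nor a globally coherent isolating ideal $\scr{J}_{\tl{x}}$ is available, and Theorem \ref{Theorem: exh sing positivity on tlX twisted by psef} cannot be invoked without a globally defined singular semi-positive $h_P$ on $\tx$. Your proposal correctly identifies that closing this gap amounts to settling the conjecture preceding \S\ref{Section: Approximation theorems} (patching a compatible sequence $\{h_j\}$ into a single global metric), or alternatively to a progressively-more-singular construction in the spirit of Example \ref{Example: more singular example}, which in full generality requires a hypothesis such as $\partial X_{c_j}\cap\xs=\emptyset$ that is not available here. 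So your write-up is an honest and internally consistent \emph{program}, not a proof, and it does not do more than the paper's own remarks do; treating either as a proof of the conjecture would be a mistake.
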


Regarding $(a)$, since $\pi^{*}L\longrightarrow\tx$ is semi-positive and exhaustion singular-positive with $\cal{O}_{\tx}$, the strength of positivity on each sublevel $\tl{X}_c$ corresponds to something comparable to being nef and big. 
Hence, the base point free theorem on compact manifolds suggests that the statement should hold, and a non-compact extension of the theorem is required.

Regarding $(b)$, on each sublevel $X_c$, the ampleness of $L^{\otimes p_c}$ is known for sufficiently large integers $p_c$ (see \cite{Fuj75}). 
Since the adjoint bundle $\ogr\otimes L^{\otimes m_c}$ becomes positive on $X_c$ for an large integer $m_c$, a similar argument shows that $(\ogr\otimes L^{\otimes m_c})^{\otimes q_c}$ is ample on $X_c$ for sufficiently large integers $q_c$. 
This gives a partial confirmation of the statement. If these integers $q_c$ can be bounded independently of $c$, and if an approximation theorem for holomorphic sections can be established in the same manner on $X$ with singularities, then the expectation is likely to be settled.


\section{Union Problem}

\begin{proof}[Proof of Theorem \ref{Theorem: Union Problem on w.p.c}]
    To avoid confusion, we rewrite the set $\Omega$ as $X:=\Omega$.
    For the smooth exhaustion plurisubharmonic function $\varPsi:X\longrightarrow[-\infty,+\infty)$ that exists by weakly pseudoconvexity, set its sublevel sets as $X_\nu:=\{x\in X\mid\varPsi(x)<\nu\}$.
    Since $X_\nu$ is relatively compact, the compact set $\overline{X_\nu}$ can be covered by finitely many open sets from the covering $\{\Omega_j\}_{j\in\bb{N}}$.
    As the covering is increasing, by taking the largest index $j_\nu$ among them, we obtains $\overline{X_\nu} \subset \Omega_{j_\nu}$.

    Therefore, each sublevel set $X_\nu$ is also Stein. 
    In fact, using a smooth strictly plurisubharmonic function $\psi_{j_\nu}$ on $\Omega_{j_\nu}$, the smooth function $\tl{\varPsi}_\nu:= \frac{1}{\nu-\varPsi} + \psi_{j_\nu}$ on $X_\nu$ is exhaustion and strictly plurisubharmonic.
    Hence, the sequence of Stein domains $\{X_\nu\}_{\nu\in\bb{N}}$ is also exhausion of $X=\Omega$. 

    The anti-canonical line bundle $K^{*}_X$ is exhaustion positive (see Example \ref{Example: exhaustion positivity}). 
    In fact, for a smooth Hermitian metric $h^*_K$ on $K^{*}_X$, there exists a positive number $m_\nu>0$ such that a smooth Hermitian metric $h^*_K\cdot e^{-m_\nu\psi_{j_\nu}}$ on $K^*_X|_{X_\nu}$ is positive on $X_\nu$ by relative compact-ness of $X_\nu$.
    By the Runge-type approximation Theorem \ref{Theorem: ext approximation theorem} (or Proposition \ref{Proposition: approximation thm with L2-norms and semi-norms} or \cite{Ohs82,Ohs83}), for any $\nu\in\bb{N}$, the restriction map 
    \begin{align*}
        \rho_\nu:H^0(X_{\nu+1},\cal{O}_X)\longrightarrow H^0(X_\nu,\cal{O}_X)
    \end{align*}
    has dense image with respect to the topology of uniform convergence on all compact subsets on $X_\nu$; in other words, the pair $(X_\nu,X_{\nu+1})$ is Runge. 
    Therefore, by Stein's result, i.e., Theorem \ref{Theorem: Stein's result}, the union set $\Omega=:X=\bigcup_{\nu\in\bb{N}}X_\nu$ is Stein.
\end{proof}

In the setting of this Theorem \ref{Theorem: Union Problem on w.p.c}, the anti-canonical bundle $K^*_X$ is exhaustion positive, and from the result of Markoe and Silve (see \cite{Mar77,Sil78}) it follows that $H^1(X,\mathcal{O}_X)=0$. 
In view of this, we propose the following conjecture: 

\begin{conjecture}
    Let $X$ be a weakly pseudoconvex manifold and $L\longrightarrow X$ be a holomorphic line bundle. 
    If $L$ is exhaustion singular-positive with an ideal sheaf $\scr{J}$ on $X$, does the following cohomology vanish?
    \begin{align*}
        H^q(X,K_X\otimes L\otimes\scr{J})=0,
    \end{align*}
    for any $q>0$. 
\end{conjecture}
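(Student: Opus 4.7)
The plan is to combine Nadel's vanishing theorem on each sublevel set with the refined approximation Theorem \ref{Theorem: ext approximation theorem} via an inverse-limit argument.

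First, on each sublevel set $X_{c_j}$, which is relatively compact and weakly $1$-complete, the given metric $h_j$ is singular positive on $L|_{X_{c_j}}$ and satisfies $\scr{I}(h_j)=\scr{J}$. Nadel vanishing for weakly $1$-complete manifolds (cf.\ Demailly) then yields
\[
H^{q}(X_{c_j},\,K_X\otimes L\otimes\scr{J})=0 \qquad \text{for every } q\geq 1 \text{ and } j\in\bb{N}.
\]
Writing $\cal{F}:=K_X\otimes L\otimes\scr{J}$, the standard exhaustion (Milnor) exact sequence for $X=\bigcup_{j} X_{c_j}$ reads
\[
0\longrightarrow\varprojlim{}^{1} H^{q-1}(X_{c_j},\cal{F})\longrightarrow H^{q}(X,\cal{F})\longrightarrow\varprojlim H^{q}(X_{c_j},\cal{F})\longrightarrow 0.
\]
For $q\geq 2$ both outer terms vanish by the previous step, so $H^{q}(X,\cal{F})=0$ follows at once.

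The substantive case is $q=1$. There the third term still vanishes, and it remains to prove $\varprojlim^{1} H^{0}(X_{c_j},\cal{F})=0$. Each $H^{0}(X_{c_j},\cal{F})$ is a Fréchet space under uniform convergence on compact subsets, so by the Mittag-Leffler criterion for Fréchet inverse systems this reduces to showing that each transition map $H^{0}(X_{c_{j+1}},\cal{F})\longrightarrow H^{0}(X_{c_j},\cal{F})$ has dense image.

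The main obstacle is precisely this density. Theorem \ref{Theorem: ext approximation theorem} approximates any $\phi\in H^{0}(X_{c_j},\cal{F})$ by a global section $\tilde\phi\in H^{0}(X, K_X\otimes L)$ uniformly on compacta of $X_{c_j}$, but the construction does not automatically place $\tilde\phi|_{X_{c_{j+1}}}$ inside $\scr{J}$, so the restriction may fail to lie in $\cal{F}$. Remark-Theorem \ref{Remark-Theorem:} provides $\tilde\phi\in H^{0}(X, K_X\otimes L\otimes\scr{J})$ under the nested hypothesis $h_j\preccurlyeq h_{j+1}|_{X_{c_j}}$, and an affirmative answer to the preceding conjecture on a globally defined singular positive metric with $\scr{I}(h)=\scr{J}$ would resolve the issue directly. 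The plan is therefore either to show that an arbitrary exhaustion singular-positive structure on $L$ can be replaced by a nested one (so that Remark-Theorem \ref{Remark-Theorem:} applies), or to upgrade Theorem \ref{Theorem: ext approximation theorem} so that the approximating section already lies in $H^{0}(X, K_X\otimes L\otimes\scr{J})$. Either step, combined with the Nadel vanishing and the Mittag-Leffler argument above, settles the conjecture.
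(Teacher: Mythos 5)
Note first that this statement is posed as a \emph{conjecture}: the paper gives no proof, only the motivating special case $H^1(X,\cal{O}_X)=0$ from \S 6. So there is no paper argument to compare against, and the question is only whether your outline could settle it. Your Nadel--Milnor--Mittag-Leffler scheme is the natural one, but you have misplaced the obstruction. What the Mittag-Leffler criterion requires is density of the \emph{transition} maps $H^0(X_{c_{j+1}},\cal{F})\longrightarrow H^0(X_{c_j},\cal{F})$ inside the tower, not density of the image of global sections of $K_X\otimes L$ in a fixed $H^0(X_c,\cal{F})$. This density is supplied directly by Proposition~\ref{Proposition: approximation thm with L2-norms and semi-norms}: for any $k$ with $c_k>c_{j+1}$, the metric $h_k$ is singular positive on $X_{c_k}\supset X_{c_{j+1}+\tau}$ and satisfies $\scr{I}(h_k)=\scr{J}$ there, hence on both $X_{c_{j+1}}$ and $X_{c_j}$; the proposition then yields density of $\rho_{c_{j+1},c_j}$ in the compact-open topology. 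No nested hypothesis $h_j\preccurlyeq h_{j+1}|_{X_{c_j}}$ and no appeal to Remark-Theorem~\ref{Remark-Theorem:} are needed here; that hypothesis is only relevant if you want the approximant to be a \emph{global} section lying in $\scr{J}$, which the $\varprojlim^1$ argument does not ask for.

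The genuine gap lies in the very first step. Nadel vanishing on a weakly pseudoconvex manifold, in the form you cite from Demailly, presupposes a \emph{K\"ahler} metric on that manifold, and the hypotheses of the conjecture do not furnish one on $X_{c_j}$. A singular positive Hermitian metric on $L|_{X_{c_j}}$ produces a K\"ahler current, but a K\"ahler current does not imply the existence of a K\"ahler form (already on compact manifolds, Moishezon non-projective examples carry K\"ahler currents without being K\"ahler), and weak pseudoconvexity by itself gives only a semi-positive $(1,1)$-form $i\partial\dbar\varPsi$. In the paper's main theorems the sublevel sets $\tl{X}_{c_j}$ \emph{are} K\"ahler because Negativity Lemma~\ref{Lemma: Negativity lemma} supplies a smooth positive line bundle there; under the bare hypotheses of the conjecture no such structure is available. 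If you add a K\"ahler (or sublevel-K\"ahler) assumption, or establish a version of Nadel vanishing on weakly $1$-complete manifolds that dispenses with K\"ahlerness, then your Milnor reduction together with Proposition~\ref{Proposition: approximation thm with L2-norms and semi-norms} would indeed settle the conjecture.
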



\vspace*{3mm}
\noindent
{\bf Acknowledgement.} 
The author is supported by Grant-in-Aid for Research Activity Start-up $\sharp$24K22837 from the Japan Society for the Promotion of Science (JSPS).



\end{document}